
\documentclass[a4paper,12pt]{article}
\usepackage[text={6.5in,8.4in},centering]{geometry}
\setlength{\topmargin}{-0.25in}

\usepackage{graphicx,url,subfig}
\RequirePackage[OT1]{fontenc}
\RequirePackage{amsthm,amsmath,amssymb,amscd}
\RequirePackage[numbers]{natbib}
\RequirePackage[colorlinks,citecolor=blue,urlcolor=blue]{hyperref}
\usepackage{enumerate}
\usepackage{datetime}
\usepackage{etex}
\usepackage{multirow}

\makeatother
\numberwithin{equation}{section}
\allowdisplaybreaks


\usepackage{pdflscape}
\usepackage{afterpage}

\theoremstyle{plain}
\newtheorem{theorem}{Theorem}[section]
\newtheorem{corollary}[theorem]{Corollary}
\newtheorem{lemma}[theorem]{Lemma}
\newtheorem{proposition}[theorem]{Proposition}

\theoremstyle{definition}
\newtheorem{definition}[theorem]{Definition}

\newtheorem{assumption}[theorem]{Assumption}

\newtheorem{remark}[theorem]{Remark}

\newtheorem{example}[theorem]{Example}

\newcommand{\E}{\mathbb{E}}
\newcommand{\W}{\dot{W}}
\newcommand{\ud}{\ensuremath{\mathrm{d}}}

\newcommand{\Floor}[1]{\left\lfloor #1 \right\rfloor}

\newcommand{\Indt}[1]{1_{\left\{#1 \right\}}}

\newcommand{\Norm}[1]{\left|\left|  #1   \right|\right|}

\newcommand{\Itos}{It\^{o}'s }

\newcommand{\InPrd}[1]{\left\langle #1 \right\rangle}

\newcommand{\Hcd}{$\widetilde{\text{\textnormal{H}}}$}
\newcommand{\Wcd}{$\widetilde{\text{\textnormal{W}}}$}

\newcommand{\calA}{\mathcal{A}}
\newcommand{\calB}{\mathcal{B}}
\newcommand{\calD}{\mathcal{D}}

\newcommand{\calF}{\mathcal{F}}
\newcommand{\calG}{\mathcal{G}}
\newcommand{\calK}{\mathcal{K}}
\newcommand{\calH}{\mathcal{H}}
\newcommand{\calL}{\mathcal{L}}
\newcommand{\calM}{\mathcal{M}}
\newcommand{\calN}{\mathcal{N}}

\newcommand{\calP}{\mathcal{P}}

\newcommand{\bbN}{\mathbb{N}}

\newcommand{\R}{\mathbb{R}}
\newcommand{\myEnd}{\hfill$\square$}

\newcommand{\He}{\ensuremath{\mathrm{He}}}

\newcommand{\conRef}[1]{(#1)}

\DeclareMathOperator{\Lip}{\mathit{L}}
\DeclareMathOperator{\LIP}{Lip}
\DeclareMathOperator{\lip}{\mathit{l}}

\DeclareMathOperator{\Vip}{\overline{\varsigma}}
\DeclareMathOperator{\vip}{\underline{\varsigma}}
\DeclareMathOperator{\vv}{\varsigma}

\def\sd{\beta}

\newcommand{\myRef}[2]{#1}

\title{{\bf  
Moment bounds in spde's with application to the stochastic wave equation}}

\author{{\bf Le Chen$^*$} and {\bf Robert C. Dalang\footnote{Research
partially supported by the Swiss National Foundation for Scientific
Research.}}\\
\\
\it\small Institut de math\'ematiques\\
\it\small \'Ecole Polytechnique F\'ed\'erale de Lausanne \\
\it\small Station 8 \\
\it\small CH-1015 Lausanne\\
\it\small Switzerland\\
\small \textit{e-mails:}
le.chen@epfl.ch, robert.dalang@epfl.ch\\
\small
\vspace{-1.5em}
\date{}
}

\begin{document}

\maketitle
\begin{center}
\begin{minipage}[rct]{5 in}
\footnotesize \textbf{Abstract:}
We exhibit a class of properties of an spde that guarantees existence,
uniqueness and bounds on moments of the solution. These moment bounds are
expressed in terms of quantities related to the associated deterministic
homogeneous p.d.e.
With these, we can, for instance, obtain solutions to the stochastic heat
equation on the real line for initial data that falls in a certain class of
Schwartz distributions, but our main focus is the stochastic wave equation on
the real line with irregular initial data. We give bounds on higher moments,
and for the hyperbolic Anderson model, explicit formulas for second moments.
We establish weak intermittency and obtain sharp bounds on exponential growth
indices for certain classes of initial conditions with unbounded support.
Finally, we relate H\"older-continuity properties of the
stochastic integral part of the solution to the stochastic
wave equation to integrability properties of the initial data, obtaining the
optimal H\"older exponent.

\vspace{2ex}
\textbf{MSC 2010 subject classifications:}
Primary 60H15. Secondary 60G60, 35R60.

\vspace{2ex}
\textbf{Keywords:}
nonlinear stochastic wave equation, hyperbolic Anderson model, intermittency,
growth indices, H\"older continuity.
\vspace{4ex}
\end{minipage}
\end{center}


\section{Introduction}

Consider a partial differential operator $\calL$ in the time and space variables $(t,x)$ and a space-time white noise $\dot W(t,x)$, where $t \in \R_+^* = \R_+ \setminus \{0\}$ and $x \in \R^d$, along with a function $\theta(t,x)$. We are interested in determining when the stochastic partial differential equation (spde)
\begin{equation}\label{e1}
\calL u(t,x) = \rho\left(u(t,x)\right) \theta(t,x) \W(t,x)\;,\qquad
x\in\R^d, \;
t\in\R_+^*\;,
\end{equation}
with appropriate initial conditions, admits as solution a random field $(u(t,x),\ (t,x) \in \R_+ \times \R^d)$. In this case, we would like estimates and asymptotic properties of moments of $u(t,x)$, as well as H\"older-continuity properties. In this paper, we will develop such estimates for a wide class of operators $\calL$, functions $\theta$ and initial conditions, with an emphasis on the stochastic wave and heat equations.

   One basic example, which also was the starting point of this study, is the parabolic Anderson model. In this case, $d=1$, $\calL = \frac{\partial}{\partial t} - \kappa^2 \frac{\partial^2 }{\partial x^2}$, $\rho(x) = \lambda x$ and $\theta \equiv 1$. The intermittency property of this equation, as defined in \cite{CarmonaMolchanov94}, is studied via the moment Lyapounov exponents, in which estimates of the moments play a key role. Indeed, recall that the {\it upper and lower moment Lyapunov exponents} for constant initial data are defined as follows:
\begin{align}
 \overline{m}_p(x):=& \mathop{\lim\sup}_{t\rightarrow+\infty} \;
\frac{\log
\E\left[|u(t,x)|^p\right]}{t},\qquad
\underline{m}_p(x):=\mathop{\lim\inf}_{t\rightarrow+\infty} \;
\frac{\log
\E\left[|u(t,x)|^p\right]}{t}.
\end{align}
If the initial conditions are constants, then $\overline{m}_p(x)=: \overline{m}_p$ and $\underline{m}_p(x)=: \underline{m}_p$ do not depend on $x$. {\em Intermittency} is the property that $\underline{m}_p = \overline{m}_p =: m_p$ and $m_1 < m_2/2 < \cdots < m_p/p < \cdots$. It is implied by the property  $m_1=0$ and $\underline{m}_2>0$ (see \cite[Definition III.1.1, on p. 55]{CarmonaMolchanov94}), which is called {\em full intermittency}, while {\em weak intermittency}, defined in \cite{FoondunKhoshnevisan08Intermittence} and \cite[Theorem 2.3]{ConusEtal11WaveArxiv} is the property $\overline{m}_2>0$ and $\overline{m}_p < +\infty$, for all $p \geq 2$.

   Another property of the parabolic Anderson model is described by the behavior of exponential growth indices, initiated by Conus and Khoshnevisan in \cite{ConusEtal11WaveArxiv}. They defined
\begin{align}
\label{E4:GrowInd-0}
\underline{\lambda}(p):= &
\sup\left\{\alpha>0: \underset{t\rightarrow \infty}{\lim\sup}
\frac{1}{t}\sup_{|x|\ge \alpha t} \log \E\left(|u(t,x)|^p\right) >0
\right\},\\
\label{E4:GrowInd-1}
 \overline{\lambda}(p) := &
\inf\left\{\alpha>0: \underset{t\rightarrow \infty}{\lim\sup}
\frac{1}{t}\sup_{|x|\ge \alpha t} \log \E\left(|u(t,x)|^p\right) <0
\right\},
\end{align}
This is again a property of moments of the solution $u(t,x)$.

	In the recent paper \cite{ChenDalang13Heat}, in the case $\theta \equiv 1$,
the authors have given minimal conditions on the initial data for existence,
uniqueness and moments estimates in the parabolic Anderson model, building on
the previous results of \cite{BertiniCancrini94Intermittence,ConusEct12Initial}.
The initial condition can be a signed measure, but not a Schwartz distribution
that is not a measure, such as the derivative $\delta_0'$ of the Dirac delta
function. Exact formulas for the second moments were determined for the
parabolic Anderson model, along with sharp bounds for other moments and choices
of the function $\rho$.

	Our program is to extend these kinds of results to many other classes of
spde's. Recall that an spde such as \eqref{e1} is often rigorously formulated
as an integral equation of the form
\begin{align}\label{E:Int}
 u(t,x)=J_0(t,x)+ \iint_{\R_+\times\R^d} G(t-s,x-y) \rho(u(s,y)) \theta(s,y)
W(\ud s,\ud y),
\end{align}
where $J_0: \R_+\times\R^d$ represents the solution of the (deterministic)
homogeneous p.d.e. with the appropriate initial conditions, and $G(t,x)$ is the
fundamental solution of the p.d.e. The stochastic integral in \eqref{E:Int} is
defined in the sense of Walsh \cite{Walsh86}. In a first stage, we shall focus
on the equation \eqref{E:Int}, for given functions $J_0$ and $G$ satisfying
suitable assumptions, even if they are not specifically related to a partial
differential operator $\calL$. For this, the first step is to develop a unified
set of assumptions which are sufficient to guarantee the existence, uniqueness
and moment estimates
of the solution to \eqref{e1}. All of these assumptions should be satisfied for
the $J_0$ and $G$ associated with the stochastic heat equation, so as to
contain the results of \cite{ChenDalang13Heat}. It will turn out that in fact,
they can be verified for quite different equations, such as the stochastic wave
equation, which we discuss in this paper, and the stochastic heat equation with
fractional spatial derivatives as well as other equations, which will be
discussed in forthcoming papers.
	
	The assumptions are given in Section \ref{SS3:Assumption}. In particular, 
$G$ must be a function with certain continuity and integrability properties, and
must satisfy certain bounds, including tail control, and an $L^2$-continuity
property. Another assumption relates
properties of the function $J_0$ with those of $G$.  Finally, a last set of
assumptions concerns the function $\calK$ obtained by summing $n$-fold
space-time convolutions of the square of $G$ with itself.
	
	Our first theorem (Theorem \ref{T3:ExUni}) states that under these
assumptions, we obtain existence, uniqueness and moment bounds of the solution
to \eqref{E:Int}. When particularized to the stochastic heat equation, all the
assumptions are satisfied and the bounds are the same as those obtained in
\cite{ChenDalang13Heat}.
	
	Recall that $\theta(t,x) \equiv 1$ in \cite{ChenDalang13Heat}. Here, as an application of our first theorem, we will show in Theorem \ref{T3:ThetaHeat} that by choosing $\theta$ so that $\theta(t,x) \to 0$ as $t \downarrow 0$ (which means that we taper off the noise near $t=0$), we can extend the class of admissible initial conditions in the stochastic heat equation beyond signed measures. And the more the noise near the origin is killed, the more irregular the initial condition may be. The balance between the admissible
initial data and certain properties of the function $\theta$ is
stated in Theorem \ref{T3:ThetaHeat}. 
For instance, if $\theta(t,x)\equiv 1$, then the initial data cannot go beyond
measures; if
$\theta(t,x)=t^r \wedge 1$ for some $r>0$, then the initial data can be
$\delta^{(k)}_0$ for all integers $k\in [0,2r+1/2[\;$, where $\delta^{(k)}_0$
is the $k$-th distributional derivative of the Dirac delta function $\delta_0$;
if $\theta(t,x)=\exp\left(-1/t\right)$, then any Schwartz
(or tempered) distribution can serve as the initial data (see Examples \ref{e2.25} and \ref{e2.26}).
	
	The second and main application in this paper of our first theorem concerns the stochastic wave equation:
\begin{align}\label{E4:Wave}
\begin{cases}
\left(\frac{\partial^2 }{\partial t^2} - \kappa^2
\frac{\partial^2 }{\partial x^2}\right) u(t,x) =  \rho(u(t,x))
\:\dot{W}(t,x),&
x\in \R,\; t \in\R_+^*, \\
\quad u(0,\cdot) = g(\cdot), \;\; \frac{\partial u}{\partial t}(0,\cdot) =
\mu(\cdot),
\end{cases}
\end{align}
where $\R_+^*=\;]0,\infty[\;$, $\dot{W}$ is space-time white noise, $\rho(u)$ is
globally Lipschitz, $\kappa>0$ is the speed of wave propagation,
$g$ and $\mu$ are the (deterministic) initial position and velocity,
respectively.
The linear case, $\rho(u)=\lambda u$, $\lambda\ne 0$, is called
{\em the hyperbolic Anderson model} \cite{DalangMueller09Intermittency}.

   This equation has been intensively
studied during last two decades by many authors:
 see e.g.,
\cite{CairoliWalsh75SIPlane,
CarmonaNualart88Propagation,
CarmonaNualart88Smooth,
Orsingher82Randomly,
Walsh86} for some early work,
\cite{Dalang09Mini,Walsh86} for an introduction,
\cite{DalangMueller09Intermittency,
DalangMT06FKT}
for the intermittency problems,
\cite{
ConusDalang08Extending,
Dalang99Extending,
DalangFrangos98,
DalangQuer10Compare,
MilletMorien01On,
Peszat02Wave,
PeszatZabczyk97Ev}
for the stochastic wave equation in the spatial domain $\R^d$, $d>1$,
\cite{DalangSanzSole09Holder,
SanSoleSarra99Path}
for regularity of the solution,
\cite{
BrzezniakOndrejat07StrongWave,
BrzezniakOndrejat011Weak}
for the stochastic wave equation with values in
Riemannian manifolds,
\cite{
Chow02wave,
Ondrejat10WaveSobolev,
Ondrejat10WaveCritical}
for wave equations with polynomial nonlinearities,
and
\cite{
MilletSanzSol99SmoothLaw,
NualartQue07Existence,
QuerSanz04WaveSmooth}
for smoothness of the law.  

   Concerning intermittency properties, Dalang and Mueller showed in \cite{DalangMueller09Intermittency} that for the wave equation in spatial domain $\R^{3}$ with spatially
homogeneous colored noise, with $\rho(u) = u$ and constant initial position and velocity, the Lyapunov exponents
$\overline{m}_p$ and $\underline{m}_p$ are both bounded, from above and below respectively,  by some
constant times $p^{4/3}$.
For the stochastic wave equation in spatial dimension 1, Conus {\em et al}
\cite{ConusEtal11WaveArxiv} show that if the initial position and velocity are
bounded and measurable functions, then the moment Lyapunov exponents satisfy
$\overline{m}_p \leq C p^{3/2}$ for $p \geq 2$, and $\overline{m}_2 \geq c
(\kappa/2)^{1/2}$ for positive initial data. The difference in the
exponents---$3/2$ versus $4/3$ in the three dimensional wave equation---reflects
the distinct nature of the driving noises. Recently Conus and Balan
\cite{BalanConus13HeatWave} studied the problem when the noise is Gaussian,
spatially homogeneous and behaves in time like a fractional
Brownian motion with Hurst index $H>1/2$.

   Regarding exponential growth indices, Conus and Khoshnevisan \cite[Theorem
5.1]{ConusKhosh10Farthest} show that for initial data with exponential decay at
$\pm \infty$, $0<\underline{\lambda}(p)\le  \overline{\lambda}(p) <+\infty$, for
all $p \geq 2$. They also show that if the initial data consists of functions
with compact support, then $\underline{\lambda}(p) =
\overline{\lambda}(p)=\kappa$, for all $p \geq 2$.

   One objective of our study is to understand how irregular (and possibly unbounded) initial data affects the random field solutions to \eqref{E4:Wave}; another is to continue the study of moment Lyapounov exponents and exponential growth indices of \cite{ConusEtal11WaveArxiv,ConusKhosh10Farthest}. We will only assume that the initial position $g$ belongs to $L_{loc}^2 \left(\R\right)$, the set of locally square integrable Borel functions, and the initial velocity $\mu$ belongs to $\calM\left(\R\right)$, the set of locally finite Borel measures. These assumptions are natural since the weak solution to the homogeneous wave equation is
\begin{align}\label{E4:J0-Wave}
 J_0(t,x) := \frac{1}{2}\left(g(x+\kappa t)+g(x-\kappa t)\right) +
(\mu*G_\kappa(t,\circ))(x)\;,
\end{align}
where 
$$
   G_\kappa(t,x) = \frac{1}{2} H(t) 1_{[-\kappa t,\kappa t]}(x)
$$ 
is the wave kernel function. Here, $H(t)$ is the Heaviside function (i.e., $H(t)=1$ if $t\ge 0$ and $0$ otherwise), and $*$ denotes convolution in the space variable. 

   Regarding the spde \eqref{E4:Wave}, we interpret it in the integral (mild)
form \eqref{E:Int}:
\begin{equation}\label{E4:WaveInt}
 u(t,x) = J_0(t,x) + I(t,x),
\end{equation}
where
$$
   I(t,x) := \iint_{[0,t]\times\R} G_\kappa\left(t-s,x-y\right) \rho\left( u\left(s,y\right) \right) W\left(\ud s,\ud y\right).
$$

	We show that all the assumptions of Section \ref{SS3:Assumption} are verified
for this equation. More importantly, the abstract bounds take an explicit form
since the function $\calK$ can be evaluated explicitly (see Theorem
\ref{T4:ExUni}). This was also the case for the stochastic heat equation
\cite{ChenDalang13Heat}, but the formula for $\calK$ here is quite different
than in this reference. We also obtain explicit formulas for the second moment
of the solution in the hyperbolic Anderson model, as well as sharp bounds
for higher moments. 
These bounds also apply to other choices of $\rho$. For some particular choices of initial data (such as constant initial position and velocity, or vanishing initial position and Dirac initial velocity), the second moment of the solution takes a particularly simple form (see Corollaries \ref{C4:HomeInit} and \ref{C4:DeltaInit} below).

As an immediate consequence of Theorem \ref{T4:ExUni}, we obtain the result $\overline{m}_p \leq C p^{3/2}$ for $p \geq 2$ of \cite{ConusEtal11WaveArxiv} (see Theorem \ref{T4:Intermit}). We extend their lower bound on the upper Lyapunov exponent $\overline{m}_2$ to the lower Lyapounov exponent, by showing that $\underline{m}_2 \geq c (\kappa/2)^{1/2}$. In the case of the Anderson model $\rho(u) = \lambda u$, we show that $\overline{m}_2 = \underline{m}_2 = \vert \lambda \vert\, (\kappa/2)^{1/2}$.

   Concerning exponential growth indices, we use Theorem \ref{T4:ExUni} to give specific upper and lower bounds on these indices. For instance, we show in Theorem \ref{Tw:Growth} that if the initial position and velocity are bounded below by $c e^{-\beta \vert x \vert}$ and above by $C e^{-\tilde \beta \vert x \vert}$, with $ \beta \geq \tilde \beta$, then
$$
   \kappa \left(1 + \frac{l^2}{8 \kappa \beta^2} \right)^{\frac{1}{2}} \leq \underline{\lambda}(p) \leq \overline{\lambda}(p) \leq \kappa \left(1 + \frac{L^2}{8 \kappa \tilde\beta^2} \right)^{\frac{1}{2}},
$$
for certain explicit constants $l$ and $L$. In the case of the Anderson model $\rho(u) = \lambda u$ and for $p = 2$ and $\beta = \tilde \beta$, we obtain 
$$
   \underline{\lambda}(2) = \overline{\lambda}(2) = \kappa\left (1 + \frac{\lambda^2}{8 \kappa \beta^2} \right)^{1/2}.
$$
Since the exponential growth indices of order $2$ depend on the asymptotic behavior of $E(u(t,x)^2)$ as $t \to \infty$, this equality highlights, in a somewhat surprising way, how the initial data significantly affects the behavior of the solution for all time, despite the presence of the driving noise.

A final question concerns the sample path regularity properties. Denote by $C_{\beta_1,\beta_2}(D)$ the set of  trajectories that are $\beta_1$-H\"older continuous in time and $\beta_2$-H\"older continuous in space on the domain $D\subseteq \R_+\times\R$, and let
\[
   C_{\beta_1-,\beta_2-}(D) := \cap_{\alpha_1\in \;\left]0,\beta_1\right[}
\cap_{\alpha_2\in \;\left]0,\beta_2\right[} C_{\alpha_1,\alpha_2}(D)\;.
\]
Carmona and Nualart \cite[p.484--485]{CarmonaNualart88Smooth} showed that if the initial position is constant and
the initial velocity vanishes, then the solution is in
$C_{1/2-,1/2-}(\R_+\times\R)$ a.s. This property can also be deduced from \cite[Theorem 4.1]{SanSoleSarra99Path}. The case where the spatial domain is $\R^3$ has been studied in \cite{DalangSanzSole09Holder,Dalang09Mini}.

   In \cite{ConusEtal11WaveArxiv}, Conus {\em et al} establish
H\"older-continuity properties of $x \mapsto u(t,x)$ ($t$ fixed). In particular,
they show that if the initial position $g$ is a $1/2$-H\"older-continuous
function
and the initial velocity is square-integrable, then $x \mapsto u(t,x)$ is
$(\frac{1}{2} - \epsilon)$-H\"older-continuous. The assumption on the initial
data is needed, since the H\"older-continuity properties of the initial position
are not smoothed out by the wave kernel but are transferred to $J_0(t,x)$ via
formula \eqref{E4:J0-Wave}.
	
	A related question concerns the stochastic term $I(t,x)$ of
\eqref{E4:WaveInt}, which represents the difference $u(t,x) - J_0(t,x)$ between
the solution of \eqref{E4:Wave} and the solution to the homogeneous wave
equation. We are interested in understanding how properties of the initial data
affect the regularity of $(t,x) \mapsto I(t,x)$. We show in Theorem
\ref{T4:Holder} that the better the (local) integrability properties of the
initial position $g$, the better the regularity of $(t,x) \mapsto I(t,x)$. In
particular, if $g \in L^{2\gamma}_{loc}(\R)$, $\gamma\geq 1$, and $\mu \in
\calM(\R)$, then  $(t,x) \mapsto I(t,x)$ belongs to $C_{\frac{1}{2\gamma'}
-,\frac{1}{2\gamma'}-}\left(\R_+^*\times\R\right)$, where $\frac{1}{\gamma}+
\frac{1}{\gamma'}=1$. We show in Proposition \ref{P4:H-Optimal} that the
H\"older-exponents $\frac{1}{2\gamma'}$ are optimal.

This paper is organized as follows. In Section \ref{SS:MR-Conv}, we study our
abstract integral equation and present the main result in Theorem
\ref{T3:ExUni}. The application to the stochastic heat equation with
distribution-valued initial data is given in Section \ref{SS:ThetaHeat}.
Section \ref{S:MR-Wave} contains the application to the stochastic wave
equation. The main results on existence, uniqueness and formulas and bounds on
moments are stated in Section \ref{SS:Wave-Result} and proved in Section
\ref{ss:wave-lp}. The weak intermittency property is established in Section
\ref{SS:Wave-WInterm}. The bounds on exponential growth indices are given in
Section \ref{SS:Wave-Exp}, and proved in Section \ref{SS:Wave-GrowInd}.
Finally, Section \ref{S:HolerWave} contains our results on H\"older continuity
of the solution of the stochastic wave equation.

\section{Stochastic integral equation of space-time convolution type}
\label{SS:MR-Conv}

We begin by stating the main assumptions which will be needed in our theorem on
existence, uniqueness and moment bounds.

\subsection{Assumptions}\label{SS3:Assumption}
Let $\left\{
W_t(A):A\in\calB_b\left(\R^d\right),\, t\ge 0 \right\}$
be a space-time white noise
defined on a complete probability space $(\Omega,\calF,P)$, where
$\calB_b\left(\R^d\right)$ is the
collection of Borel sets with finite Lebesgue measure.
Let  $(\calF_t,\, t\ge 0)$ be the standard filtration generated by this space-time
white noise, i.e., $\calF_t = \sigma\left(W_s(A):0\le s\le
t,A\in\calB_b\left(\R^d\right)\right)\vee
\calN$, where $\calN$ is the $\sigma$-field generated by all $P$-null sets in
$\calF$.
We use $\Norm{\cdot}_p$ to denote the
$L^p(\Omega)$-norm.
A random field $Y(t,x)$, $(t,x)\in \R_+^*\times
\R^d$, is said to be {\it adapted} if for all $(t,x)\in\R_+^*\times\R^d$,
$Y(t,x)$ is $\calF_t$-measurable, and it is said to be {\it jointly measurable}
if it is measurable with respect to $\calB(\R_+^*\times\R^d)\times\calF$.
For $p\ge 2$, if $\lim_{\left(t',x'\right)\rightarrow
(t,x)}\Norm{Y(t,x)-Y\left(t',x'\right)}_p
=0$ for all $(t,x) \in \R_+^*\times\R^d$, then
$Y$ is said to be {\it $L^p(\Omega)$-continuous}.

Let  $G$, $J_0: \R_+\times\R^d\mapsto \R$ be deterministic Borel functions.
We use the convention that $G(t,\cdot)\equiv 0$ if $t\le 0$.  In the following,
we will use $\cdot$ and $\circ$ to denote the time and space dummy variables
respectively.

\begin{definition}\label{D3:Solution}
A random field $(u(t,x),\, (t,x)\in\R_+\times\R^d)$, is called a
{\it solution} to \eqref{E:Int} if
\begin{enumerate}[(1)]
\item $u(t,x)$ is adapted and jointly measurable;
\item For all $(t,x)\in\R_+^*\times\R^d$,
$\left(G^2(\cdot,\circ) \star \left[\Norm{\rho(u(\cdot,\circ))}_2^2
\theta^2(\cdot,\circ)\right] \right)(t,x)<+\infty$, where  $\star$ denotes
the simultaneous convolution in both space and time variables,
and the function $(t,x)\mapsto I(t,x)$ from $\R_+\times\R^d$ into
$L^2(\Omega)$ is continuous;
\item $u(t,x)=J_0(t,x)+I(t,x)$, where for all $(t,x)\in\R_+\times\R^d$,
\begin{align}\label{E:WalshSI}
 I(t,x) =  \iint_{\R_+\times\R^d} G\left(t-s,x-y\right)
\rho\left( u\left(s,y\right)\right)
\theta\left(s,y\right)W\left(\ud s,\ud y\right),\quad\text{a.s.}
\end{align}
\end{enumerate}
We call $I(t,x)$ the {\it stochastic integral part} of the random field
solution. This stochastic integral is interpreted in the sense of Walsh
\cite{Walsh86}.
\end{definition}

\begin{remark}
Consider the stochastic wave equation \eqref{E4:Wave}
with $g\in L^2_{loc}\left(\R\right)$ and $\mu=0$.
In this case, $J_0(t,x) = 1/2
\left(g(\kappa t +x)+g(\kappa t -x)\right)$. Since the
initial position $g$ may not be defined for every $x$, the function $(t,x) \mapsto J_0(t,x)$ may not be defined for certain $(t,x)$.
Therefore, for these $(t,x)$, $u(t,x)$ may not be well-defined (see Example
\ref{Ex4:wave-g-1/4}). Nevertheless, as we will show later, $I(t,x)$ is always
well
defined for each $(t,x)\in\R_+\times\R$, and in most cases (when Assumption
\ref{A:Holder} below holds), it has a continuous version.
Finally, we remark that for the stochastic heat equation with deterministic
initial conditions, this problem does not arise because in that equation,
$(t,x)\mapsto J_0(t,x)$ is continuous over $\R_+^*\times\R$ thanks
to the smoothing effect of the heat kernel.
\end{remark}

As in \cite{Dalang99Extending}, a very first issue is whether the linear
equation, where $\rho(u)\equiv 1$, admits a random field solution.
For $t\in \R_+$, and $x,y\in\R^d$, this leads to examining the quantity
\begin{align}\label{E:Theta}
\Theta(t,x,y):= \iint_{[0,t]\times\R^{d}}\ud s \ud z \:
G(t-s,x-z) G\left(t-s,y-z\right) \theta^2(s,z) \;.
\end{align}
Clearly, $2 \Theta(t,x,y) \le \Theta(t,x,x)+\Theta\left(t,y,y\right)$.

\begin{assumption}
\label{A:R-G}
$G(t,x)$ is such that\\
(i)
$\Theta(t,x,x) <+\infty$ for all $(t,x)\in\R_+\times\R^d$;\\
(ii)
$\lim_{\left(t',x'\right)\rightarrow (t,x)}
G\left(t',x'\right) = G(t,x)$,
for almost
all $(t,x)\in\R_+\times\R^d$.
\end{assumption}

If $\theta(t,x)\equiv 1$, $d=1$ and if the
underlying partial differential operator is $\frac{\partial}{\partial t}-\calA$,
where $\calA$ is the generator of a real-valued L\'evy
process with the L\'evy exponent $\Psi(\xi)$, then Assumption \ref{A:R-G} (i) is
equivalent to $\frac{1}{2\pi}\int_\R \frac{\ud \xi}{\beta+2\Re \Psi(\xi)}
<+\infty$, for all $\beta>0$,
where $\Re \Psi(\xi)$ is the real part of $\Psi(\xi)$:
see \cite{Dalang99Extending,FoondunKhoshnevisan08Intermittence}.
For the one-dimensional stochastic heat equation studied in
\cite{ChenDalang13Heat}, it is also clearly satisfied.
For the stochastic wave equation \eqref{E4:Wave}, this assumption also holds: see
\eqref{E4:Theta}.

\begin{assumption}\label{A:InDt}
For all
compact sets $K\subseteq \R_+^*\times\R^d$ and all integers $p\ge 2$,
\[
\sup_{(t,x)\in K}\left(\left(\left[1+J_0^2\right]\theta^2 \right)\star
G^2 \right)(t,x)<+\infty.
\]
\end{assumption}

   We note that a related assumption appears in \cite[Proposition 1.8]{CarmonaNualart88Smooth}.
The next three assumptions will be used to establish the
$L^p(\Omega)$-continuity in a Picard iteration.
Assumption \ref{A:Cont-Bdd} is for kernel functions
similar to the wave kernel and Assumptions \ref{A:Cont-Tail}--\ref{A:InDt-Bdd}
are for those similar to the heat kernel.
We need some notation:
for $\beta\in \;]0,1[\:$, $\tau>0$, $\alpha>0$ and $(t,x)\in\R_+^*\times\R^d$,
define
\begin{align}\label{E:B}
B_{t,x,\beta,\tau,\alpha}:=\left\{\left(t',x'\right)\in\R_+^*\times\R^d:\: \beta
t \le  t' \le t+\tau,\: \left|x-x'\right|\le \alpha\right\}\:.
\end{align}

\begin{assumption}[Uniformly bounded kernel functions]\label{A:Cont-Bdd}
There exist three constants $\beta\in\; ]0,1[\:$, $\tau>0$ and
$\alpha>0$ such that for all $(t,x)\in\R_+^*\times\R^d$, for some constant
$C>0$, we have for all
$\left(t',x'\right)\in B_{t,x,\beta,\tau,\alpha}$ and all $\left(s,y\right)\in
[0,t'[\times\R^d$, $G(t'-s,x'-y) \le C \: G(t+1-s,x-y)$.
\end{assumption}

\begin{assumption}[Tail control of kernel functions]\label{A:Cont-Tail}
There exists $\beta\in \;]0,1[$ such that for all
$(t,x)\in\R_+^*\times\R^d$, for some constant $a>0$, we have for all
$\left(t',x'\right)\in B_{t,x,\beta,1/2,1}$ and all $s \in [0,t'[$ and $y\in
\R^d$ with
$|y|\ge a$, $G(t'-s,x'-y) \le G(t+1-s,x-y)$.
\end{assumption}

\begin{assumption}\label{A:Continuous}
For all $(t,x)\in\R_+^*\times\R^d$, 
\begin{gather*}
\lim_{\left(t',x'\right)\rightarrow (t,x)}
\iint_{\R_+\times\R^{d}}\ud s\ud y \:
\theta(s,y)^2\left(
G(t'-s,x'-y)-G\left(t-s,x-y\right)
\right)^2 =0 .
\end{gather*}
\end{assumption}

Note that this assumption can be more explicitly expressed in the following way:
\begin{multline}\label{E:A-Cont}
\int_0^{t_*} \ud s\int_{\R^{d}}\ud y \:
\theta(s,y)^2 \left(
G(t'-s,x'-y)-G\left(t-s,x-y\right)
\right)^2\\
+\int_{t_*}^{\hat{t}} \ud s\int_{\R^{d}}\ud y\:
\theta(s,y)^2
G^2\left(\hat{t}-s,\hat{x}-y\right) \rightarrow 0,
\end{multline}
as $(t',x')\rightarrow (t,x)$, where
\begin{align}\label{E:txtx}
  \left(\: t_*,x_*\: \right) =
  \begin{cases}
    \left(t',x'\right) &    \text{if $t'\le t$,}\cr
    (t,x)  &    \text{if $t'> t$,}
  \end{cases}
  \quad\text{and}\quad
  \left(\hat{t},\hat{x}\right) =
  \begin{cases}
    (t,x) &    \text{if $t'\le t$.}\cr
    \left(t',x'\right)  &    \text{if $t'> t$.}
  \end{cases}
\end{align}


\begin{assumption}\label{A:InDt-Bdd}
For all compact sets $K \subseteq
\R_+^*\!\times\R^d$, $\sup_{(t,x)\in K} \left|J_0(t,x)\right| <\infty$.
\end{assumption}

   The remaining assumptions are mainly needed for control of the moments of the solution. We introduce some notation. For two functions $f, g : \R_+\times\R^d\mapsto \R_+$, define their
{\it $\theta$-weighted space-time convolution} by
\[\left(f\rhd g\right)(t,x) :=
\left(\left(\theta^2 f \right)\star
g\right)(t,x),\quad\text{for all $(t,x)\in\R_+\times\R^d$},
\]
In the following, $f(t,x)$ will play the role of $J_0^2(t,x)$, and $g(t,x)$ of
$G^2(t,x)$. In the Picard iteration scheme, the expression
$\left(
\left(
\cdots
\left(
\left(f\rhd g_1 \right)
\rhd g_2 \right)\rhd
\cdots
\right)\rhd g_n
\right) (t,x)$ will appear, where $g_i = g$.
Since $\rhd$ is {\em not} associative
in general (contrary to the case $\theta \equiv 1$), we need to handle this formula with care.

\begin{definition}
Let $n\ge 2$ and let $g_k:\R_+\times\R^d
\mapsto \R_+$, $k=1,\dots, n$.
Define the
{\em $\theta$-weighted multiple space-time convolution},
for $(t,x)$, $\left(s,y\right)\in\R_+\times\R^d$ with $0\le s \le t$,
by
\begin{align} \nonumber
&\rhd_n\left(g_1, g_2 , \dots, g_n\right)
\left(t,x;s,y\right)\\ \nonumber
&\quad :=
\int_0^s \ud s_{n-1}\int_{\R^d}\ud y_{n-1}\:
g_n\left(s-s_{n-1},y-y_{n-1}\right)
\theta^2\left(t-s+s_{n-1},x-y+y_{n-1}\right)
\\ \nonumber
&\quad\qquad \times\int_0^{s_{n-1}}\ud s_{n-2}\int_{\R^d}\ud y_{n-2}\:
g_{n-1}\left(s_{n-1}-s_{n-2},y_{n-1}-y_{n-2}\right)
\theta^2\left(t-s+s_{n-2},x-y+y_{n-2}\right)
\\ \nonumber
&\qquad\qquad \times \cdots\cdots\times\int_0^{s_3}\ud s_2\int_{\R^d} \ud y_2\:
g_3\left(s_3-s_2,y_3-y_2\right)
\theta^2\left(t-s+s_2,x-y+y_2\right)
\\ 
&\qquad\qquad \times\int_0^{s_2}\ud s_1\int_{\R^d}\ud y_1\:
g_2\left(s_2-s_1,y_2-y_1\right)
\theta^2\left(t-s+s_1,x-y+y_1\right)
g_1 \left(s_1,y_1\right).
\label{E:FORRHD}
\end{align}
\end{definition}
\vspace{1em}

Notice that
\[\rhd_n\left(g_1,\dots,g_n\right)(t,x;t,x)
=\left(
\left(
\cdots
\left(
\left(g_1\rhd g_2\right)\rhd g_3
\right)\rhd \cdots
\right)
\rhd g_n
\right)(t,x),\]
where the r.h.s. has $n-1$ convolutions.
By the change of variables
\begin{equation}
 \label{E:ChangeVar}
\begin{aligned}
& \tau_1 = s-s_{n-1},& \tau_2 &= s-s_{n-2},& &\cdots, & \tau_{n-1} &=
s-s_{1}\;,\quad \text{and}\\
& z_1 = y-y_{n-1},& z_2 &= y-y_{n-2},& &\cdots, & z_{n-1} &=
y-y_{1}\;,
\end{aligned}
\end{equation}
and Fubini's theorem,
the multiple convolution $\rhd_n$ has an equivalent definition:
\begin{align}
\begin{aligned}
\rhd_n&\left(g_1, g_2 , \dots, g_n\right)
\left(t,x;s,y\right)\\
&=
\int_0^s \ud \tau_{n-1}\int_{\R^d}\ud z_{n-1}\:
\theta^2\left(t-\tau_{n-1},x-z_{n-1}\right)
g_1\left(s-\tau_{n-1},y-z_{n-1}\right)
\\
&\quad\times\int_0^{\tau_{n-1}}\ud \tau_{n-2}\int_{\R^d}\ud z_{n-2}\:
\theta^2\left(t-\tau_{n-2},x-z_{n-2}\right)
g_2\left(\tau_{n-1}-\tau_{n-2},z_{n-1}-z_{n-2}\right)
\\
&\quad\times \cdots\cdots
\times\int_0^{\tau_3}\ud \tau_2 \int_{\R^d}\ud z_2\:
\theta^2\left(t-\tau_2,x-z_2\right)
g_{n-2}\left(\tau_3-\tau_2,z_3-z_2\right)
\\
&\quad\times\int_0^{\tau_2}\ud \tau_1\int_{\R^d} \ud z_1\:
\theta^2\left(t-\tau_1,x-z_1\right)
g_{n-1}\left(\tau_2-\tau_1,z_2-z_1\right)
g_n\left(\tau_1,z_1\right)\;.
\end{aligned}
\label{E:BACKRHD}
\end{align}

\begin{lemma}
\label{L3:RhdAs}
Let $f, g_k:\R_+\times\R^d\mapsto\R_+$, $k=1,\dots,n+1$, and $n\ge 2$.
Then for all $(t,x)\in\R_+\times\R^d$, we have
\begin{equation}
\left(
\left(
\cdots
\left(
\left(f\rhd g_1 \right)
\rhd g_2 \right)\rhd \cdots
\right)\rhd g_n
\right) (t,x)
\label{E:RhdAs1}
= \left(f\rhd
\rhd_n\left(g_1,\dots,g_n\right)(t,x;\cdot,\circ)\right)(t,x),
\end{equation}
\begin{equation}
\left(f\rhd
\rhd_n\left(g_1,\dots,g_n\right)(t,x;\cdot,\circ)\right)(t,x)
=
\left(\left(f\rhd g_1\right)\rhd
\rhd_{n-1}\left(g_2,\dots,g_n\right)(t,x;\cdot,\circ)\right)(t,x),
\label{E:RhdAs10}
\end{equation}
and
\begin{multline}\label{E:RhdAs2}
\int_0^t \ud s\int_{\R^d}\ud y
\left(f\rhd
\rhd_n\left(g_1,\dots,g_n\right)(s,y;\cdot,\circ)\right)(s,y)
\;\theta^2(s,y)g_{n+1}\left(t-s,x-y\right)
\\
=
\left(f\rhd
\rhd_{n+1}\left(g_1,\dots,g_{n+1}\right)(t,x;\cdot,\circ)\right)(t,x).
\end{multline}
\end{lemma}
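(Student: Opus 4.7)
The plan is to reduce all three identities to a single direct calculation, namely \eqref{E:RhdAs1}; then \eqref{E:RhdAs10} and \eqref{E:RhdAs2} follow without further work.

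For \eqref{E:RhdAs1} I would proceed directly, with no induction needed. Unpack the RHS as
$$
\int_0^t \ud s \int_{\R^d} \ud y\; \theta^2(s,y) f(s,y)\; \rhd_n(g_1,\ldots,g_n)(t,x;t-s,x-y),
$$
and substitute $(t-s,\,x-y)$ into the last two slots of the forward form \eqref{E:FORRHD}. The arguments of $\theta^2$ transform from $(t-s+s_i,\,x-y+y_i)$ into $(s+s_i,\,y+y_i)$, which suggests the change of variables $s_i' := s_i + s$, $y_i' := y_i + y$ for $i = 1,\ldots,n-1$. Under this substitution every $\theta^2$ becomes $\theta^2(s_i',y_i')$, every $g_k$ with $k\ge 2$ is unchanged (it depends only on differences), and only $g_1(s_1,y_1) = g_1(s_1'-s,\,y_1'-y)$ still depends on $(s,y)$. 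The new integration region is $0 \le s \le s_1' \le s_2' \le \cdots \le s_{n-1}' \le t$.

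Since all integrands are nonnegative, Tonelli's theorem lets me reorder freely. Pushing $\int_0^t \ud s \int \ud y$ all the way inside, just before $g_1$, it becomes $\int_0^{s_1'} \ud s \int \ud y$, and the innermost double integral
$$
\int_0^{s_1'} \ud s \int_{\R^d} \ud y\; \theta^2(s,y) f(s,y)\, g_1(s_1' - s,\, y_1' - y)
$$
is exactly $(f \rhd g_1)(s_1', y_1')$. What remains is the forward form of the iterated $\rhd$-convolution $(\cdots((f \rhd g_1) \rhd g_2) \rhd \cdots) \rhd g_n$ evaluated at $(t,x)$, i.e.\ the LHS of \eqref{E:RhdAs1}. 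Equation \eqref{E:RhdAs10} is then immediate: \eqref{E:RhdAs1} at level $n$ identifies its LHS with $(\cdots((f \rhd g_1) \rhd g_2) \rhd \cdots) \rhd g_n)(t,x)$, while \eqref{E:RhdAs1} at level $n-1$, applied with $f \rhd g_1$ in place of $f$ and $(g_2,\ldots,g_n)$ in place of $(g_1,\ldots,g_{n-1})$, identifies its RHS with the same expression. Equation \eqref{E:RhdAs2} is similarly immediate: by \eqref{E:RhdAs1} at level $n$ applied pointwise in $(s,y)$, the LHS equals $\big(((\cdots((f \rhd g_1) \rhd \cdots) \rhd g_n)) \rhd g_{n+1}\big)(t,x)$, which by \eqref{E:RhdAs1} at level $n+1$ equals the RHS.

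The only real obstacle is bookkeeping: the $n-1$ simultaneous variable shifts, the corresponding updates to the integration limits, and the Tonelli reordering all have to be carried out so that the $\theta^2$ factors land in precisely the slots that appear in the iterated $\rhd$ convolution. There is no analytic subtlety, since the nonnegativity of $f$, $g_k$ and $\theta^2$ makes Tonelli apply unconditionally, even when both sides are infinite.
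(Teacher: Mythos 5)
Your proof is correct: the substitution $s_i' = s_i + s$, $y_i' = y_i + y$ together with Tonelli's theorem (legitimate since all integrands are nonnegative) does turn the right-hand side of \eqref{E:RhdAs1} into the iterated $\rhd$-convolution, and \eqref{E:RhdAs10} and \eqref{E:RhdAs2} then follow formally exactly as you describe. The paper omits the proof entirely (calling it straightforward and citing the thesis), and your direct computation is precisely the intended argument; the only point worth flagging is that for $n=2$ your derivation of \eqref{E:RhdAs10} invokes \eqref{E:RhdAs1} at level $n-1=1$, which requires the (natural) convention $\rhd_1(g_2)(t,x;s,y)=g_2(s,y)$, under which that instance is true by definition.
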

Note that $(s,y)$ appears twice in the term $f\rhd\rhd_n(\cdots)$ on the
l.h.s. of \eqref{E:RhdAs2}.
The proof of Lemma \ref{L3:RhdAs} is straightforward; see
\cite[Lemma 3.2.6]{LeChen13Thesis} for details.
When $n=2$, for $f$ and $g : \R_+\times\R^d\mapsto \R_+$, $\rhd_2(f,g)(t,x;t,x)
= \left(f\rhd g\right)(t,x)$ and
\begin{align}
\rhd_2\left(f,g\right) (t,x; s,y)
&=\int_0^s\ud s_0\int_{\R^{d}} \ud y_0\:
g\left(s-s_0,y-y_0\right)
\theta^2 \left(t-s+s_0,x-y+y_0\right)
f\left(s_0,y_0\!\right)
\label{E:ForRHD}
\\
\label{E:BackRHD}
&=\int_0^s\ud \tau_0  \int_{\R^{d}}\ud z_0\:
\theta^2 \left(t-\tau_0,x-z_0\right)
f\left(s-\tau_0,y-z_0\right)
g\left(\tau_0,z_0\right).
\end{align}
In particular, if $\theta(t,x)\equiv 1$,
then $\rhd_2$ reduces to the standard
space-time convolution $\star$ (as is the case for $\rhd$), in which case the
first two variables $(t,x)$
do not play a role.
We call \eqref{E:ForRHD} and \eqref{E:FORRHD} the {\it forward}
formulas,
and \eqref{E:BackRHD} and \eqref{E:BACKRHD} the {\it backward}
formulas.

For $\lambda\in\R$, define $\calL_0\left(t,x;\lambda\right):=\lambda^2
G^2(t,x)$, and for $n\in\bbN^*$, 
\[\calL_n\left(t,x;s,y;\lambda\right):=
\rhd_{n+1}\big(\calL_0(\cdot,\circ;\lambda),
\dots,\calL_0(\cdot,\circ;\lambda)\big)\left(t,x;s,y\right)\]
for all $(t,x),\left(s,y\right)\in\R_+^*\times \R^d$ with $s\le t$.
By convention, $\calL_0\left(t,x;s,y;\lambda\right) = \lambda^2
G^2\left(s,y\right)$.
For $n\in\bbN$, define
\[\calH_n\left(t,x;\lambda\right) := \left(1\rhd \calL_n(t,x;
\cdot,\circ;\lambda)\right)\left(t,x\right).
\]
By definition, both
$\calL_n$ and $\calH_n$ are non-negative.
We use the following conventions:
\begin{equation}\label{E:Convention}
\begin{aligned}
 \calL_n\left(t,x;s,y\right) &:= \calL_n\left(t,x;s,y;\: \lambda\right),&
 \overline{\calL}_n\left(t,x;s,y\right)&:=
\calK\left(t,x;s,y;\: \Lip_\rho \right),\\
\underline{\calL}_n\left(t,x;s,y\right)&:=
\calL_n\left(t,x;s,y;\: \lip_\rho \right), &
\widehat{\calL}_n\left(t,x;s,y\right)&:=
\calL_n\left(t,x;s,y;\: a_{p,\Vip}\: z_p\: \Lip_\rho \right),\;
\text{$p\ge 2$}\;,
\end{aligned}
\end{equation}
where the constant $a_{p,\Vip} (\le 2)$ is defined by
\begin{align}\label{E:a_pv}
a_{p,\Vip} \: :=\:
\begin{cases}
2^{(p-1)/p}& \Vip\ne 0,\: p>2,\cr
\sqrt{2} & \Vip =0,\: p>2,\cr
1 & p=2,
\end{cases}
\end{align}
and $z_p$ is the optimal universal constant in the
Burkholder-Davis-Gundy inequality (see \cite[Theorem 1.4]{ConusKhosh10Farthest})
and so $z_2=1$ and $z_p\le 2\sqrt{p}$ for all $p\ge 2$.
Note that the kernel function $\widehat{\calL}_n\left(t,x;s,y\right)$ depends on
the parameters $p$ and $\Vip$, which is usually clear from the context.
Similarly, define $\overline{\calH}_n(t,x)$, $\underline{\calH}_n(t,x)$ and
$\widehat{\calH}_n(t,x)$.
The same conventions will apply to
$\calK\left(t,x;s,y\right)$,
$\overline{\calK}\left(t,x;s,y\right)$, $\underline{\calK}\left(t,x;s,y\right)$
and $\widehat{\calK}\left(t,x;s,y\right)$
below.

\begin{assumption}\label{A:Converge}
The kernel functions $\calL_n\left(t,x;s,y;\lambda\right)$ and
$\calH_n(t,x;s;\lambda)$,
with $n\in\bbN$ and $\lambda\in\R$, are well defined and
the sum of $\calL_n\left(t,x;s,y;\lambda\right)$ converges
for all
$(t,x)$ and $(s,y)\in\R_+^*\times\R^d$ with $s\le t$. Denote this sum
by
\[\calK\left(t,x;s,y;\lambda\right):= \sum_{n=0}^\infty
\calL_n\left(t,x;s,y;\lambda\right).\]
\end{assumption}

The next assumption is a convenient assumption which will guarantee the
continuity of the function $(t,x)\mapsto I(t,x)$ from $\R_+\times\R^d$ into
$L^p(\Omega)$ for $p\ge 2$.

\begin{assumption}\label{A:Gronwall}
There are non-negative functions $B_n(t):=B_n(t;\lambda)$ such that
(i) $B_n(t)$ is nondecreasing in $t$;
(ii) for all
$(t,x)$, $(s,y)\in\R_+^*\times\R^d$ with $s\le t$ and $n\in\bbN$,
$\calL_n\left(t,x;s,y\right) \le \calL_0\left(s,y\right) B_n(t)$
(set $B_0(t)\equiv 1$);
(iii) $\sum_{n=0}^\infty \sqrt{B_n(t)}<+\infty$, for all $t>0$.
\end{assumption}

The above assumption guarantees that the following function (without any
square root) is well defined:
\begin{align}
 \label{E:Upsilon}
\Upsilon\left(t;\lambda\right) := \sum_{n=0}^\infty
B_n\left(t;\lambda\right)\;,\quad t\ge 0.
\end{align}
We use the same conventions on the parameter $\lambda$
for the function $\Upsilon(t;\lambda)$.
Clearly, for all $(t,x)$ and $\left(s,y\right)\in\R_+\times\R^d$ such that $s\le
t$,
\begin{align}\label{E:KUP}
\calK\left(t,x;s,y\right) \le \Upsilon(t) \calL_0\left(s,y\right).
\end{align}
Another consequence of Assumption \ref{A:Gronwall} is that
$\sum_{n=0}^\infty \calH_n(t,x)\le \calH_0(t,x) \Upsilon(t)
<+\infty$ for all $(t,x)\in\R_+\times\R^d$ and $0\le s\le t$,
and so the function $\calH(t,x):= \left(1\rhd
\calK(t,x;\cdot,\circ)\right)\left(t,x\right)$ is well defined
and equals $\sum_{n=0}^\infty \calH_n(t,x)$ by the
monotone convergence theorem.

The following chain of inequalities is a direct consequence of Assumption
\ref{A:R-G} and the observations
above: for all $n\in\bbN$, and all
$(t,x)$, $(s,y)\in\R_+^*\times\R^d$ with $s\le t$,
\begin{gather}
 \left(J_0^2\rhd \calL_n(t,x;\cdot,\circ)\right) \left(t,x\right)
\le
\left( J_0^2\rhd \calK(t,x;\cdot,\circ) \right) \left(t,x\right)
\le
\Upsilon(t) \left( J_0^2\rhd \calL_0\right) \left(t,x\right)
<+\infty\:.
 \label{E:IKn-fin}
\end{gather}

\subsection{Main theorem}\label{SS3:MainRes}
Assume that $\rho:\R\mapsto \R$ is globally Lipschitz
continuous with Lipschitz constant $\LIP_\rho>0$.
We need some growth conditions on $\rho$:
Assume that for some constants $\Lip_\rho>0$ and $\Vip \ge 0$,
\begin{align}\label{E:LinGrow}
|\rho(x)|^2 \le \Lip_\rho^2 \left(\Vip^2 +x^2\right),\qquad \text{for
all $x\in\R$}\;,
\end{align}
Note that $\Lip_\rho\le \sqrt{2}\LIP_\rho$, and the inequality may be strict.
In order to bound the second moment from below,
we will sometimes assume that for some constants $\lip_\rho>0$ and $\vip \ge 0$,
\begin{align}\label{E:lingrow}
|\rho(x)|^2\ge \lip_{\rho}^2\left(\vip^2+x^2\right),\qquad \text{for
all $x\in\R$}\;.
\end{align}
We shall also give particular attention to the Anderson model, which is a
special case of
the
following quasi-linear growth condition: for some constants $\vv\ge 0$ and
$\lambda\ne 0$,
\begin{align}\label{E:qlinear}
|\rho(x)|^2= \lambda^2\left(\vv^2+x^2\right),\qquad  \text{for
all $x\in\R$}\;.
\end{align}

To facilitate stating the theorem, we group the assumptions above as follows:
\begin{enumerate}
 \item[\conRef{G}] (General conditions):
\begin{enumerate}
 \item $G(t,x)$ satisfies Assumptions \ref{A:R-G}, \ref{A:Converge},
and
\ref{A:Gronwall};
 \item $J_0(t,x)$ and $\theta(t,x)$ satisfy Assumption \ref{A:InDt}.
\end{enumerate}
\item[\conRef{W}] (Wave type)
$G(t,x)$ satisfies Assumptions \ref{A:Cont-Bdd}.
\item [\conRef{H}] (Heat type):
\begin{enumerate}[(a)]
\item $G(t,x)$ satisfies Assumptions \ref{A:Cont-Tail} and
\ref{A:Continuous};
\item $J_0(t,x)$ satisfies Assumption \ref{A:InDt-Bdd}.
\end{enumerate}
\end{enumerate}

\begin{theorem}\label{T3:ExUni}
Suppose the function $\rho(u)$ is Lipschitz continuous and satisfies the growth
condition
\eqref{E:LinGrow}.
If \conRef{G} and at least
one of \conRef{W} and \conRef{H} hold,
then the stochastic integral equation \eqref{E:Int} has a
solution
\[
\left\{\;u(t,x)=J_0(t,x)+I(t,x):\; t>0,\; \;x \in\R^d\:\:\right\}
\]
in the sense of Definition \ref{D3:Solution}.
This solution has the following properties:\\
(1) $I(t,x)$ is unique (in the sense of versions).\\
(2) $I(t,x)$ is $L^p(\Omega)$--continuous over $\R_+\times\R^d$ for all
integers $p\ge 2$.\\
(3) For all even integers $p\ge 2$, $t>0$, and $x,y\in\R^d$,
\begin{gather}
\label{E:Mom-Up}
\Norm{u(t,x)}_p^2 \le
\begin{cases}
J_0^2(t,x) + \left(J_0^2\rhd
\overline{\calK}(t,x;\cdot,\circ) \right) (t,x) +
\Vip^2 \: \overline{\calH}(t,x),& \text{if $p=2$,}\\[0.6em]
2 J_0^2(t,x) + \left(2 J_0^2 \rhd
\widehat{\calK}(t,x;\cdot,\circ) \right) (t,x) +
\Vip^2 \: \widehat{\calH}(t,x),& \text{if $p>2$,}
\end{cases}
\end{gather}
and
\begin{multline}
\label{E:TP-Up}
\E\left[u(t,x)u(t,y)\right]  \le
J_0(t,x)J_0(t,y) + \Lip_\rho^2\: \Vip^2\: \Theta(t,x,y)\\
+\Lip_\rho^2\int_0^t\ud s \int_{\R^d}\ud z\:
\overline{f}(s,z) \: \theta^2(s,z) \: G(t-s,x-z) G\left(t-s,y-z\right) ,
\end{multline}
where $\overline{f}(s,z)$ denotes the r.h.s. of \eqref{E:Mom-Up} for $p=2$.\\
(4) If $\rho$ satisfies \eqref{E:lingrow}, then for all $t>0$, and
$x,y\in\R^d$,
\begin{align}
\label{E:SecMom-Lower}
\Norm{u(t,x)}_2^2 \ge J_0^2(t,x) + \left(J_0^2 \rhd
\underline{\calK}(t,x;\cdot,\circ) \right)
(t,x)+
\vip^2\: \underline{\calH}(t,x),
\end{align}
and
\begin{multline}
\label{E:TP-Lower}
\E\left[u(t,x)u(t,y)\right] \ge
J_0(t,x)J_0(t,y) + \lip_\rho^2\: \vip^2\: \Theta(t,x,y)\\
+\lip_\rho^2\int_0^t\ud s \int_{\R^d}\ud z\: \underline{f}(s,z) \:
\theta^2(s,z)\:
G(t-s,x-z) G\left(t-s,y-z\right) ,
\end{multline}where $\underline{f}(s,z)$ denotes the r.h.s. of
\eqref{E:SecMom-Lower}.\\
(5) In particular, for the quasi-linear case $|\rho(u)|^2=\lambda^2
\left(\vv^2+u^2\right)$, for all $t>0$, $x$, $y\in\R^d$,
\begin{align}
 \label{E:SecMom}
 \Norm{u(t,x)}_2^2 = J_0^2(t,x)
 + \left(J_0^2 \rhd \calK(t,x;\cdot,\circ) \right)
(t,x)+ \vv^2 \:\calH(t,x),
\end{align}
and
\begin{multline}
\label{E:TP}
\E\left[u(t,x)u(t,y)\right] =
J_0(t,x)J_0(t,y) + \lambda^2 \vv^2 \: \Theta(t,x,y) \\
+ \lambda^2 \int_0^t\ud s \int_{\R^d} \ud z\: f(s,z) \: \theta^2(s,z)\:
G(t-s,x-z) G\left(t-s,y-z\right),
\end{multline}
where $f(s,z)=\Norm{u(s,z)}_2^2$ is given in
\eqref{E:SecMom}.
\end{theorem}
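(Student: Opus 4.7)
The plan is to construct the solution via a Picard iteration, setting $u_0(t,x):=J_0(t,x)$ and, for $n\ge 0$,
\[
 u_{n+1}(t,x) := J_0(t,x) + \iint_{[0,t]\times\R^d} G(t-s,x-y)\,\rho(u_n(s,y))\,\theta(s,y)\, W(\ud s,\ud y).
\]
Applying the Burkholder-Davis-Gundy inequality with optimal constant $z_p$ together with the growth condition \eqref{E:LinGrow} produces a recursive $L^p(\Omega)$-bound of the schematic form
\[
 \|u_{n+1}(t,x)\|_p^2 \le a_{p,\Vip}^2 J_0^2(t,x) + (a_{p,\Vip} z_p \Lip_\rho)^2 \iint G^2(t-s,x-y)\,\theta^2(s,y)\bigl(\Vip^2 + \|u_n(s,y)\|_p^2\bigr)\,\ud s\,\ud y.
\]
Assumption \ref{A:InDt}, stated with the factor $1+J_0^2$, guarantees that the first iterate $u_1$ is finite in $L^p$, while Assumption \ref{A:R-G}(i) yields the needed spatial integrability of $G^2$ through $\Theta(t,x,x)<\infty$.

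Unrolling this recursion produces nested $\theta$-weighted convolutions whose structure is encoded by the multiple operator $\rhd_n$. The identities of Lemma \ref{L3:RhdAs}, notably \eqref{E:RhdAs10} and \eqref{E:RhdAs2}, allow me to rewrite the $n$-th iterate as a sum of the form $\sum_{k\le n}(J_0^2 \rhd \widehat{\calL}_k)(t,x) + \Vip^2 \sum_{k\le n}\widehat{\calH}_k(t,x)$, plus the $J_0^2$ prefactor. Assumption \ref{A:Converge} identifies the limits of these sums with $\widehat{\calK}$ and $\widehat{\calH}$ (respectively $\overline{\calK}$ and $\overline{\calH}$ when $p=2$), yielding the upper moment bound \eqref{E:Mom-Up} after passing to the limit. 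For the Cauchy property of $(u_n)$ in $L^p(\Omega)$, I would apply the same BDG-based recursion to the differences $u_{n+1}-u_n$, obtaining by induction and Assumption \ref{A:Gronwall}(ii) a bound of the form $\|u_{n+1}(t,x)-u_n(t,x)\|_p^2 \le C(t,x)\, B_n(t;\, a_{p,\Vip} z_p \Lip_\rho)$; the summability of $\sqrt{B_n(t)}$ in Assumption \ref{A:Gronwall}(iii) then promotes this to an $L^p$-limit $u(t,x)$, which inherits adaptedness and joint measurability from the iterates and satisfies \eqref{E:Int}. Uniqueness in the sense of versions is obtained by applying the same recursion to the difference of two candidate solutions and iterating the resulting Gronwall-type inequality until the difference vanishes.

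To establish the $L^p(\Omega)$-continuity required in Definition \ref{D3:Solution}, I would split $I(t',x')-I(t,x)$ into two Walsh integrals, one on the common strip $[0,t_*]\times\R^d$ and one on the slice $[t_*,\hat t]\times\R^d$ in the notation of \eqref{E:txtx}, and bound each in $L^p$ by BDG. Under \conRef{W}, Assumption \ref{A:Cont-Bdd} supplies the dominating function $G(t+1-s,x-y)$ on the parameter box $B_{t,x,\beta,\tau,\alpha}$, so dominated convergence combined with \ref{A:R-G}(ii) yields the required continuity. Under \conRef{H}, Assumption \ref{A:Cont-Tail} handles the spatial tail, Assumption \ref{A:Continuous} in the explicit form \eqref{E:A-Cont} controls the main term on compact sets, and \ref{A:InDt-Bdd} provides the uniform bound on $J_0$ needed for the $\Vip^2$ contribution. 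The two-point upper bound \eqref{E:TP-Up} then follows from Walsh's isometry applied to $\E[I(t,x)I(t,y)]$ together with \eqref{E:Mom-Up}. The lower bounds \eqref{E:SecMom-Lower} and \eqref{E:TP-Lower} of part (4) are obtained by running the same scheme with inequalities reversed, using \eqref{E:lingrow} and the fact that Walsh's isometry is an equality at the level of second moments. In the quasi-linear case \eqref{E:qlinear}, the isometry produces exact identities at each Picard step, so the upper and lower chains coincide and the equalities \eqref{E:SecMom}--\eqref{E:TP} follow.

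The main obstacle I anticipate is the bookkeeping of the non-associative $\theta$-weighted convolutions $\rhd_n$: iterating the Picard recursion does not directly produce the clean expression $J_0^2 \rhd \calK$, so repeated and careful applications of Lemma \ref{L3:RhdAs} are needed to reindex the iterates into a form where Assumption \ref{A:Converge} can be invoked. A secondary technical hurdle is the $L^p(\Omega)$-continuity argument, whose proof must branch between \conRef{W} and \conRef{H} because the behavior of $G$ near the diagonal $s=t'$ is qualitatively different in the wave and heat cases; the two continuity hypotheses on $G$ are tailored to these two behaviors and resist merging into a single unified argument.
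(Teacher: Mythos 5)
Your proposal follows the same route as the paper: a Picard iteration started from $u_0=J_0$, BDG-based recursive moment bounds unrolled via the identities of Lemma \ref{L3:RhdAs} into sums of $\rhd_k$ convolutions identified with $\calK$ and $\calH$ through Assumption \ref{A:Converge}, the Cauchy property from the summability of $\sqrt{B_n(t)}$ in Assumption \ref{A:Gronwall}, and the $L^p(\Omega)$-continuity argument branching between conditions \conRef{W} and \conRef{H} exactly as in Proposition \ref{P3:Picard}. The paper's proof is precisely this scheme (carried over from the stochastic heat equation paper with the substitutions it lists), so your outline is correct and essentially identical in approach.
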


We now present an assumption that will imply H\"older continuity of the
stochastic integral part of the solution $u$ of \eqref{E:Int}.

\begin{assumption}(Sufficient conditions for H\"older continuity)
\label{A:Holder}
Given $J_0(t,x)$ and $v\in\R$, assume that there are $d+1$ constants $\gamma_i
\in\; ]0,1]$, $i=0,\dots,d$ such that for all $n>1$,
one can find a finite constant $C_{n}<+\infty$
such that for all $(t,x)$ and
$\left(t',x'\right)\in K_n:= [1/n,n]\times [-n,n]^d$ with $t<t'$, we
have that
\begin{multline}
\label{E:H-135}
 \iint_{\R_+\times\R^d}\ud s\ud y\:
\left(v^2+2J_0^2\left(s,y\right)\right)
\left(G\left(t-s,x-y\right)-G(t'-s,x'-y)
\right)^2 \theta^2\left(s,y\right)\\
\le
C_n \: \tau_{\gamma_0,\dots,\gamma_d}\left((t,x),\left(t',x'\right)\right),
\end{multline}
and
\begin{multline}\label{E:H-246}
 \iint_{\R_+\times\R^d}\ud s\ud y\:
\left(\left(v^2+2J_0^2\right)\rhd G^2 \right)\left(s,y\right)
\left(G\left(t-s,x-y\right)-G(t'-s,x'-y)
\right)^2 \theta^2\left(s,y\right)\\
\le
C_n \: \tau_{\gamma_0,\dots,\gamma_d}\left((t,x),\left(t',x'\right)\right)\;,
\end{multline}
where $\tau_{\gamma_0,\dots,\gamma_d}\left((t,x),\left(t',x'\right)\right):=
\left|t-t'\right|^{\gamma_0} + \sum_{i=1}^d \left|x_i-x_i'\right|^{\gamma_i}$.
\end{assumption}

The following lemma is useful for verifying Assumption \ref{A:Holder}.
Its proof is straightforward and we leave it to the interested reader.
\begin{lemma}\label{L3:Holder-Equiv}
Assumption \ref{A:Holder} is equivalent to the following statement:
Given $J_0$ and $v\in\R$, assume that there are $d+1$ constants
$\gamma_i \in \: ]0,1]$, $i=0,\dots,d$ such that for all $n>1$,
one can find six finite constants $C_{n,i}<+\infty$, $i=1,\dots,6$,
such that for all $(t,x)$ and $\left(t+h,x+z\right)\in
K_n:=[1/n,n]\times [-n,n]^d$
with $h> 0$, we have,
\begin{gather}\label{E:H-1}
 \left(\left(v^2+2J_0^2\right)\rhd \left(G(\cdot,\circ) -
G(\cdot+h,\circ)\right)^2\right)(t,x)  \le C_{n,1} \: h^{\gamma_0}\;,\\
\label{E:H-3}
\left(\left(v^2+2J_0^2\right)\rhd \left(G(\cdot,\circ) -
G(\cdot,\circ+z)\right)^2\right)(t,x)  \le C_{n,3} \sum_{i=1}^d
|z_i|^{\gamma_i}\;,\\
\label{E:H-5}
 \iint_{[t,t+h]\times\R^d}\ud u\ud y \:
\left(v^2+2J_0^2(u,y)\right)\! G^2(t+h-u,x+z-y) \theta^2(u,y) \le
C_{n,5} \: h^{\gamma_0},\\
\notag
\left(\left[\left(v^2+2J_0^2\right)\rhd G^2\right] \rhd \left(G(\cdot,\circ) -
G(\cdot+h,\circ)\right)^2\right)(t,x)  \le C_{n,2}\: h^{\gamma_0}\;,\\ \notag
\left(\left[\left(v^2+2J_0^2\right)\rhd G^2 \right]\rhd \left(G(\cdot,\circ) -
G(\cdot,\circ+z)\right)^2\right)(t,x) \le C_{n,4} \sum_{i=1}^d
|z_i|^{\gamma_i}\;,\\ \notag
 \iint_{[t,t+h]\times\R^d}\ud u\ud y
\left(\left(v^2+2J_0^2\right) \rhd G^2 \right)(u,y)\; G^2(t+h-u,x+z-y)
\theta^2(u,y)   \le
C_{n,6} \: h^{\gamma_0}.
\end{gather}
\end{lemma}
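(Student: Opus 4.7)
The plan is to prove both directions via the elementary triangle inequality $(a-b)^2 \le 2(a-c)^2 + 2(c-b)^2$ with an appropriate intermediate $c$, combined with a splitting of the $s$-integration at $s=t$ that exploits the convention $G(t-s,\cdot) \equiv 0$ for $s > t$.

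For the direction Lemma \ref{L3:Holder-Equiv} $\Rightarrow$ Assumption \ref{A:Holder}, I would take $a = G(t-s,x-y)$, $b = G(t'-s,x'-y)$, and $c = G(t-s,x'-y)$, which decomposes the left-hand side of \eqref{E:H-135} into a purely spatial-increment piece and a purely time-increment piece. After multiplying by $(v^2 + 2J_0^2(s,y))\theta^2(s,y)$ and integrating, the spatial piece has vanishing integrand for $s > t$ (both $G$'s are zero) and so equals twice the left-hand side of \eqref{E:H-3} at base point $(t,x)$ with increment $z = x' - x$, bounded by $2 C_{n,3}\sum_i |x_i - x'_i|^{\gamma_i}$. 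The time piece, split at $s=t$, gives twice the left-hand side of \eqref{E:H-1} at base point $(t,x')$ with time increment $h = t'-t$, plus twice the left-hand side of \eqref{E:H-5} (the overflow $s \in [t,t']$ on which only the later $G$ contributes), bounded by $2(C_{n,1}+C_{n,5})(t'-t)^{\gamma_0}$. Summing yields \eqref{E:H-135} with $C_n = 2(C_{n,1}+C_{n,3}+C_{n,5})$; the identical argument with $((v^2+2J_0^2)\rhd G^2)$ replacing $(v^2+2J_0^2)$ and using \eqref{E:H-2}, \eqref{E:H-4}, \eqref{E:H-6} in place of \eqref{E:H-1}, \eqref{E:H-3}, \eqref{E:H-5} yields \eqref{E:H-246}.

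For the converse, \eqref{E:H-1} and \eqref{E:H-5} follow directly by specializing \eqref{E:H-135} with $x' = x$ and $t' = t+h$: the nonnegative integrand splits cleanly into $[0,t]$ (the left-hand side of \eqref{E:H-1}) and $[t,t+h]$ (the left-hand side of \eqref{E:H-5}), each bounded by the total $C_n h^{\gamma_0}$; similarly \eqref{E:H-2} and \eqref{E:H-6} come from \eqref{E:H-246}. For \eqref{E:H-3} and \eqref{E:H-4}, the value $t' = t$ is forbidden by the strict inequality $t < t'$ in Assumption \ref{A:Holder}, so I use the triangle inequality with intermediate $c = G(t+h-s, x+z-y)$ for arbitrary $h>0$: extending the two resulting $[0,t]$ integrals to $[0,t+h]$ by nonnegativity of the integrands bounds the left-hand side of \eqref{E:H-3} by \eqref{E:H-135} evaluated at $((t,x),(t+h,x+z))$ plus \eqref{E:H-135} evaluated at $((t,x+z),(t+h,x+z))$, hence by $4C_n h^{\gamma_0} + 2C_n \sum_i |z_i|^{\gamma_i}$; letting $h \downarrow 0$ produces \eqref{E:H-3}, and \eqref{E:H-4} follows identically from \eqref{E:H-246}. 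The argument is essentially bookkeeping via triangle inequality and domain splitting, and the only mildly delicate step is this final $h \downarrow 0$ limit, which is immediate since the left-hand side of \eqref{E:H-3} does not depend on $h$.
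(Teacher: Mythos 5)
The paper gives no proof of this lemma (it is explicitly left to the reader as ``straightforward''), and your argument is exactly the intended one: the elementary inequality $(a-b)^2\le 2(a-c)^2+2(c-b)^2$ with the intermediate kernel evaluated at mixed arguments, combined with splitting the time integral at $s=t$ and using the convention $G(t-s,\cdot)\equiv 0$ for $s\ge t$; both directions check out, including the mildly delicate $h\downarrow 0$ limit needed to extract the pure space-increment bound \eqref{E:H-3} from \eqref{E:H-135}. Two cosmetic points: to obtain \eqref{E:H-5} for a general spatial shift $z$ (not only $z=0$) you should apply \eqref{E:H-135} at the pair $\left((t,x+z),(t+h,x+z)\right)$ rather than specializing to $x'=x$, which is legitimate since $(t,x+z)\in K_n$ whenever $(t,x)$ and $(t+h,x+z)$ are; and the equation labels you cite for the three conditions weighted by $\left(v^2+2J_0^2\right)\rhd G^2$ do not exist in the paper (those displays are unnumbered), so they should be referred to descriptively.
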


\begin{theorem}\label{T3:Holder}
Suppose that the conditions of Theorem \ref{T3:ExUni} hold. If, in addition,
Assumption \ref{A:Holder} is also satisfied, then for all compact sets
$K\subseteq \R_+^*\times\R^d$ and all $p\ge
1$, there is a constant $C_{K,p}$ such that for all $(t,x)$, $(t',x')\in K$,
\[
\Norm{I(t,x)-I(t',x')}_p\le C_{K,p}
\left[\tau_{\gamma_0,\dots,\gamma_d}\left((t,x),(s,y)\right)\right]^{1/2},
\]
and therefore $(t,x)\mapsto I(t,x)$ belongs to 
$C_{
\frac{\gamma_0}{2}-,\frac{\gamma_1}{2}-,\dots,\frac{\gamma_d}{2}-}
\left(\R_+^*\times\R^d\right)$ a.s. In addition, for $0\le \alpha <
1/2-(1/p)\sum_{i=0}^d \gamma_i^{-1}$,
\[
\E\left[\left(
\mathop{\sup_{(t,x),\: (s,y)\in K}}_{(t,x)\ne (s,y)}
\frac{|I(t,x)-I(s,y)|}{\left[\tau_{\gamma_0,\dots,\gamma_d}((t,x),(s,y))\right]
^\alpha } \right)^p\;\right ]<+\infty.
\]
Moreover, if the compact sets $K_n$ in Assumption \ref{A:Holder} can be chosen
as $[0,n]\times [-n,n]^d$, then
$I(t,x)\in
C_{
\frac{\gamma_0}{2}-,\frac{\gamma_1}{2}-,\dots,\frac{\gamma_d}{2}-}
\left(\R_+\times\R^d\right)$ a.s.
\end{theorem}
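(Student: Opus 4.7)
The plan is to combine the Burkholder--Davis--Gundy (BDG) inequality with the moment bound from Theorem \ref{T3:ExUni} and Assumption \ref{A:Holder} to derive the $L^p$-increment bound stated in the theorem, and then to invoke a multi-parameter Kolmogorov continuity theorem together with a Garsia--Rodemich--Rumsey estimate. First, writing $I(t',x')-I(t,x)$ as a single Walsh stochastic integral with integrand $\left(G(t'-s,x'-y)-G(t-s,x-y)\right)\theta(s,y)\rho(u(s,y))$ (using the convention $G(s,\cdot)\equiv 0$ for $s\le 0$), BDG yields
\[
\Norm{I(t,x)-I(t',x')}_p^2 \le z_p^2\iint\theta^2(s,y)\left(G(t-s,x-y)-G(t'-s,x'-y)\right)^2\Norm{\rho(u(s,y))}_p^2\ud s\, \ud y,
\]
and the growth condition \eqref{E:LinGrow} together with Minkowski's inequality on $L^p(\Omega)$ yields $\Norm{\rho(u(s,y))}_p^2\le \Lip_\rho^2\bigl(\Vip^2+\Norm{u(s,y)}_p^2\bigr)$. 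For $p\in[1,2)$ one just uses $\Norm{\cdot}_p\le \Norm{\cdot}_2$ to reduce to $p=2$.

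The crux, and the step I expect to be the most delicate, is to dominate $\Vip^2+\Norm{u(s,y)}_p^2$ by the precise expression $C_n\bigl[(v^2+2J_0^2)(s,y)+((v^2+2J_0^2)\rhd G^2)(s,y)\bigr]$ (with $v=\Vip$) that the left-hand sides of \eqref{E:H-135} and \eqref{E:H-246} are designed to absorb. The bound \eqref{E:Mom-Up} controls $\Norm{u(s,y)}_p^2$ in terms of $J_0^2$, $J_0^2\rhd\widehat{\calK}$ and $\Vip^2\,\widehat{\calH}$; invoking \eqref{E:KUP} from Assumption \ref{A:Gronwall} gives $\widehat{\calK}(s,y;\sigma,z)\le \Upsilon(s)\widehat{\calL}_0(\sigma,z)\le C_n G^2(\sigma,z)$ uniformly for $s$ in any compact $K_n$, and the analogous estimate holds for $\widehat{\calH}$. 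Using the linearity of $\rhd$ to pool the $\Vip^2(1\rhd G^2)$ and $2J_0^2\rhd G^2$ contributions then yields the required pointwise domination. Substituting back into the BDG estimate and invoking both parts of Assumption \ref{A:Holder} produces
\[
\Norm{I(t,x)-I(t',x')}_p \le C_{K,p}\left[\tau_{\gamma_0,\ldots,\gamma_d}((t,x),(t',x'))\right]^{1/2}.
\]

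The remaining conclusions follow from a multi-parameter Kolmogorov continuity theorem applied with respect to the quasi-metric $\tau_{\gamma_0,\ldots,\gamma_d}$. Since a $\tau$-ball of radius $r$ in $\R_+\times\R^d$ has Lebesgue measure of order $r^N$ with $N:=\sum_{i=0}^{d}\gamma_i^{-1}$, one obtains an a.s. modification that is $\alpha$-H\"older with respect to $\tau_{\gamma_0,\ldots,\gamma_d}$ for every $\alpha<1/2-N/p$; specialising to coordinate lines yields H\"older exponent $\alpha\gamma_i$ in the $i$-th variable, and letting $p\to\infty$ delivers membership of $(t,x)\mapsto I(t,x)$ in $C_{\gamma_0/2-,\ldots,\gamma_d/2-}(\R_+^*\times\R^d)$ a.s. The quantitative supremum bound is the standard output of the Garsia--Rodemich--Rumsey lemma applied with Young function $\Psi(x)=|x|^p$ and the gauge function $[\tau_{\gamma_0,\ldots,\gamma_d}]^{\alpha}$. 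Finally, if Assumption \ref{A:Holder} can be verified on the enlarged compacts $[0,n]\times[-n,n]^d$, the same argument applies verbatim and extends the H\"older property up to the boundary $t=0$.
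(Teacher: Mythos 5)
Your proposal is correct and is essentially the argument the paper intends: the paper's own proof simply defers to part (1) of Theorem 3.1 of \cite{ChenDalang13Holder} (with Kunita's theorem for the range of $\alpha$), and the two conditions \eqref{E:H-135}--\eqref{E:H-246} of Assumption \ref{A:Holder} are designed precisely to absorb the two terms $\Vip^2+2J_0^2$ and $(\Vip^2+2J_0^2)\rhd G^2$ that you obtain after combining BDG with \eqref{E:Mom-Up} and the uniform bound $\widehat{\calK}(t,x;s,y)\le\Upsilon(t)\widehat{\calL}_0(s,y)$ from \eqref{E:KUP}. The concluding Kolmogorov/Garsia--Rodemich--Rumsey step with the anisotropic gauge $\tau_{\gamma_0,\dots,\gamma_d}$ matches the cited references, so no gap remains.
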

\begin{proof}
With Propositions 4.4 and 4.5 of \cite{ChenDalang13Holder} replaced by
Assumption \ref{A:Holder} (or equivalently Lemma \ref{L3:Holder-Equiv}),
the proof is identical to part (1) of Theorem 3.1 in \cite{ChenDalang13Holder}.
For the range of the parameter $\alpha$, see \cite[Theorem 1.4.1]{Kunita90Flow}.
\end{proof}

\subsubsection{Some lemmas and propositions}
Following \cite{Walsh86}, a random field $\{Z(t,x)\}$ is called {\em elementary}
if we can write
$Z(t,x)=Y 1_{]a,b]}(t) 1_{A}(x)$, where $0\le a<b$, $A\subset \R^d$ is a
rectangle, and $Y$ is an $\calF_a$--measurable random variable. A {\it simple}
process is a finite sum of elementary random fields. The set of simple processes
generates the {\em predictable} $\sigma$-field on $\R_+\times\R^d\times\Omega$,
denoted by $\calP$. For $p\ge 2$ and
$X\in L^2\left(\R_+\times\R^d, L^p(\Omega)\right)$, set
\begin{align}\label{E:Norm_Mp}
 \Norm{X}_{M, p}^2 & :
= \iint_{\R_+^*\times\R^d}\ud s\ud y \Norm{X\left(s,y\right)}_p^2<+\infty\;.
\end{align}
When $p=2$, we write $\Norm{X}_M$ instead of $\Norm{X}_{M,2}$.
As pointed out in \cite{ChenDalang13Heat}, $\iint X\ud W$ is defined in
\cite{Walsh86} for predictable $X$
such that $\Norm{X}_M<+\infty$. However, the condition of predictability is not
always so easy to check, and as in the case of ordinary Brownian motion
\cite[Chapter 3]{ChungWilliams90}, it is convenient to be able to integrate
elements $X$ that are jointly measurable and adapted. For this, let $\calP_p$
denote the closure in $L^2\left(\R_+\times\R^d, L^p(\Omega)\right)$ of simple
processes.
Clearly, $\calP_2\supseteq \calP_p \supseteq \calP_q$ for $2\le p\le q<+\infty$,
and according to \Itos isometry, $\iint X\ud W$ is well-defined for all
elements of $\calP_2$. The next two propositions give easily verifiable
conditions for checking that $X\in\calP_2$.

\begin{proposition}\label{P3:Pm-Ext}
Suppose that for some $t>0$ and $p\in [2,+\infty[\:$, a random field
$X=\left\{ X\left(s,y\right): \left(s,y\right)\in \;]0,t[\times \R^d \right\}$
has the following properties:
\begin{itemize}
 \item[(i)] $X$ is adapted and jointly measurable with respect to
$\calB\left(\R^{1+d}\right)\times\calF$;
 \item[(ii)] $\Norm{X(\cdot,\circ) \: 1_{]0,t[}(\cdot)}_{M,p}<+\infty$.
\end{itemize}
Then $X(\cdot,\circ)\: 1_{]0,t[}(\cdot)$ belongs to $\calP_2$.
\end{proposition}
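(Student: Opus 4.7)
The plan is to produce an approximating sequence of simple processes (in the sense of Walsh) that converges to $\tilde X := X(\cdot,\circ)\,1_{]0,t[}(\cdot)$ in the $\|\cdot\|_{M,2}$ norm, thereby placing $\tilde X$ in $\calP_2$. As a preliminary observation, since $p \ge 2$ and $P$ is a probability measure, Jensen's inequality gives $\|\tilde X(s,y)\|_2 \le \|\tilde X(s,y)\|_p$ pointwise, so hypothesis (ii) yields $\|\tilde X\|_{M,2} \le \|\tilde X\|_{M,p} < +\infty$. Consequently, $\tilde X$ lies in the Hilbert space $L^2(\R_+\times\R^d\times\Omega)$, in which $\calP_2$ is the closure of the simple processes.

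The construction I would use proceeds in three successive regularization stages, combined at the end by a diagonal argument. First, I would truncate, setting $\tilde X_N(s,y) := \tilde X(s,y)\,1_{\{|\tilde X(s,y)|\le N\}}\,1_{[-N,N]^d}(y)$, which remains adapted and jointly measurable and converges to $\tilde X$ in $\|\cdot\|_{M,2}$ by dominated convergence. Second, I would mollify in time via
\[
X_{N,h}(s,y,\omega) := \frac{1}{h}\int_{(s-h)^+}^{s} \tilde X_N(r,y,\omega)\,\ud r,
\]
which is the crucial step: integration over the past preserves adaptedness (so $X_{N,h}(s,\cdot,\cdot)$ is $\calF_s$-measurable), joint measurability is preserved by Fubini, and $s\mapsto X_{N,h}(s,y,\omega)$ becomes Lipschitz in $s$. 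The convergence $X_{N,h} \to \tilde X_N$ in $\|\cdot\|_{M,2}$ as $h\downarrow 0$ follows from the $L^2$-continuity of translation combined with Fubini, applied on the product space $\R_+\times\R^d\times\Omega$ on which $\tilde X_N$ is bounded and compactly supported.

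Third, I would discretize each $X_{N,h}$ in time along a partition $0=s_0<s_1<\cdots<s_M = t+1$ of mesh $\delta$ by
\[
X_{N,h}^{(\delta)}(s,y) := \sum_{i=0}^{M-1} X_{N,h}(s_i,y)\,1_{]s_i,s_{i+1}]}(s);
\]
each $X_{N,h}(s_i,\cdot,\cdot)$ is $\calF_{s_i}$-measurable by construction of the past-average, and Lipschitz continuity in $s$ combined with dominated convergence yields $X_{N,h}^{(\delta)} \to X_{N,h}$ in $\|\cdot\|_{M,2}$ as $\delta\downarrow 0$. Finally, for each fixed $i$, the random field $(y,\omega)\mapsto X_{N,h}(s_i,y,\omega)$ lies in $L^2(\R^d\times\Omega, \calB(\R^d)\otimes\calF_{s_i})$ with compact $y$-support, so it can be approximated in $L^2$ by finite sums $\sum_j Y_{i,j}\,1_{A_{i,j}}(y)$ with $Y_{i,j}$ being $\calF_{s_i}$-measurable and $A_{i,j}\subset\R^d$ a bounded rectangle; substituting these approximations into the formula above produces a genuine simple process in the sense of Walsh, and a diagonal extraction over the parameters $(N,h,\delta)$ and the spatial approximations yields the desired sequence converging to $\tilde X$ in $\|\cdot\|_{M,2}$.

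The main obstacle will be justifying the time-mollification step, specifically the adaptedness and joint measurability of $X_{N,h}$ together with its $\|\cdot\|_{M,2}$ convergence to $\tilde X_N$; both rely on combining joint measurability hypothesis (i) with Fubini's theorem and an $L^2$-version of the Lebesgue differentiation theorem. Once these are in place, the temporal and spatial discretizations in the third stage reduce to standard density arguments in Hilbert space and carry no further probabilistic content.
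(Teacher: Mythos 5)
The paper does not actually prove this proposition — it cites Proposition 2.12 of the companion heat-equation paper \cite{ChenDalang13Heat}, whose proof is precisely the classical truncate / mollify-over-the-past / discretize argument you describe, so your proposal is correct and takes essentially the same route. The only step where your justification is too quick is the $\calF_s$-measurability of $X_{N,h}(s,\cdot,\cdot)$: Fubini applied to a $\calB(\R_+\times\R^d)\times\calF$-measurable field only gives that $\int_{(s-h)^+}^{s}\tilde X_N(r,y,\cdot)\,\ud r$ is $\calF$-measurable, and to upgrade this to $\calF_s$-measurability you need to replace $\tilde X_N$ by a progressively measurable modification (which exists by the Chung--Doob theorem, using adaptedness plus joint measurability) and invoke the fact that the filtration here is augmented by the $P$-null sets — a standard but genuinely necessary extra lemma that ``Fubini'' alone does not supply.
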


This proposition is taken from \cite[Proposition
\myRef{2.12}{P2:Pm-Ext}]{ChenDalang13Heat}, with $\R$ there replaced by $\R^d$.

\begin{lemma}
\label{L3:Lp}
Let $\calG(s,y)$ be a deterministic measurable function from $\R_+^*\times\R^d$
to $\R$ and let $Z=\left(Z\left(s,y\right):\left(s,y\right)\in
\R_+^*\times \R^d\right)$ be a process such that
\begin{enumerate}[(1)]
 \item $Z$ is adapted and jointly measurable with respect to
$\calB\left(\R^{1+d}\right)\times\calF$,
\item $\Norm{\calG^2(t-\cdot,x-\circ)Z(\cdot,\circ)}_{M,2}<+\infty$
for all $(t,x) \in\R_+^*\times\R^d$.
\end{enumerate}
Then for each $(t,x)\in\R_+\times\R^d$, the random field $\left(s,y\right)\in
[0,t]\times\R^d \mapsto \calG\left(t-s,x-y\right) Z\left(s,y\right)$ belongs to
$\calP_2$ and so the stochastic convolution
\begin{align}
\label{E:StoCon}
\left(\calG\star Z\dot{W}\right)(t,x) := \iint_{[0,t]\times\R^d}
\calG\left(t-s,x-y\right)Z\left(s,y\right)W\left(\ud s,\ud y\right)
\end{align}
is a well-defined Walsh integral and the random field $\calG\star Z\W$ is
adapted.
Moreover, for all even integers $p\ge 2$, and all $(t,x)\in\R_+ \times\R^d$,
\[
\Norm{\left(\calG\star
Z\dot{W}\right)(t,x)}_p^2
\le z_{p}^{2} \Norm{\calG(t-\cdot,x-\circ) Z(\cdot,\circ)}_{M,p}^2.
\]
\end{lemma}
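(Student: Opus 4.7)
The plan proceeds in two main steps. First, I would verify that the integrand $X(s,y):=\calG(t-s,x-y)\, Z(s,y)\, 1_{]0,t[}(s)$ lies in $\calP_2$ by invoking Proposition \ref{P3:Pm-Ext} with $p=2$. Joint measurability and adaptedness of $X$ follow immediately from the deterministic Borel measurability of $\calG$ together with property (1) for $Z$, and finiteness of the requisite $M,2$-norm on $]0,t[\times\R^d$ is a direct consequence of hypothesis (2). Once $X\in\calP_2$, the Walsh integral $\iint X\,\ud W$ in \eqref{E:StoCon} is well-defined. Its $\calF_t$-measurability, and hence adaptedness of $(t,x)\mapsto (\calG\star Z\dot W)(t,x)$, is inherited from the elementary integrands used to define the Walsh integral, since any integrand supported on $[0,t]\times\R^d$ integrates to an $\calF_t$-measurable random variable.

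For the $L^p(\Omega)$-bound with even integer $p\ge 2$, I would combine two classical ingredients. Applying the Burkholder--Davis--Gundy inequality for the martingale $r\mapsto \iint_{[0,r]\times\R^d} X\,\ud W$, with the optimal constant $z_p$ cited in the paper from \cite[Theorem 1.4]{ConusKhosh10Farthest}, yields
\begin{equation*}
\E\!\left[|(\calG\star Z\dot W)(t,x)|^p\right]
\le z_p^{p}\, \E\!\left[\left(\iint_{[0,t]\times\R^d} \calG^2(t-s,x-y)\, Z^2(s,y)\,\ud s\,\ud y\right)^{p/2}\right].
\end{equation*}
Since $\calG^2 Z^2\ge 0$ and $p/2\ge 1$, Minkowski's integral inequality in $L^{p/2}(\Omega)$ permits pulling the $L^{p/2}$-norm inside the double integral, yielding
\begin{equation*}
\E\!\left[\left(\iint \calG^2 Z^2\,\ud s\,\ud y\right)^{p/2}\right]
\le \left(\iint \calG^2(t-s,x-y)\,\Norm{Z(s,y)}_p^{2}\,\ud s\,\ud y\right)^{p/2}
= \Norm{\calG(t-\cdot,x-\circ)Z(\cdot,\circ)}_{M,p}^{p}.
\end{equation*}
Chaining the two displays and taking $p$-th roots (then squaring) delivers the stated inequality.

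The argument presents no genuine obstacle: it is essentially a packaging of the Walsh integrability criterion (Proposition \ref{P3:Pm-Ext}), BDG with the optimal constant, and Minkowski's integral inequality. The only point warranting mild care is checking the hypotheses of Minkowski's inequality (non-negativity of the integrand and $p/2\ge 1$), both of which hold exactly for even integers $p\ge 2$; the rest is bookkeeping compatible with the notation \eqref{E:Norm_Mp}.
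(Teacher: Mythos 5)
Your proposal is correct and follows exactly the intended argument: the paper gives no proof here, simply citing \cite[Lemma 2.14]{ChenDalang13Heat}, whose proof is precisely your combination of Proposition \ref{P3:Pm-Ext} (to place the integrand in $\calP_2$ and get a well-defined, adapted Walsh integral) with the Burkholder--Davis--Gundy inequality at the optimal constant $z_p$ followed by Minkowski's integral inequality in $L^{p/2}(\Omega)$. The only cosmetic point is that hypothesis (2) as printed involves $\calG^2$ inside the $\Norm{\cdot}_{M,2}$-norm, which you correctly read as the intended condition $\iint \calG^2(t-s,x-y)\Norm{Z(s,y)}_2^2\,\ud s\,\ud y<+\infty$ needed for the It\^o isometry.
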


This lemma is taken from \cite[Lemma
\myRef{2.14}{L2:Lp}]{ChenDalang13Heat}, again with $\R$ there replaced by
$\R^d$.

\begin{proposition}\label{P3:Picard}
Suppose that for some even integer $p\in [2,+\infty[\:$, a random field
$Y=\left(Y(t,x): (t,x)\in \R_+^*\times \R^d\right)$
has the following properties
\begin{enumerate}[(i)]
 \item $Y$ is adapted and jointly measurable;
\item for all $(t,x)\in\R_+^*\times\R^d$,
$\Norm{Y(\cdot,\circ)\theta(\cdot,\circ)G(t-\cdot,x-\circ)}_{M,p}^2 <+\infty$.
\end{enumerate}
Then for each $(t,x)\in\R_+^*\times\R^d$,
$Y(\cdot,\circ)\theta(\cdot,\circ)G(t-\cdot,x-\circ)\in\calP_{2}$,
the following Walsh integral
\[
w(t,x)=\iint_{]0,t[\times\R^d}
Y\left(s,y\right)\theta(s,y)G\left(t-s,x-y\right)W(\ud s,\ud y)
\]
is well defined and the resulting random field $w$ is
adapted. Moreover, $w$ is $L^p(\Omega))$-continuous
over $\R_+^*\times\R^d$ under either of the following two conditions:
\begin{enumerate}
\item[(\;\Hcd)] (Heat type):
\begin{enumerate}
 \item[(\;\Hcd-i)] $G$ satisfies Assumptions
\ref{A:Cont-Tail} and \ref{A:Continuous}.
\item[(\;\Hcd-ii)] $\sup_{(t,x)\in K}\Norm{Y(t,x)}_p<+\infty$ for all
compact sets $K \subseteq \R_+^*\times\R^d$,
which is true, in particular, if $Y$ is $L^p(\Omega)$-continuous.
\end{enumerate}
\item[(\;\Wcd)] (Wave type) $G$ satisfies Assumptions \ref{A:Cont-Bdd}.
\end{enumerate}
\end{proposition}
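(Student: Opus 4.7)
The plan has three pieces: membership in $\calP_2$, the general $L^p$-estimate, and the two alternative continuity arguments corresponding to (\Hcd) and (\Wcd).

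\textbf{Step 1: Well-definedness.} For fixed $(t,x)\in\R_+^*\times\R^d$ I would apply Proposition~\ref{P3:Pm-Ext} to the process $X(s,y):=Y(s,y)\theta(s,y)G(t-s,x-y)1_{]0,t[}(s)$. Condition (i) of that proposition is immediate from our hypothesis (i) and the fact that $\theta$ and $G$ are deterministic Borel functions, while condition (ii) is precisely the finiteness assumed in our hypothesis (ii). This places $X\in\calP_2$, so the Walsh integral $w(t,x)$ is well defined; adaptedness follows because $X$ is adapted and the Walsh integral of an adapted predictable-closure element is $\calF_t$-measurable.

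\textbf{Step 2: Reducing continuity to a deterministic estimate.} For $(t,x),(t',x')\in\R_+^*\times\R^d$ I would write $w(t',x')-w(t,x)$ as a single Walsh integral against the kernel
$$\Delta(s,y):= G(t'-s,x'-y)1_{]0,t'[}(s)- G(t-s,x-y)1_{]0,t[}(s),$$
and apply Lemma~\ref{L3:Lp} to obtain
$$\Norm{w(t',x')-w(t,x)}_p^2\;\le\; z_p^{2}\iint\ud s\,\ud y\;\Norm{Y(s,y)}_p^2\,\theta^2(s,y)\,\Delta^2(s,y).$$
Using the notation \eqref{E:txtx}, the r.h.s.\ splits, just as in \eqref{E:A-Cont}, into an ``interior'' part on $[0,t_*]\times\R^d$ with integrand $\theta^2\Norm{Y}_p^2\bigl(G(t'-s,x'-y)-G(t-s,x-y)\bigr)^2$, and a ``boundary'' part on $[t_*,\hat t]\times\R^d$ with integrand $\theta^2\Norm{Y}_p^2 G^2(\hat t-s,\hat x-y)$. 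Both must be shown to tend to $0$ as $(t',x')\to(t,x)$.

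\textbf{Step 3: Wave case.} Under (\Wcd), Assumption~\ref{A:Cont-Bdd} supplies a single uniform domination $G(t'-s,x'-y)\le C\,G(t+1-s,x-y)$ valid for all $(t',x')$ in the neighborhood $B_{t,x,\beta,\tau,\alpha}$ of $(t,x)$. Hence the integrand appearing in Step 2 is dominated, uniformly in $(t',x')$, by $(1+C)^2\,\theta^2(s,y)\Norm{Y(s,y)}_p^2 G^2(t+1-s,x-y)$, which is integrable by hypothesis (ii) applied at the shifted point $(t+1,x)$. Assumption~\ref{A:R-G}(ii) gives pointwise a.e.\ convergence of the kernel difference to $0$, and the dominated convergence theorem closes the argument for both the interior and the boundary pieces.

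\textbf{Step 4: Heat case, and the main obstacle.} Under (\Hcd) no single uniform bound on $G(t'-s,x'-y)$ is available, so I would split the $y$-integration into the tail $\{|y|>a\}$ and the near-field $\{|y|\le a\}$, where $a$ is provided by Assumption~\ref{A:Cont-Tail}. On the tail, Assumption~\ref{A:Cont-Tail} plays exactly the role of Assumption~\ref{A:Cont-Bdd} above and the wave-case argument applies verbatim. On the near-field, I would use local boundedness (\Hcd-ii) to bound $\Norm{Y(s,y)}_p^2$ by a constant $M$ on a compact subset of $\R_+^*\times\R^d$ containing $\{(s,y):\epsilon\le s\le \hat t,\,|y|\le a\}$, and then Assumption~\ref{A:Continuous} (restricted to this near-field, legitimate since the integrand is nonnegative) forces the kernel-difference contribution to vanish. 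The tail-in-time residue $s\in[0,\epsilon]$ is absorbed using the finiteness of $\Norm{X}_{M,p}^2$ from hypothesis (ii) together with the uniform tail domination provided by Assumption~\ref{A:Cont-Tail}, sending $\epsilon\downarrow 0$. The main obstacle is precisely this step: local $L^p$-boundedness of $Y$ is known only on compact subsets of $\R_+^*\times\R^d$, bounded away from $s=0$, whereas the stochastic integral reaches down to $s=0$. Coordinating the spatial split with the absolute continuity of $\iint \theta^2\Norm{Y}_p^2 G^2 \,\ud s\,\ud y$ near $s=0$ is the delicate point.
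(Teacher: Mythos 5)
Your Steps 1--3 coincide with the paper's own proof. Step 1 is exactly the paper's argument (Proposition \ref{P3:Pm-Ext} applied to $X(s,y)=Y(s,y)\theta(s,y)G(t-s,x-y)$ on $]0,t[\,\times\R^d$), and Step 3 is, up to the value of the constant, the paper's proof of the (\Wcd) case: Lemma \ref{L3:Lp}, the $(t_*,\hat{t})$ splitting of \eqref{E:txtx}, domination of both pieces by a constant times $\theta^2\Norm{Y}_p^2\,G^2(t+1-\cdot,x-\circ)$ via Assumption \ref{A:Cont-Bdd}, integrability of the dominating function from hypothesis (ii) applied at $(t+1,x)$, pointwise convergence from Assumption \ref{A:R-G}(ii), and dominated (resp.\ monotone) convergence. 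The only cosmetic difference is that the paper estimates the two pieces as two separate Walsh integrals and recombines them with a factor $2^{p-1}$, whereas you integrate a single kernel $\Delta$; these are equivalent.

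For the (\Hcd) case the paper gives no self-contained argument: it states that the proof is identical to that of Proposition 2.15 of \cite{ChenDalang13Heat}, with the appeal to Proposition 2.18 there replaced by Assumption \ref{A:Cont-Tail}. Your Step 4 reconstructs that argument along the right lines (tail $|y|\ge a$ handled by Assumption \ref{A:Cont-Tail} plus dominated convergence; near field handled by (\Hcd-ii) plus Assumption \ref{A:Continuous}), but the point you yourself flag as delicate is not closed as written. On the region $s\in[0,\epsilon]$, $|y|\le a$, neither of the tools you invoke applies: Assumption \ref{A:Cont-Tail} is silent for $|y|\le a$, and (\Hcd-ii) gives no bound on $\Norm{Y(s,y)}_p$ as $s\downarrow 0$ (in the intended application $Y=\rho(u_{k-1})$ and $J_0$ may blow up at $t=0$). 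Hypothesis (ii) does give smallness of $\iint_{[0,\epsilon]}\Norm{Y}_p^2\theta^2G^2(t-\cdot,x-\circ)$ as $\epsilon\downarrow 0$, but one must still transfer this to the kernel at the perturbed point, uniformly for $(t',x')$ near $(t,x)$; that requires comparing $G(t'-s,x'-y)$ with $G(t-s,x-y)$ for small $s$, which holds for the heat kernel because $t'-s$ and $t-s$ are then bounded away from $0$, but is not among the abstract assumptions you are allowed to use. So this one sub-case needs an explicit additional estimate; everything else in your proposal is the paper's proof.
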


\begin{proof}
Fix $(t,x)\in\R_+^*\times\R^d$. By Assumption (iii) and the fact that
$G(t,x)$ is Borel measurable and
deterministic, the random field
$X=\left(X\left(s,y\right):\:\left(s,y\right)\in\;]0,t[\times\R^d\right)$
with
$X\left(s,y\right):=Y\left(s,y\right)\theta\left(s,y\right)G\left(t-s,
x-y\right)$
satisfies all conditions of Proposition \ref{P3:Pm-Ext}. This implies
that $Y(\cdot,\circ)\theta\left(\cdot,\circ\right)G(t-\cdot,x-\circ)\in
\calP_p$.
Hence $w(t,x)$ is a well-defined Walsh integral and the resulting random field
is adapted to the filtration $\{\calF_s\}_{s\ge 0}$.

Under condition (\Hcd), the proof is identical to that of
\cite[Proposition 2.15]{ChenDalang13Heat}, except that appeals there to
Proposition 2.18 are replaced by appeals to Assumption \ref{A:Cont-Tail}.

Assume condition (\Wcd).
For two points $(t,x), \left(t',x'\right)\in \R_+\times\R^d$, recall $(t_*,x_*)$
and $(\hat{t},\hat{x})$ are defined in \eqref{E:txtx}.
Choose $\beta\in \;]0,1[\:$, $\tau>0$ and $\alpha>0$
according to Assumption \ref{A:Cont-Bdd}.
Fix $(t,x)\in\R_+^*\times\R^d$.
Let $B:=B_{t,x,\beta,\tau,\alpha}$ be the set defined in \eqref{E:B}
and $C$ be the constant used in Assumption \ref{A:Cont-Bdd}.
Assume that $\left(t',x'\right)\in B$.
By Lemma \ref{L3:Lp}, we have that
\begin{align}
 \notag& \Norm{w(t,x)-w\left(t',x'\right)}_p^p \\
&\qquad\le\notag
2^{p-1} z_p^p \left(\int_0^{t_*}\ud s\int_{\R^d} \ud y
 \Norm{Y\left(s,y\right)}_p^2 \:\theta(s,y)^2
\left(G(t-s, x-y)-G(t'-s , x'-y)\right)^2 \right)^{p/2}\\
&\qquad\qquad \notag
+2^{p-1} z_p^p \left( \int_{t_*}^{\hat{t}}\ud s
\int_{\R^d}\ud y\:
\Norm{Y\left(s,y\right)}_p^2\:\theta(s,y)^2
G^2\left(\hat{t}-s,\hat{x}-y\right)\right)^{p/2}\\
&\qquad \le
  2^{p-1} z_p^p \left(L_1(t,t',x,x')\right)^{p/2}
  +
  2^{p-1} z_p^p \left(L_2(t,t',x,x')\right)^{p/2}\;.
\label{E:L1L2}
\end{align}
We first consider $L_1$.
By Assumption \ref{A:Cont-Bdd},
\[
\left(G\left(t-s,x-y\right)-G\left(t'-s,x'-y\right)\right)^2
\le
4 C^2 G^2\left(t+1-s,x-y\right),
\]
and the left-hand side converges pointwise to $0$ for almost all $(t,x)$.
Further,
\begin{align*}
\iint_{[0,t_*] \times \R^{d}}\ud s\ud y &\:4 C^2
 G^2\left(t+1-s,x-y\right)\Norm{Y\left(s,y\right)}_p^2 \: \theta(s,y)^2\\
\le&  4 C^2 \Norm{Y(\cdot,\circ)\theta(\cdot,\circ)
G(t+1-\cdot,x-\circ)}_{M,p}^2,
\end{align*}
which is finite by (ii).
Hence, by the dominated convergence theorem,
\[\lim_{\left(t',x'\right)\rightarrow (t,x)} L_1(t,t',x,x') =0.\]
Similarly, for $L_2$, by Assumption \ref{A:Cont-Bdd},
\[
G^2\left(\hat{t}-s,\hat{x}-y\right) \le C^2 G^2\left(t+1-s,x-y\right).
\]
By the monotone convergence theorem,
$\lim_{\left(t',x'\right)\rightarrow (t,x)} L_2(t,t',x,x') =0$,
because
\begin{align*}
\iint_{\left[t_*,\hat{t}\right]\times \R^{d}} \ud s\ud y &\:C^2
G^2\left(t+1-s,x-y\right)\Norm{Y(s,y)}_p^2\:\theta(s,y)^2
\\
\le &  C^2
\Norm{Y(\cdot,\circ)\theta(\cdot,\circ)G(t+1-\cdot,x-\circ)}_{M,p}^2
\end{align*}
is finite by (ii). This completes the proof under condition (\Wcd).
\end{proof}

We need a lemma which transforms the stochastic integral equation
\eqref{E:WalshSI} into integral inequalities for its moments.
The proof is similar to that of \cite[Lemma 2.19]{ChenDalang13Heat}.

\begin{lemma}\label{L3:HigherMom}
Suppose that $f(t,x)$ is a deterministic
function and $\rho$ satisfies the growth condition \eqref{E:LinGrow}.
If the random fields
$w$ and $v$ satisfy, for all $(t,x)\in\R_+\times\R^d$,
\begin{gather*}
w\left(t,x\right) = f(t,x) +
\left(G \lhd
\left[\rho(v)\dot{W}\right]\right)\left(t,x\right),
\end{gather*}
in which the second term is defined by
\begin{gather*}
 \left(G \lhd
\left[\rho(v)\dot{W}\right]\right)\left(t,x\right)
:= \int_0^t\int_{\R^d}
G\left(t-s,x-y\right)
\theta\left(s,y\right) \;
\rho\left(v\left(s,y\right)\right)
W\left(\ud s,\ud y\right),
\end{gather*}
where we assume that the Walsh integral is well defined,
then for all
even integers $p\ge 2$ and $(t,x)\in\R_+\times\R^d$,
\begin{align*}
 \Norm{\left(G \lhd
\left[\rho(v)\dot{W}\right]\right)\left(t,x\right)}_{p}^2
&\le z_p^2\Norm{G\left(t-\cdot,x-\circ\right) \rho(v(\cdot,\circ)) \:
\theta\left(\cdot,\circ\right)}_{M,p}^2 \\
&\le
\frac{1}{b_p}\left(\left(\Vip^2+\Norm{v}_p^2\right)\rhd
\widehat{\calL}_0\right)\left(t,x\right),
\end{align*}
where $b_p=1$ if $p=2$ and $b_p=2$ otherwise.
In particular,
\[
\Norm{w\left(t,x\right)}_p^2 \le b_p f^2(t,x) +
\left(\left(\Vip^2+\Norm{v}_p^2\right)\rhd
\widehat{\calL}_0\right)\left(t,x\right).
\]
\end{lemma}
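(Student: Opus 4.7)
The lemma has two essential pieces: the $L^p$-bound on the stochastic convolution itself, and the resulting bound on $\Norm{w(t,x)}_p^2$. The first inequality, $\bigl\|(G\lhd[\rho(v)\dot W])(t,x)\bigr\|_p^2 \le z_p^2\|G(t-\cdot,x-\circ)\rho(v)\theta\|_{M,p}^2$, is a direct application of Lemma \ref{L3:Lp} with $\calG=G$ and $Z(s,y)=\rho(v(s,y))\theta(s,y)$; adaptedness and joint measurability of $Z$ follow from the Lipschitz continuity of $\rho$ together with those properties of $v$ and $\theta$, and the required $M,2$-finiteness is exactly the standing assumption that the Walsh integral $(G\lhd[\rho(v)\dot W])(t,x)$ is well-defined.

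The second inequality reduces, after expanding $\|\cdot\|_{M,p}^2$ as a $\ud s\,\ud y$ integral, to the pointwise estimate
$$\Norm{\rho(v(s,y))}_p^2\le \frac{a_{p,\Vip}^2\,\Lip_\rho^2}{b_p}\bigl(\Vip^2+\Norm{v(s,y)}_p^2\bigr).$$
I would prove this by splitting into the three subcases dictated by \eqref{E:a_pv}. For $p=2$, take expectation directly in \eqref{E:LinGrow}, giving $\Norm{\rho(v)}_2^2\le \Lip_\rho^2(\Vip^2+\Norm{v}_2^2)$, which matches $a_{2,\Vip}=1$, $b_2=1$. For $p>2$ with $\Vip\ne 0$, raise \eqref{E:LinGrow} to the $p/2$-th power, apply $(a+b)^{p/2}\le 2^{p/2-1}(a^{p/2}+b^{p/2})$, take expectation, and then use subadditivity $(A+B)^{2/p}\le A^{2/p}+B^{2/p}$ (valid since $2/p\le 1$) to produce the factor $2^{(p-2)/p}=2^{2(p-1)/p}/2=a_{p,\Vip}^2/b_p$. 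For $p>2$ with $\Vip=0$, use the sharper consequence $|\rho(v)|\le \Lip_\rho|v|$ of \eqref{E:LinGrow} to get $\Norm{\rho(v)}_p^2\le \Lip_\rho^2\Norm{v}_p^2$, which matches $a_{p,0}=\sqrt 2$ and $b_p=2$. Substituting back and recognizing that $\widehat{\calL}_0(t,x;s,y)=(a_{p,\Vip}z_p\Lip_\rho)^2 G^2(s,y)$ in the definition of the $\theta$-weighted convolution $\rhd$ then gives the second inequality.

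For the bound on $\Norm{w(t,x)}_p^2$: when $p=2$, the Walsh integral has mean zero and is orthogonal in $L^2(\Omega)$ to the deterministic $f(t,x)$, so $\Norm{w}_2^2=f^2+\Norm{(G\lhd[\rho(v)\dot W])(t,x)}_2^2$, matching $b_2=1$. When $p>2$, Minkowski's inequality gives $\Norm{w(t,x)}_p\le|f(t,x)|+\Norm{(G\lhd[\rho(v)\dot W])(t,x)}_p$ and hence $\Norm{w}_p^2\le 2\bigl(f^2+\Norm{(G\lhd[\rho(v)\dot W])(t,x)}_p^2\bigr)$, matching $b_p=2$. Combining with the inequality just established yields the stated bound. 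The only non-routine step is bookkeeping the constants $a_{p,\Vip}$ and $b_p$ across the three subcases; there is no genuine analytic obstacle, as everything reduces to the BDG inequality packaged in Lemma \ref{L3:Lp}, Minkowski's inequality, and elementary convexity/subadditivity inequalities for real powers.
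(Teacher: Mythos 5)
Your proof is correct and is essentially the argument the paper intends: the paper only remarks that the proof is ``similar to that of Lemma 2.19 of \cite{ChenDalang13Heat}'', and that argument is precisely the chain you give --- Lemma \ref{L3:Lp} (BDG) for the first inequality, the pointwise moment bound $\Norm{\rho(v)}_p^2\le b_p^{-1}a_{p,\Vip}^2\Lip_\rho^2(\Vip^2+\Norm{v}_p^2)$ split into the three cases of \eqref{E:a_pv}, and then orthogonality (for $p=2$) or Minkowski plus $(a+b)^2\le 2(a^2+b^2)$ (for $p>2$) to absorb the factor $b_p$. Your bookkeeping of the constants $a_{p,\Vip}$, $z_p$, $b_p$ checks out in all three subcases.
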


\subsubsection{Proof of Theorem \ref{T3:ExUni}}
The proof follows the same six steps as in the proof of \cite[Theorem
2.4]{ChenDalang13Heat} with the
following replacements:

Proposition 2.2 of \cite{ChenDalang13Heat} by Assumptions
\ref{A:Converge}, \ref{A:Gronwall};

Lemma 2.14, ibid., by Lemma \ref{L3:Lp};

Proposition 2.15, ibid., by Proposition \ref{P3:Picard};

Lemma 2.19, ibid., by Lemma \ref{L3:HigherMom};

Lemma 2.21, ibid., by Assumption \ref{A:InDt}.

\noindent
Under Condition \conRef{H}, after making the following further replacements,
the proof will be identical to \cite[Theorem 2.4]{ChenDalang13Heat}:

Proposition 2.16, ibid., by Assumption \ref{A:Continuous} and
Condition \conRef{H}--(a);

Proposition 2.18, ibid., by Assumption \ref{A:Cont-Tail}
and Condition \conRef{H}--(a);

Lemma 2.20, ibid., by Assumption \ref{A:InDt-Bdd} and Condition \conRef{H}--(b).

\noindent
The only care that we should take is that under Condition \conRef{W}, i.e.,
Assumption \ref{A:Cont-Bdd}, the proof should be also modified in certain
places. In the
following, we will highlight these changes.

Recall that in Step 1, we define $u_0(t,x)=J_0(t,x)$ and show by the above
(the first set of) replacements that
\[
I_1(t,x) = \iint_{[0,t]\times\R^d}G(t-s,x-y)\;
\theta\left(s,y\right)\;  \rho\left(u_0\left(s,y\right)\right)
W\left(\ud s,\ud y\right)
\]
is a well defined Walsh integral and the random field
$\left\{I_1\left(t,x\right): (t,x)\in\R_+\times\R^d \right\}$
is adapted and jointly measurable.
The only difference is that the continuity of
$(t,x)\mapsto I_1(t,x)$ from
$\R_+^*\times\R^d$ into $L^p(\Omega)$ is guaranteed by
part (\Wcd) of Proposition \ref{P3:Picard}.

Step 2 gives the Picard iteration, where we assume that for all $k\le n$ and
$(t,x)\in\R_+^*\times\R^d$, the Walsh
integral \[I_k\left(t,x\right) = \iint_{[0,t]\times\R^d}
G\left(t-s,x-y\right)\:
\theta\left(s,y\right)
\:
\rho\left(u_{k-1}\left(s,y\right)\right)
W\left(\ud s,\ud y\right)\]
is well defined such that
\begin{enumerate}[(1)]
 \item $u_k:=J_0+I_k$ is adapted.
 \item The function
$(t,x)\mapsto I_k(t,x)$ from $\R_+^*\times\R^d$ into
$L^p(\Omega)$ is continuous.
 \item $\E\left[u_k^2\left(t,x\right)\right] \!\!=\!\!
J_0^2(t,x)+\sum_{i=0}^{k-1}
\left(\left[\Vip^2+J_0^2\right]
\rhd\calL_i(t,x;\cdot,\circ)\right)\left(t,x\right)$ for the quasi-linear
case and is bounded from above and below (if $\rho$ satisfies
\eqref{E:lingrow} additionally):
\begin{multline*}
 J_0^2(t,x)+
\sum_{i=0}^{k-1}
\left(\left[\vip^2+J_0^2\right]
\rhd\underline{\calL}_i(t,x;\cdot,\circ)\right)\left(t,x\right)
\\ \le
\Norm{u_k(t,x)}_2^2
 \le
J_0^2(t,x)+
\sum_{i=0}^{k-1}
\left(\left[\Vip^2+J_0^2\right]
\rhd\overline{\calL}_i(t,x;\cdot,\circ)\right)\left(t,x\right).
\end{multline*}
\item  $\Norm{u_k\left(t,x\right)}_p^2 \le b_p\:
J_0^2(t,x)+
\sum_{i=0}^{k-1}
\left(\left(\Vip^2 + b_p\:  J_0^2\right)\rhd
\widehat{\calL}_i(t,x;\cdot,\circ)\right)\left(t,x\right)$.
\end{enumerate}
To prove parts (3) and (4) for the case $k=n+1$, we need to apply Lemma
\ref{L3:HigherMom} and \eqref{E:RhdAs2} in
Lemma \ref{L3:RhdAs} to properly deal with the order of the
$\theta$-weighted
convolutions.
Again, the $L^p(\Omega)$-continuity of $(t,x)\mapsto I_{n+1}(t,x)$ is proved by
part (\Wcd) of Proposition \ref{P3:Picard}.

Similarly, in Step 3, we claim that for all $(t,x)\in\R_+^*\times\R^d$, the
series
$\{I_n\left(t,x\right):\:n\in\bbN\}$, with
$I_0\left(t,x\right):=J_0\left(t,x\right)$,  is a Cauchy sequence in
$L^p(\Omega)$. Define $F_n\left(t,x\right)= \Norm{I_{n+1}\left(t,x\right)-
I_n\left(t,x\right)}_p^2$.
For $n\ge 1$, by Lemma \ref{L3:Lp},
\[
F_n\left(t,x\right)\le\left(F_{n-1}\rhd
\widetilde{\calL}_0\right)\left(t,x\right),
\]
where $\widetilde{\calL}_0(t,x) := \calL_0\left(t,x;
z_p\: \max\left(\LIP_\rho, a_{p,\Vip}\Lip_\rho\right)\right)$.
Then apply this relation recursively using  \eqref{E:RhdAs1} in Lemma
\ref{L3:RhdAs} to obtain that
\[F_n(t,x)\le \left(F_{n-1}\rhd
\widetilde{\calL}_0\right)\left(t,x\right)\le
\cdots
\le
\left(
\left(
\cdots
\left(
\left(\left(\Vip^2+J_0^2\right) \rhd \widetilde{\calL}_0\right)\rhd
\widetilde{\calL}_0\right)
\rhd \cdots \right)
\rhd \widetilde{\calL}_0
\right)
\left(t,x\right),
\]
where the r.h.s. of the inequality has $n+1$ convolutions.
We now apply \eqref{E:RhdAs1} in Lemma
\ref{L3:RhdAs}. then Assumption \ref{A:Gronwall} to
obtain
\[F_n\left(t,x\right)\le
\left(\left[ \Vip^2+J_0^2\right] \rhd
\widetilde{\calL}_n(t,x;\cdot,\circ)\right)\left(t,x\right)
\le
\left(\left[ \Vip^2+J_0^2\right] \rhd
\widetilde{\calL}_0\right)\left(t,x\right)B_n(t),
\]
where the kernel functions $\widetilde{\calL}_n\left(t,x;s,y\right)$ are
defined by the same parameter as $\widetilde{\calL}_0(t,x)$.

Towards the end of Step 4, we need to apply Lebesgue's dominated convergence
theorem. To check the integrability of the integrand, we use \eqref{E:KUP} and
then Lemma \ref{L3:RhdAs}.

In Step 5, when we convolve an extra kernel function $\widetilde{\calK}$, again
we need to apply \eqref{E:RhdAs10} in Lemma \ref{L3:RhdAs} to deal with
the order of the $\theta$--weighted convolution.

With these replacements and changes, Theorem \ref{T3:ExUni} is also proved under
Condition \conRef{W}.
\myEnd

\subsection{Application to the stochastic heat equation with distribution-valued
initial data}
\label{SS:ThetaHeat}

We apply Theorem \ref{T3:ExUni} to study the stochastic heat equation
\begin{align}\label{E1:ThetaHeat}
\begin{cases}
\left(\frac{\partial }{\partial t} - \frac{\nu}{2}
\frac{\partial^2 }{\partial x^2}\right) u(t,x) =  \rho(u(t,x))
\:\theta(t,x)\:\dot{W}(t,x),&
x\in \R,\; t \in\R_+^*\;, \\
\quad u(0,\cdot) = \mu(\cdot)\;.
\end{cases}
\end{align}
Let $G_\nu(t,x)$ be the heat kernel, i.e.,
\begin{align}\label{E:HeatG}
G_\nu(t,x)=\frac{1}{\sqrt{4\pi\nu
t}}\exp\left(-\frac{x^2}{4t} \right),
\quad\text{for $t>0$ and $x\in\R$.}
\end{align}
We will focus on this equation with general initial data, and we will study
how certain properties of $\theta(t,x)$ function affect the {\it admissible
initial data} -- the initial data starting from which the stochastic
heat equation \eqref{E1:ThetaHeat} admits a random field solution.
Recall that  \cite[Proposition \myRef{2.11}{P2:D-Delta}]{ChenDalang13Heat}
shows that if $\theta(t,x)\equiv 1$, then the initial data cannot go beyond
measures.

As for the properties of $\theta(t,x)$, we will not pursue the full
generality here. Instead, we only consider certain particular
$\theta(t,x)$ to show the balance between certain properties of $\theta(t,x)$
and the set of the admissible initial data.
For $r\ge 0$, define
\[
\Xi_r :=\left\{\theta: \R_+\times\R \mapsto \R:  \sup_{(t,x)\in
\R_+^*\times\R} \frac{\left|\theta(t,x)\right|}{t^r\wedge
1}<+\infty\right\},\;\;\text{and}\;\;
\Xi_{\infty} := \bigcap_{n\in \bbN} \Xi_n\:.
\]
Clearly, if $0\le m\le n$, then $\Xi_m\supseteq \Xi_n$. Here are some simple
examples: $ t^k\wedge 1 \in \Xi_k$ for all $k\ge 0$;
$\exp\left(-1/t\right)\in \Xi_{+\infty}$.

Let $C^\infty_c(\R)$ be the space of  the $C^\infty$-functions with compact
support.
Let $\calD'(\R)$ be the space of distributions --- the dual space of
$C_c^\infty(\R)$. Let $\mu$ be a locally finite measure on $\R$
and let $\mu=\mu_+ - \mu_-$ be its Jordan decomposition into
two non-negative measures with disjoint supports.
Denote $| \mu|=\mu_++\mu_-$.

\begin{definition}\label{D2:D'k}
Let $\calM_H(\R)$ be the set of signed Borel measures $\mu$ on $\R$ such that
for all $t>0$ and $x\in\R$, $\left(|\mu|*G_\nu(t,\cdot)\right)(x)<+\infty$.
For $k\in\bbN$, define
\[\calD_k'\left(\R\right)= \left\{\mu\in \calD'\left(\R\right): \exists
\mu_0\in
\calM_H\left(\R\right), \text{s.t.}\; \mu = \mu_0^{(k)}
\right\},\quad\text{and}\quad
\calD_{+\infty}'\left(\R\right)=\bigcup_{k\in\bbN}
\calD_k' \left(\R\right),
\]
where $\mu_0^{(k)}$ denotes the $k$-th distributional
derivative.
\end{definition}

\begin{theorem} \label{T3:ThetaHeat}
Suppose that $\rho$ is Lipschitz continuous.
If $\theta(t,x)\in \Xi_r$ for some $0\le r\le +\infty$, then
\eqref{E1:ThetaHeat} has a solution
$\left\{u(t,x):\; t>0,x\in\R\right\}$ in the sense of Definition
\ref{D3:Solution}
for any initial data $\mu \in \calD_k'\left(\R\right)$ with $k\in\bbN$ and $0\le
k<2r+1/2$.
Moreover, the solution $u(t,x)$ is unique (in the sense of
versions) and is $L^p(\Omega)$ -continuous over
$\R_+^*\times\R$ for all $p\ge 2$.
In addition, the estimates of Theorem \ref{T3:ExUni} apply.
\end{theorem}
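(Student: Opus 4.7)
The plan is to apply Theorem \ref{T3:ExUni} with $G = G_\nu$ the heat kernel and
\[
J_0(t,x) = \bigl(\mu_0 \ast G_\nu^{(k)}(t,\cdot)\bigr)(x),
\]
where $G_\nu^{(k)}(t,y) := \partial_y^k G_\nu(t,y)$; this representation follows from the distributional definition of convolution applied to $\mu = \mu_0^{(k)}$, and the hypothesis $\mu_0 \in \calM_H(\R)$ ensures that $J_0(t,x)$ is finite for all $t > 0$. My task then reduces to verifying conditions \conRef{G} and \conRef{H} of Theorem \ref{T3:ExUni} with $d=1$.

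For the assumptions that only involve $G_\nu$ (Assumptions \ref{A:R-G}, \ref{A:Cont-Tail}, \ref{A:Continuous}, \ref{A:Converge}, \ref{A:Gronwall}), I would adapt the verifications carried out in \cite{ChenDalang13Heat} for the case $\theta \equiv 1$; since $\theta \in \Xi_r$ satisfies $|\theta(t,x)| \le C\, (t^r \wedge 1) \le C$, the $\theta$-weighted iterated convolutions defining $\calL_n$ and $\calH_n$ are dominated by their unweighted counterparts up to multiplicative constants, so these properties transfer. Assumption \ref{A:InDt-Bdd} follows from the smoothing property of the heat kernel: for any $t_0 > 0$, $J_0$ is a smooth function of $(t,x)$ on $[t_0,\infty[\,\times\R$ and hence bounded on compacts there.

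The main obstacle---and the step that pins down the bound $k < 2r + 1/2$---is Assumption \ref{A:InDt}, namely the boundedness of $\bigl((1 + J_0^2)\theta^2\bigr) \star G_\nu^2$ on compact subsets of $\R_+^* \times \R$. Using the standard identity
\[
G_\nu^{(k)}(s,y) = (-1)^k (2\nu s)^{-k/2}\, \He_k\!\bigl(y/\sqrt{2\nu s}\bigr)\, G_\nu(s,y),
\]
Plancherel (or a direct change of variables) gives the scaling $\int_\R \bigl(G_\nu^{(k)}(s,y)\bigr)^2\, \ud y = C_k\, s^{-k-1/2}$, and a corresponding bound carries over to $\int_\R J_0^2(s,y)\, \ud y$ for general $\mu_0 \in \calM_H(\R)$ (locally in the spatial variable). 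Combining this with $\theta^2(s,y) \le C\, s^{2r}$ for small $s$ and with the uniform boundedness of $G_\nu^2(t-s, x-y)$ on $s \le t/2$, the singular part of the convolution is controlled by a constant multiple of
\[
\int_0^{t/2} s^{2r - k - 1/2}\, \ud s,
\]
which is finite precisely when $k < 2r + 1/2$. The complementary region $s \in [t/2, t]$ is harmless since $J_0$ and $\theta$ are locally bounded there and $G_\nu^2$ is integrable in space-time.

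The subtlest part of executing this plan will be passing from the pointwise/Plancherel scaling, which is transparent when $\mu_0 = \delta_0$, to a general locally finite $\mu_0 \in \calM_H(\R)$. For unbounded $\mu_0$ I would dominate $|J_0(s,y)|$ by a convolution of $|\mu_0|$ with a slightly ``broader'' Gaussian derivative (absorbing the Hermite polynomial factor into a Gaussian of marginally larger variance) and then invoke $\bigl(|\mu_0|\ast G_\nu(t,\cdot)\bigr)(x) < +\infty$ to get a locally uniform bound. Once Assumption \ref{A:InDt} is secured under $k < 2r + 1/2$, Theorem \ref{T3:ExUni} immediately delivers existence, uniqueness up to versions, $L^p(\Omega)$-continuity, and the moment estimates \eqref{E:Mom-Up}--\eqref{E:TP}.
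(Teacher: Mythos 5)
Your proposal is correct and follows essentially the same route as the paper: the paper also reduces everything to verifying Conditions \conRef{G} and \conRef{H} of Theorem \ref{T3:ExUni}, imports the $G_\nu$-only assumptions from the $\theta\equiv 1$ case, and isolates Assumption \ref{A:InDt} as the one place where $k<2r+1/2$ enters, proving it in Proposition \ref{P3:InDt} by exactly the mechanism you describe — absorbing the Hermite polynomial in $\partial_y^k G_\nu$ into a Gaussian of larger variance via \eqref{E:DifG}, using the semigroup property and the $\calM_H(\R)$ condition to control the spatial growth of $J_0$, and ending with the integral $\int_0^t s^{2r-k-1/2}(t-s)^{-1/2}\,\ud s$, finite precisely when $k<2r+1/2$. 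Your Plancherel heuristic and the $s\le t/2$ splitting are only a model computation for $\mu_0=\delta_0$ (for general $\mu_0\in\calM_H(\R)$ the full-line integral $\int_\R J_0^2(s,y)\,\ud y$ may diverge, so the Gaussian decay of $G_\nu^2(t-s,x-\cdot)$ must be retained), but the Gaussian-domination argument you defer to is the paper's actual proof.
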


The proof of this theorem is given at the end of this section.

\begin{example}
If $\theta(t,x)\equiv 1$, then $\theta \in \Xi_r$ if and only if $r=0$. So, by
Theorem \ref{T3:ThetaHeat}, the admissible initial
data are $\calD_0'\left(\R\right)$, which recovers the condition
$\left(|\mu|* G_\nu(t,\cdot)\right)(x)<\infty$ for all $t>0$ and $x\in\R$ in
\cite{ChenDalang13Heat}.
\end{example}

\begin{example}[Derivatives of the Dirac delta functions]
If $\theta(t,x)=t^r \wedge 1$, then the initial data can be $\delta^{(k)}_0$
with
$0\le k<2r+1/2$. This is consistent with \cite[Proposition
\myRef{2.11}{P2:D-Delta}]{ChenDalang13Heat}. If
$\theta(t,x)=\exp\left(-1/t\right)$,
then all
derivatives of $\delta_0$ are admissible initial data.
\label{e2.25}
\end{example}

\begin{example}[Schwartz distribution-valued initial data and beyond]
If we choose $\theta(t,x)\in \Xi_{+\infty}$, for example $\theta(t,x)=
\exp\left(-1/t\right)$, then the initial data can be any Schwartz distribution.
Actually, the admissible initial
data $\calD_{+\infty}'\left(\R\right)$ can go beyond Schwartz distributions.
Here are some
simple examples: $\mu(\ud x) = \mu_0^{(k)}(\ud x)$ for any $k\in\bbN$, where $
\mu_0(\ud x) =e^{|x|}\ud x$.
\label{e2.26}

\end{example}


Let $\partial_y^n$ and $\partial_t^n$  be the $n$-th
partial derivatives with respect to $y$ and $t$, respectively. In particular,
\[\partial_y^k \left[G_\nu\left(t,x-y\right)\right] =
(-1)^k\left.\frac{\partial^k }{\partial z^k} G_\nu(t,z)\right|_{z= x-y}
=
(-1)^k \partial_x^k G_\nu\left(t,x-y\right).
\]
As a special case of a standard result (see, e.g., \cite[Theorem
1, Chapter 9, p.241]{FriedmanPDE} or \cite[(15), p. 15]{Eidelman69PS}), for all
$t\ge 0$ and
$n\in\bbN$, there are two constants $C_n$ and $\nu_n$ depending
only\footnote{There is no dependence on a finite horizon $T>0$ because the
coefficients of our parabolic equation are constant, while in
both \cite{Eidelman69PS} and \cite{FriedmanPDE} they are time-dependent. See
Remark \ref{R:Hermite} for a brief proof of this fact.} on $n$
and $\nu$ such that
\begin{align}\label{E:DifG}
\partial_y^n\: G_\nu(t,x-y) \le \frac{C_n}{t^{n/2}} G_{\nu_n}(t,x-y)\;, \quad
\text{for all $t\ge 0$, and $x,y\in\R$}.
\end{align}

\begin{remark}\label{R:Hermite}
For the heat kernel function,
the bound in \eqref{E:DifG} can be improved.
Let $\He_n(x;t)$ be the {\it Hermite polynomials}:
 \[\He_n\left(x\:;\:t\right) := \sum_{k=0}^{\Floor{n/2}} \binom{n}{2k}
(2k-1)!!\:
\left(-\frac{x}{\sqrt{t}}\right)^{n-2k},\quad\text{for all $t>0$ and $x\in\R$,}
\]
where $\Floor{n/2}$ is the largest integer not bigger than $n/2$ and
$n!!$ is the double factorial (see \cite{NIST2010}).
Then $\partial_y^n \left[ G_\nu\left(t,x-y\right)\right]
 = (\nu t)^{-n/2} G_\nu\left(t,x-y\right) \He_n\left(x-y;\nu
t\right)$; see Theorem 9.3.3 of \cite{Kuo05Introduction}. Then one can remove
the Hermite polynomials by increasing the parameter $\nu$ in the heat kernel
function to obtain the upper bound of the form \eqref{E:DifG}.
\end{remark}

\begin{lemma}\label{L2:SwitchDI}
Suppose that $\mu\in\calM_H(\R)$, and $n,m,a,b\in\bbN$.
Then for all $t>0$ and $x\in\R$,
\[\partial_t^a\partial_x^b \int_\R \mu(\ud y) \:\partial_t^n \partial_x^m
G_\nu\left(t,x-y\right)
=\int_\R \mu(\ud y)\: \partial_t^{n+a} \partial_x^{m+b} G_\nu\left(t,x-y\right).
\]
\end{lemma}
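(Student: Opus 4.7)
The plan is to prove the lemma by induction on $a+b$. The base case $a+b=0$ is trivial. For the inductive step it suffices, by symmetry of mixed partials, to justify interchanging one additional partial derivative, either $\partial_t$ or $\partial_x$, with the integral, so one can repeatedly absorb derivatives one at a time into the integrand, each time producing again a kernel of the same form $\int\mu(\mathrm{d}y)\,\partial_t^{n'}\partial_x^{m'}G_\nu(t,x-y)$ with larger $n',m'$.

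\smallskip

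First I would reduce everything to spatial derivatives via the heat equation: since $\partial_t G_\nu=\tfrac{\nu}{2}\partial_x^{2}G_\nu$, iteratively one has $\partial_t^{n}\partial_x^{m}G_\nu(t,x-y)=(\nu/2)^{n}\partial_x^{2n+m}G_\nu(t,x-y)$, so the integrand is always a pure spatial derivative of $G_\nu$ up to a multiplicative constant. Then to swap a spatial derivative, let $F(t,x):=\int_\R \mu(\mathrm{d}y)\,\partial_x^{\ell}G_\nu(t,x-y)$ and consider the difference quotient $h^{-1}[F(t,x+h)-F(t,x)]$. By the mean value theorem this equals $\int_\R\mu(\mathrm{d}y)\,\partial_x^{\ell+1}G_\nu(t,x+\xi(y,h)h-y)$ for some $\xi(y,h)\in[0,1]$. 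By the bound \eqref{E:DifG} one has $|\partial_x^{\ell+1}G_\nu(t,z)|\le C_{\ell+1}t^{-(\ell+1)/2}G_{\nu_{\ell+1}}(t,z)$. Using the standard Gaussian comparison, for $|h|\le 1$ and a suitably enlarged $\nu''>\nu_{\ell+1}$, one gets $G_{\nu_{\ell+1}}(t,x+\xi h -y)\le C\,G_{\nu''}(t,x-y)$ uniformly in $y$. Since $G_{\nu''}(t,z)=G_\nu(\nu''t/\nu,z)$, the hypothesis $\mu\in\calM_H(\R)$ makes this bound integrable against $|\mu|$, and dominated convergence yields $\partial_x F(t,x)=\int_\R\mu(\mathrm{d}y)\,\partial_x^{\ell+1}G_\nu(t,x-y)$. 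The $\partial_t$ case is handled analogously on a compact time window $[t_1,t_2]\subset(0,\infty)$: using the heat equation to express $\partial_t\partial_x^{\ell}G_\nu$ as $(\nu/2)\partial_x^{\ell+2}G_\nu$, the same bound \eqref{E:DifG} with order $\ell+2$ gives a uniform estimate in the time neighborhood, which together with another Gaussian comparison (to replace $t'\in[t_1,t_2]$ by $t$ at the cost of enlarging $\nu$) is again $|\mu|$-integrable by $\mu\in\calM_H(\R)$.

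\smallskip

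The main obstacle is constructing an envelope that is simultaneously uniform in a spatial and temporal neighborhood of $(t,x)$ and integrable against $|\mu|$. This is precisely where the two ingredients conspire: the bound \eqref{E:DifG} turns spatial derivatives of $G_\nu$ back into a Gaussian kernel (times a time-power factor that is harmless away from $t=0$), and the Gaussian inequality $G_\nu(t',x'-y)\le C\,G_{\nu''}(t,x-y)$ for nearby $(t',x')$ with $\nu''>\nu$ allows the $|\mu|$-integrability encoded in the definition of $\calM_H(\R)$ to be upgraded from a single point to a neighborhood. Once this envelope is in hand, dominated convergence gives the exchange of a single derivative and the integral, and the inductive iteration delivers the full statement.
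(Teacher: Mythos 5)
Your argument is correct and is essentially the paper's proof written out in full: the paper disposes of this lemma by citing the standard dominated-convergence criterion for differentiating under the integral sign (Billingsley, Theorem 16.8), and your induction with the envelope built from \eqref{E:DifG}, the Gaussian comparison with enlarged variance, and the integrability encoded in $\calM_H(\R)$ is precisely the verification of that criterion's hypotheses. No substantive difference in approach.
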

Note that $\partial_t\: G_\nu = \nu/2\: \partial^2_x \:G_\nu$.
The proof consists of using standard results (e.g.,
\cite[Theorem 16.8]{Billingsley95Book}) on permuting integrals and
differential signs. Now define
\begin{align}\label{E2:DefineJ0}
J_0(t,x)&:=
(-1)^k\left(\mu_0*  \partial_y^{k} \left[G_1( \nu t,\cdot)\right]\right)(x),\;
\text{for all $(t,x)\in\R_+^*\times \R$}\;,
\end{align}
which, by \eqref{E:DifG}, can be bounded by,
\begin{align}\label{E2:HomeSlt-Eqv}
\left|J_0(t,x)\right|\le
C_k t^{-k/2}
\left(|\mu_0|* G_{\nu_k}(t,\cdot)\right)(x)\;,
\end{align}
for some positive constants $C_k$ and $\nu_k$.
As a direct consequence of Lemma \ref{L2:SwitchDI}, for all
$\mu\in\calD_k'\left(\R\right)$, $J_0(t,x)$ defined in
\eqref{E2:DefineJ0} belongs to $C^\infty\left(\R_+^*\times\R\right)$, which
is the smoothing property of the heat kernel.

The following lemma is a standard result (see \cite{friedman} and also
\cite[Proposition 2.6.14]{LeChen13Thesis}).

\begin{lemma}\label{L2:HomeSlt}
Suppose that $\mu\in \calD_k'(\R)$, $k\in\bbN$. Let $\mu_0\in\calM_H(\R)$ be
the signed Borel measure associated to $\mu$ such that $\mu=\mu_0^{(k)}$.
Then the function $J_0(t,x)$ defined in \eqref{E2:DefineJ0} solves
\begin{align}\label{E2:Heat-home}
\begin{cases}
\left(\frac{\partial }{\partial t} - \frac{\nu}{2}
\frac{\partial^2 }{\partial x^2}\right) u(t,x) =  0,&
x\in \R,\; t \in\R_+^*, \\
\quad u(0,\cdot) = \mu(\cdot)\;,
\end{cases}
\end{align}
and
$\lim_{t\rightarrow 0_+}\InPrd{\psi, J_0(t,\cdot)} = \InPrd{\psi,\mu}$
for all $\psi\in C_c^\infty(\R)$.
\end{lemma}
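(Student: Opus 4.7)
The plan is to verify the two claims separately: first the PDE holds classically for $t>0$, and second the weak convergence of $J_0(t,\cdot)$ to $\mu$ as $t\downarrow 0$. Throughout, the key structural fact is that by the identity $\partial_y^k G_\nu(t,x-y)=(-1)^k \partial_x^k G_\nu(t,x-y)$ and \eqref{E2:DefineJ0},
\[
  J_0(t,x)=\int_{\R} \partial_x^k G_\nu(t,x-y)\,\mu_0(\ud y),
\]
so $J_0$ is a $k$-th spatial derivative of the classical heat-equation solution driven by $\mu_0 \in \calM_H(\R)$.

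\textbf{Step 1 (PDE).} I would apply Lemma \ref{L2:SwitchDI} (with $a=b=1$ or $a=0$, $b=2$) to move both $\partial_t$ and $\partial_x^2$ inside the integral and past the $k$ spatial derivatives of $G_\nu$. The hypotheses of that lemma are satisfied because $\mu_0\in\calM_H(\R)$ gives the requisite integrability of $|\mu_0|$ against $G_{\nu'}(t,x-\cdot)$ for any $\nu'>\nu$, and the Gaussian bound \eqref{E:DifG} controls every derivative of $G_\nu$. Then
\[
\Bigl(\partial_t-\tfrac{\nu}{2}\partial_x^2\Bigr)J_0(t,x)
=\int_\R \partial_x^k\Bigl[\bigl(\partial_t-\tfrac{\nu}{2}\partial_x^2\bigr)G_\nu(t,x-y)\Bigr]\mu_0(\ud y)=0,
\]
since $G_\nu$ solves the heat equation in $(t,x)$.

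\textbf{Step 2 (initial condition).} For $\psi\in C_c^\infty(\R)$, I would compute $\InPrd{\psi,J_0(t,\cdot)}$ by interchanging the $\ud x$ and $\mu_0(\ud y)$ integrals (justified by Fubini and $(|\mu_0|*G_\nu(t,\cdot))(x)<\infty$ combined with the compact support of $\psi$), then integrating by parts $k$ times in $x$; the boundary terms vanish by compact support, producing the sign $(-1)^k$. This yields
\[
\InPrd{\psi,J_0(t,\cdot)}
=(-1)^k\!\int_{\R} (\psi^{(k)} * G_\nu(t,\cdot))(y)\,\mu_0(\ud y).
\]
It then suffices to show that the right-hand side converges to $(-1)^k\int_\R \psi^{(k)}(y)\mu_0(\ud y)=\InPrd{\psi,\mu_0^{(k)}}=\InPrd{\psi,\mu}$ as $t\downarrow 0$.

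\textbf{Step 3 (weak convergence).} Let $M$ be such that $\spt{\psi}\subseteq[-M,M]$. I would split the integral at $|y|\le N$ and $|y|>N$ for large fixed $N>M+1$. On $\{|y|\le N\}$, the convergence $\psi^{(k)}*G_\nu(t,\cdot)\to\psi^{(k)}$ is uniform (standard approximate-identity argument for uniformly continuous, compactly supported functions), and $|\mu_0|(\{|y|\le N\})<\infty$, so this piece converges as desired. For the tail $|y|>N$, a Gaussian tail estimate yields $|(\psi^{(k)}*G_\nu(t,\cdot))(y)|\le \|\psi^{(k)}\|_\infty\,e^{-(|y|-M)^2/(2\nu t)}$; for $t\le 1$ this is dominated by $C\,G_{\nu'}(1,y)$ for some $\nu'>\nu$, which is $|\mu_0|$-integrable by $\mu_0\in\calM_H(\R)$, so dominated convergence gives the tail contribution $\to 0$ (the pointwise limit on $|y|>N$ being $\psi^{(k)}(y)=0$). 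Letting $N\to\infty$ at the end handles any residue.

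The main obstacle is the tail estimate in Step 3, since $\mu_0$ need only be locally finite and may have polynomial or even sub-exponential growth compatible with $\calM_H(\R)$; the Gaussian decay of $G_\nu(t,\cdot)$ at fixed $y$ away from the support of $\psi^{(k)}$, combined with the defining integrability property of $\calM_H(\R)$, is what makes this estimate close. Everything else is routine application of Lemmas \ref{L2:SwitchDI} and the Gaussian bound \eqref{E:DifG}.
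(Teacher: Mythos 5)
Your proof is correct, but there is nothing in the paper to compare it with: the authors do not prove Lemma \ref{L2:HomeSlt} at all, dismissing it with the sentence ``the following lemma is a standard result'' and citations to Friedman's book and to \cite[Proposition 2.6.14]{LeChen13Thesis}. What you have written is therefore a self-contained verification, and it follows the route one would expect given the surrounding machinery: Step 1 is precisely what Lemma \ref{L2:SwitchDI} is designed for (take $a=1$, $b=0$ for the time derivative and $a=0$, $b=2$ for the spatial one, with $n=0$, $m=k$ --- your parenthetical ``$a=b=1$'' is a slip), and Steps 2--3 are the standard approximate-identity argument, with the only genuinely delicate point being the one you single out: $\mu_0$ is controlled solely through the defining property of $\calM_H(\R)$, namely that $|\mu_0|$ integrates every Gaussian, so one must dominate $y\mapsto(\psi^{(k)}*G_\nu(t,\cdot))(y)$ on $\{|y|\ge M+1\}$ by a fixed Gaussian, uniformly in $t\in\;]0,1]$. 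Your displayed tail bound $\|\psi^{(k)}\|_\infty e^{-(|y|-M)^2/(2\nu t)}$ drops the kernel's normalizing factor of order $t^{-1/2}$ (and the length of the support of $\psi$); the correct bound is of the form $C\,t^{-1/2}e^{-(|y|-M)^2/(ct)}$, but since $(|y|-M)^2\ge 1$ on the tail one can split off $t^{-1/2}e^{-1/(2ct)}$, which is bounded on $]0,1]$, leaving $e^{-(|y|-M)^2/(2c)}\le C'G_{\nu'}(1,y)$, so your domination and hence your conclusion stand. (The final ``letting $N\to\infty$'' is superfluous: for any fixed $N>M+1$ the near part already converges to $(-1)^k\int\psi^{(k)}\,\ud\mu_0=\InPrd{\psi,\mu}$ and the far part to $0$.) In short, the argument is sound and fills a gap the paper leaves to the references.
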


\begin{proposition}\label{P3:InDt}
Suppose that $\theta(t,x)\in \Xi_r$ and $\mu\in \calD_k'\left(\R\right)$ with
$0\le k <2r+1/2$. Then for all $v>0$ and all compact sets
$K \subseteq \R_+^*\times\R$,
\[\sup_{(t,x)\in K}\left(\left[v^2 + J_0^2\right]\rhd G_\nu^2
\right)(t,x)<+\infty.\]
\end{proposition}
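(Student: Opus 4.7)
The plan is to split
\begin{equation*}
([v^2 + J_0^2]\rhd G_\nu^2)(t,x) = v^2\,(1\rhd G_\nu^2)(t,x) + (J_0^2 \rhd G_\nu^2)(t,x)
\end{equation*}
and handle each term. The first is straightforward: $\theta\in\Xi_r$ gives $\theta^2\le C$ uniformly, and $\int_\R G_\nu^2(u,z)\,\ud z = O(u^{-1/2})$ makes the $s$-integral of order $\sqrt t$, uniformly on compact $K$.

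The work lies in the $J_0^2$ term. I would insert the bound \eqref{E2:HomeSlt-Eqv} together with $\theta^2(s,y)\le C(s^{2r}\wedge 1)$ to factor out the time weight $s^{2r-k}\wedge s^{-k}$ and reduce to estimating
\begin{equation*}
\int_\R (|\mu_0|*G_{\nu_k}(s,\cdot))^2(y)\,G_\nu^2(t-s,x-y)\,\ud y.
\end{equation*}
Expand the square as a double integral against $|\mu_0|(\ud z_1)|\mu_0|(\ud z_2)$ and use the Gaussian product identity $G_{\nu_k}(s,y-z_1)G_{\nu_k}(s,y-z_2) = G_{2\nu_k}(s,z_1-z_2)\,G_{\nu_k/2}(s,y-\bar z)$ with $\bar z=(z_1+z_2)/2$, together with $G_\nu^2(u,z) = (8\pi\nu u)^{-1/2} G_{\nu/2}(u,z)$. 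The inner $y$-integral then collapses via the heat semigroup to $G_{\nu_k}(u^*,\bar z-x)$, where $u^* = s/2 + \nu(t-s)/(2\nu_k)$.

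The main obstacle is decoupling $G_{\nu_k}(u^*,\bar z-x)$ into a product in $z_1-x$ and $z_2-x$ so that the $(z_1,z_2)$ integration factors and yields $(|\mu_0|*G_{\nu_k}(2u^*,\cdot))^2(x)$. My approach is the algebraic identity $(\bar z-x)^2 = \tfrac12[(z_1-x)^2+(z_2-x)^2] - \tfrac14(z_1-z_2)^2$, which splits the exponent but produces a spurious factor $\exp\!\bigl((z_1-z_2)^2/(16\nu_k u^*)\bigr)$. This is absorbed by the stronger Gaussian decay $\exp\!\bigl(-(z_1-z_2)^2/(8\nu_k s)\bigr)$ coming from $G_{2\nu_k}(s,z_1-z_2)$, because $2u^* = s + \nu(t-s)/\nu_k > s$ whenever $s<t$. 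Bounding the resulting non-positive exponential by $1$ and collecting normalizations yields
\begin{equation*}
\int_\R (|\mu_0|*G_{\nu_k}(s,\cdot))^2(y)\,G_\nu^2(t-s,x-y)\,\ud y \le \frac{C\sqrt{u^*}}{\sqrt{s(t-s)}}\,(|\mu_0|*G_{\nu_k}(2u^*,\cdot))^2(x).
\end{equation*}

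On compact $K$, $u^*$ lies in a bounded subinterval of $\R_+^*$, so a Gaussian comparison (choose $T_0$ with $\nu T_0$ exceeding $\nu_k\sup_K 2u^*$; then $G_{\nu_k}(2u^*,x-y)\le M\,G_\nu(T_0,-y)$ uniformly for $(t,x)\in K$ and $y\in\R$), combined with $\mu_0\in\calM_H(\R)$, bounds $(|\mu_0|*G_{\nu_k}(2u^*,\cdot))^2(x)$ uniformly by $M^2(|\mu_0|*G_\nu(T_0,\cdot))^2(0)<+\infty$. The remaining time integral is
\begin{equation*}
\int_0^t (s^{2r-k}\wedge s^{-k})\,\frac{\ud s}{\sqrt{s(t-s)}},
\end{equation*}
whose only delicate feature is the singularity at $s=0$: on $(0,1\wedge t]$ the integrand behaves like $s^{2r-k-1/2}(t-s)^{-1/2}$, which is integrable precisely when $2r-k-1/2>-1$, i.e.\ $k<2r+\tfrac12$, matching the hypothesis. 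The $s=t$ singularity is the harmless $(t-s)^{-1/2}$, and on $(1,t]$ (if $t>1$) the bound $\theta^2\le C$ gives a trivially convergent integrand. All bounds are uniform in $(t,x)\in K$, which concludes the plan.
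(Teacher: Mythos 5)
Your proposal is correct and follows essentially the same route as the paper: bound $J_0$ via \eqref{E2:HomeSlt-Eqv}, expand $(|\mu_0|*G)^2$ as a double integral, apply the Gaussian product identity and collapse the $y$-integral, decouple the resulting kernel in $\bar z$ into a product over $z_1,z_2$, and observe that the time integral converges precisely when $k<2r+1/2$. The only differences are organizational: you convolve the mismatched Gaussians $G_{\nu_k/2}$ and $G_{\nu/2}$ directly (hence the $s$-dependent time $u^*$ and the extra compactness check), where the paper first replaces $G_\nu$ by $G_{\nu_k}$ so the semigroup property lands at time $t$, and your decoupling step via $(\bar z-x)^2=\tfrac12[(z_1-x)^2+(z_2-x)^2]-\tfrac14(z_1-z_2)^2$ is exactly a re-derivation of Lemma \ref{LH:Split}, which the paper simply cites.
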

\begin{proof}
Let $\mu_0\in\calM_H(\R)$ be such that $\mu=\mu_0^{(k)}$.
Then $J_0(t,x)$ given in \eqref{E2:DefineJ0}
is a weak solution to the homogeneous equation (see also \cite[Lemma
2.6.14]{LeChen13Thesis}).
We assume first that $v=0$.
Since for some constant $C$, $|\theta(t,x)|  \le C
\left(1\wedge t^r\right)\le C t^r$,
it suffices to prove that, for all compact sets $K\subseteq\R_+^*\times\R$,
\[
\sup_{(t,x)\in K} f(t,x)<+\infty,\quad \text{where}\;
f(t,x):= \iint_{[0,t]\times \R}  \ud s\ud y\: J_0^2\left(s,y\right) s^{2r}
G_\nu^2\left(t-s,x-y\right).
\]
Without loss of generality, we assume from now that the measure $\mu_0$ is
non-negative. We will use the bound on $J_0(t,x)$ in \eqref{E2:HomeSlt-Eqv} and
denote $\xi:=\nu_k$.
Because $\xi >\nu$ (see Remark \ref{R:Hermite}),
\[
\sup_{(s,y)\in[0,t]\times\R}\; \frac{G_\nu(t-s,x-y)}{G_\xi(t-s,x-y)}<+\infty\:.
\]
Hence, for some constant $C>0$,
\[
|f(t,x)|\le C \iint_{[0,t]\times \R} \ud s\ud y\:
s^{2r-k}\left(\mu_0*G_\xi(s,\cdot)\right)^2(y) \; G_\xi^2(t-s,x-y).
\]
Then write $\left(\mu_0*G_\xi(s,\cdot)\right)^2(y)$ in the form of double
integral and use Lemma \ref{L2:GG}:
\begin{align*}
 \left| f(t,x) \right| \le
\int_0^t\ud s\: \frac{C \:s^{2r-k}}{\sqrt{4\pi\xi (t-s)}}&
\iint_{\R^2}
\mu_0(\ud z_1)\mu_0(\ud z_2)\:
G_{2\xi}\left(s,z_1-z_2\right)
\\
&\times \int_\R \ud y\: G_{\frac{\xi}{2}}\left(s,y-\bar{z}\right)
G_{\frac{\xi}{2}}\left(t-s,x-y\right),
\end{align*}
where $\bar{z}=(z_1+z_2)/2$.
By the semigroup property of the heat kernel function,
\[
 \left| f(t,x) \right| \le
\int_0^t\ud s\: \frac{C \:s^{2r-k}}{\sqrt{4\pi\xi (t-s)}}
\iint_{\R^2}
\mu_0(\ud z_1)\mu_0(\ud z_2)\:
G_{2\xi}\left(s,z_1-z_2\right) \: G_{\frac{\xi}{2}}(t,x-\bar{z}).
\]
Apply Lemma \ref{LH:Split} to $G_{2\xi}\left(s,z_1-z_2\right) \:
G_{\frac{\xi}{2}}(t,x-\bar{z})$ to see that
\begin{align}\label{E_:HeatInt}
 \left| f(t,x) \right| \le
\left(\mu_0 * G_{2\xi}(t,\cdot)\right)^2\!(x)\:
\int_0^t\ud s\: \frac{C \:s^{2r-k-1/2}\:\sqrt{t}}{\sqrt{\pi\xi
(t-s)}} \:.
\end{align}
The integration over $s$ is finite since $2r-k-1/2> -1$.
By the smoothing effect of the heat kernel, for any arbitrary compact
set $K\subseteq\R_+^*\times\R$,  $\sup_{(t,x)\in K}\left(\mu_0 *
G_{2\xi}(t,\cdot)\right)^2\!(x)$ is finite. This proves the proposition with
$v=0$.
As for the contribution of $v$, we simply replace $\mu_0(\ud x)$ by $v\:\ud x$
in \eqref{E_:HeatInt}.
This completes the proof of Proposition  \ref{P3:InDt}.
\end{proof}

\begin{proof}[Proof of Theorem \ref{T3:ThetaHeat}]
We only need to verify that Conditions \conRef{G} and
\conRef{H} of Theorem \ref{T3:ExUni} are satisfied.
Fix $r\in [0,+\infty]$ and $\theta(t,x)\in \Xi_r$. Since $\theta$ is uniformly
bounded and $d=1$, Assumption \ref{A:R-G} is satisfied.
Assumptions \ref{A:Converge} and \ref{A:Gronwall} are verified
by
\cite[Proposition \myRef{2.2}{P2:K}]{ChenDalang13Heat} with $\lambda=C
\Lip_\rho$.
Assumption \ref{A:InDt} is true due to Proposition \ref{P3:InDt}, where the
hypothesis $0\le k<2r+1/2$ is used.
Therefore, all conditions in \conRef{G}
are satisfied.
Both Assumptions \ref{A:Cont-Tail} and \ref{A:Continuous} are
satisfied due
to Propositions \myRef{2.18}{P2:G-Margin} and \myRef{2.16}{P2:G} of
\cite{ChenDalang13Heat},
respectively. Assumption \ref{A:InDt-Bdd} is true by
Lemma \myRef{2.20}{L2:J0Cont}, ibid.
Therefore, all conditions in \conRef{H} are satisfied.
This completes the proof of Theorem \ref{T3:ThetaHeat}.
\end{proof}

\section{Stochastic wave equation}
\label{S:MR-Wave}
We now turn to the study of the stochastic wave equation \eqref{E4:Wave}.
Recall the formulas for $J_0(t,x)$ and for the fundamental solution
$G_\kappa(t,x)$ given in \eqref{E4:J0-Wave}.

\subsection{Existence, uniqueness, moments and regularity}
\label{SS:Wave-Result}
Define a kernel function
\begin{align}\label{E4:BesselI}
\calK\left(t,x;\:\kappa,\: \lambda\right):=
\begin{cases}
\frac{\lambda^2}{4} I_0\left(\sqrt{\frac{\lambda^2\left((\kappa
t)^2-x^2\right)}{2 \kappa}}\right)& \text{if
$-\kappa t\le x\le \kappa t$\;,}\cr
0&\text{otherwise}\;,
\end{cases}
\end{align}
with two parameters $\kappa>0$ and $\lambda>0$,
where $I_n(\cdot)$ is the modified Bessel function of the first kind of order
$n$, or simply the {\em hyperbolic Bessel function} (\cite[10.25.2, on p.
249]{NIST2010}):
\begin{align}\label{E4:In}
 I_n(x) := \left(\frac{x}{2}\right)^n  \sum_{k=0}^{\infty}
\frac{\left(x^2/4\right)^k}{k! \: \Gamma(n+k+1)}\;.
\end{align}
See \cite{Kevorkian99PDE-V2,Watson58Bessel} for its
relation  with the wave equation.
Define
\begin{align}\label{E4:Sinh}
\calH\left(t;\:\kappa,\lambda\right) :=
\left(1\star\calK\right) (t,x)=
\:
\cosh\left(|\lambda|\sqrt{\kappa/2}\: t\right)-1\;,
\end{align}
where the second equality is proved in Lemma \ref{L:Ht} below. The following
bound on $I_0(x)$ will be useful and convenient for the later applications of
the moment formula:
\begin{align}\label{E4:I0Exp}
I_0(z) \le \cosh(z) \le  e^{|z|},\quad \text{for all $z\in\R$},
\end{align}
which can be seen from the formula $I_0(z)=\frac{1}{\pi}\int_0^\pi \ud \theta\:
\cosh(z \cos(\theta))$ (see \cite[(10.32.1)]{NIST2010}).
We use the same conventions as \eqref{E:Convention} regarding to the parameter
$\lambda$. For example, $\calK(t,x):= \calK\left(t,x;\kappa,
\lambda\right)$
and $\widehat{\calK}_p(t,x):=
\calK\left(t,x;\kappa, a_{p,\Vip} z_p \Lip_\rho \right)$.
Define two functions:
\begin{gather}\label{E4:Ttx}
 T_\kappa(t,x) := \left(t - \frac{|x|}{2\kappa}\right) \; \Indt{|x|\le 2
\kappa t}\;,
\\
\Theta_\kappa\left(t,x,y\right):= \iint_{\R_+\times\R}\ud s \ud z\:
G_\kappa(t-s,x-z)G_\kappa\left(t-s,y-z\right)= \frac{\kappa}{4}
T^2_\kappa
\left(t,x-y\right),\label{E4:Theta}
\end{gather}
where the second equality is proved in Lemma \ref{L:Theta}.
This is the quantity 
$\Theta\left(t,x,y\right)$ in \eqref{E:Theta}.
Let $\calM\left(\R\right)$ be the set of locally finite (signed) Borel measures
over $\R$.
\begin{theorem}\label{T4:ExUni}
Suppose that $g\in L_{loc}^2\left(\R \right)$,
$\mu \in \calM \left(\R\right)$ and $\rho$ is Lipschitz continuous with
$|\rho(u)|^2 \le \Lip_\rho^2\left(\Vip^2+u^2\right)$.
Define $\overline{\calK}$, $\overline{\calH}$, $T_\kappa$, etc., as above.
Then the stochastic integral equation \eqref{E4:WaveInt} has a random field
solution, in the sense of Definition \ref{D3:Solution}:
$u(t,x)=J_0(t,x)+I(t,x)$ for $t>0$ and $x\in\R$. Moreover,\\
(1) $u(t,x)$ is unique (in the sense of versions);\\
(2) $(t,x)\mapsto I(t,x)$ is $L^p(\Omega)$-continuous for all integers $p\ge
2$;\\
(3) For all even integers $p\ge 2$ and all $t>0$, $x,y \in\R$,
\begin{gather}
\label{E4:SecMom-Up}
\Norm{u(t,x)}_p^2 \le
\begin{cases}
J_0^2(t,x) + \left(J_0^2 \star \overline{\calK} \right) (t,x) +
\Vip^2  \overline{\calH}(t)& \text{if $p=2$,}\cr
2J_0^2(t,x) + \left(2J_0^2 \star \widehat{\calK}_p \right) (t,x) +
\Vip^2  \widehat{\calH}_p(t)&\text{if $p>2$,}
\end{cases}\\
\label{E4:TP-Up}
\E\left[u(t,x)u(t,y)\right]  \le
J_0(t,x)J_0(t,y) + 
 \frac{\kappa\Lip_\rho^2 \Vip^2}{4}
T^2_\kappa
\left(t,x-y\right)
+\frac{\Lip_\rho^2}{2}
\left(f\star G_\kappa\right)\left(T,\frac{x+y}{2}\right),
\end{gather}
where $T=T_\kappa\left(t,x-y\right)$ and $f(s,z)$ denotes the r.h.s. of
\eqref{E4:SecMom-Up} for
$p=2$;\\
(4) If $\rho$ satisfies \eqref{E:lingrow}, then
for all $t>0$, $x,y\in\R$,
\begin{gather}
\label{E4:SecMom-Lower}
\Norm{u(t,x)}_2^2 \ge J_0^2(t,x) + \left(J_0^2 \star \underline{\calK} \right)
(t,x)+
\vip^2\: \underline{\calH}(t),\\
\E\left[u(t,x)u(t,y)\right] \ge
J_0(t,x)J_0(t,y) + \frac{\kappa \lip_\rho^2\: \vip^2}{4}\:
T^2_\kappa\left(t,x-y\right)
+\frac{\lip_\rho^2}{2}
\left(f\star
G_\kappa\right)\left(T,\frac{x+y}{2}\right),
\label{E4:TP-Lower}
\end{gather}
where $T=T_\kappa\left(t,x-y\right)$ and $f(s,z)$ denotes the r.h.s. of
\eqref{E4:SecMom-Lower}; \\
(5) In particular, if $|\rho(u)|^2=\lambda^2
\left(\vv^2+u^2\right)$, then for all $t>0$, $x,y\in\R$,
\begin{gather}
 \label{E4:SecMom}
 \Norm{u(t,x)}_2^2 = J_0^2(t,x) + \left(J_0^2 \star \calK \right) (t,x)+ \vv^2
\: \calH(t),\\
\E\left[u(t,x)u(t,y)\right] =
J_0(t,x)J_0(t,y) +  \frac{\kappa \lambda^2 \vv^2}{4} \:
T^2_\kappa\left(t,x-y\right)
+ \frac{\lambda^2}{2}
\left(f\star
G_\kappa\right)\left(T,\frac{x+y}{2}\right),
\label{E4:TP}
\end{gather}
where $T=T_\kappa\left(t,x-y\right)$ and $f(s,z)=\Norm{u(s,z)}_2^2$ is defined
in \eqref{E4:SecMom}.
\end{theorem}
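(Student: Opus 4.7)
The plan is to deduce Theorem \ref{T4:ExUni} as a direct application of the abstract Theorem \ref{T3:ExUni} (taking $\theta\equiv 1$, so $\rhd$ reduces to ordinary space--time convolution $\star$), and then to evaluate the abstract kernels $\calK$ and $\calH$ in closed form. Accordingly I would organize the proof in three blocks: (i) verify Conditions \conRef{G} and \conRef{W} for $G_\kappa$ and $J_0$; (ii) compute $\calL_n$, $\calK$ and $\calH$ explicitly to obtain \eqref{E4:BesselI}--\eqref{E4:Sinh}; (iii) translate the abstract two-point bound \eqref{E:TP-Up} into the one-dimensional convolution appearing in \eqref{E4:TP-Up}.

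For (i), since $G_\kappa(t,x)=\tfrac12 \mathbf{1}_{\{|x|\le\kappa t\}}$ is bounded and has compact spatial support, Assumption \ref{A:R-G}(i) follows from the explicit evaluation $\Theta_\kappa(t,x,x)=\kappa t^2/4$ (Lemma \ref{L:Theta}), and continuity a.e.\ is immediate. For Assumption \ref{A:Cont-Bdd} (Condition \conRef{W}), I would use that $G_\kappa$ only takes values $0$ and $1/2$: picking $\beta,\tau,\alpha$ small enough, the support inclusion $\{|x'-y|\le\kappa(t'-s)\}\subseteq\{|x-y|\le\kappa(t+1-s)\}$ holds for $(t',x')\in B_{t,x,\beta,\tau,\alpha}$, giving the bound with $C=1$. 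Assumption \ref{A:InDt} reduces to showing that $(J_0^2\star G_\kappa^2)(t,x)$ is locally bounded; this follows from $g\in L^2_{loc}(\R)$ and $\mu\in\calM(\R)$ via \eqref{E4:J0-Wave} and the fact that the convolution with the compactly supported $G_\kappa^2$ only samples $J_0$ on a bounded set.

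For (ii), using $G_\kappa^2(t,x)=\tfrac14 \mathbf{1}_{\{|x|\le\kappa t\}}$, I would compute $\calL_n$ inductively. The claim is that on $\{(s,y):|y|\le\kappa s,\;s\le t\}$,
\[
\calL_n(t,x;s,y)=\frac{\lambda^{2(n+1)}}{4^{n+1}(n!)^2(2\kappa)^n}\bigl((\kappa(t-s))^2-(x-y)^2\bigr)^n.
\]
The induction step is a geometric integral over the light-cone intersection and can be streamlined by switching between the forward formula \eqref{E:FORRHD} and the backward formula \eqref{E:BACKRHD}. Summing over $n$ and matching with the series \eqref{E4:In} for $I_0$ gives \eqref{E4:BesselI}. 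The same explicit form verifies Assumptions \ref{A:Converge} and \ref{A:Gronwall}: one may take $B_n(t)=\bigl(\tfrac{\lambda^2\kappa t^2}{8}\bigr)^n/(n!)^2$, for which $\sum_n\sqrt{B_n(t)}=\cosh(|\lambda|\sqrt{\kappa/8}\,t)<\infty$. Finally, $\calH(t,x)=(1\star\calK)(t,x)$ is computed by integrating the Bessel series term by term; the space variable drops out because $G_\kappa^2$ sweeps out the full light cone, and one identifies the result as $\cosh(|\lambda|\sqrt{\kappa/2}\,t)-1$, as stated in \eqref{E4:Sinh} (this is the content of Lemma \ref{L:Ht}).

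For (iii), the key observation is the identity
\[
2\,G_\kappa(t-s,x-z)\,G_\kappa(t-s,y-z)=G_\kappa\!\left(T-s,\tfrac{x+y}{2}-z\right),\qquad T:=T_\kappa(t,x-y),
\]
which one checks by noting that both sides equal $1/2$ exactly when $z$ lies in the interval of half-length $\kappa(t-s)-|x-y|/2=\kappa(T-s)$ centered at $(x+y)/2$, and vanish otherwise (in particular both vanish when $s>T$). Plugging this into the abstract two-point inequality \eqref{E:TP-Up} (and its lower counterpart and quasi-linear equality) converts the double integral in $(s,z)$ into the one-dimensional convolution $\tfrac12(f\star G_\kappa)(T,(x+y)/2)$, while the $\Vip^2\,\Theta$ term becomes $\tfrac{\kappa\Lip_\rho^2\Vip^2}{4}T_\kappa^2(t,x-y)$ by \eqref{E4:Theta}. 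Once (i)--(iii) are in place, parts (1)--(5) of the theorem are immediate specializations of the corresponding parts of Theorem \ref{T3:ExUni}. The main obstacle is the inductive evaluation of $\calL_n$ in (ii): the combinatorics must produce precisely the coefficients of $I_0$, and tracking the nested light-cone domains while applying Fubini requires care; everything else is bookkeeping once this closed form is in hand.
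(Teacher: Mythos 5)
Your proposal is correct and follows essentially the same route as the paper: reduce to Theorem \ref{T3:ExUni} with $\theta\equiv 1$, verify Conditions \conRef{G} and \conRef{W} through the explicit light-cone computations (the induction for $\calL_n$ is Proposition \ref{P4:K-W}, the local boundedness of $J_0^2\star G_\kappa^2$ is Lemma \ref{L:InDt}, and Assumption \ref{A:Cont-Bdd} is Proposition \ref{P4:G-Cont-Bd}), then convert the two-point terms via the product identity \eqref{E4:GG}. The only blemishes are cosmetic and harmless: with $\theta\equiv 1$ the kernel $\calL_n(t,x;s,y)$ depends on $(s,y)$ alone, so its argument should read $\bigl((\kappa s)^2-y^2\bigr)^n$ rather than $\bigl((\kappa(t-s))^2-(x-y)^2\bigr)^n$, and $\sum_n\sqrt{B_n(t)}$ equals $e^{|\lambda|\sqrt{\kappa/8}\,t}$ rather than a $\cosh$ (finiteness is all that is needed).
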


The proof of this theorem is given at the end of Section \ref{ss:wave-lp}.

\begin{corollary}[Constant initial data]\label{C4:HomeInit}
Suppose that $\rho^2(x)=\lambda^2(\vv^2+x^2)$ with $\lambda\ne 0$.
Let $\calH(t)$ be defined as above.
If $g(x)\equiv w$ and $\mu(\ud x) = \widetilde{w}\; \ud x$ with
$w,\widetilde{w}\in \R$, then:
\begin{enumerate}[(1)]
 \item For all $t\ge 0$ and $x\in\R$,
\[\Norm{u(t,x)}_2^2 =
w^2
+ \left(w^2 + \vv^2+ \frac{4\kappa
\widetilde{w}^2}{\lambda^2}\right)\calH(t)
+\frac{2\sqrt{2\kappa} w\widetilde{w}}{|\lambda|}
\sinh\left(\frac{\sqrt{\kappa} |\lambda| t}{\sqrt{2}}\right).\]
In particular,  \[\Norm{u(t,x)}_2^2 =
\begin{cases}
w^2 \left(\calH\left(t\right)+1\right)
& \text{if $\vv=\widetilde{w}=0$,}\\[0.5em] \displaystyle
 \frac{4\kappa
\widetilde{w}^2}{\lambda^2}
\; \calH(t)
&\text{if $\vv=w=0$.}
\end{cases}\]
\item For all $t\ge 0$ and $x,y\in\R$, set $T=T_ \kappa(t,x-y)$. Then
\begin{multline*}
\E\left[u(t,x)u(t,y)\right]=w^2 + \kappa \widetilde{w}
\left(t-T\right)
\left(2w
+\kappa\widetilde{w}(t+T)\right)
\\+ \left(w^2 + \vv^2+ \frac{4\kappa
\widetilde{w}^2}{\lambda^2}\right)
\calH\left(T\right)
+\frac{2\sqrt{2\kappa} w\widetilde{w}}{|\lambda|}
\sinh\left(\frac{\sqrt{\kappa}
|\lambda|}{\sqrt{2}}T\right).
\end{multline*}
In particular,
\[\E\left[u(t,x)u(t,y)\right] =
\begin{cases}
w^2 \left(\calH\left(T\right)+1\right)
& \text{if $\vv=\widetilde{w}=0$,}\\[0.5em]  \displaystyle
 \frac{4\kappa
\widetilde{w}^2}{\lambda^2}\;
\calH\left(T\right)
+\kappa^2\widetilde{w}^2\left(t^2-T^2\right)
&\text{if $\vv=w=0$.}
\end{cases}\]
\end{enumerate}
\end{corollary}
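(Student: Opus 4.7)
The plan is to apply the quasi-linear formulas \eqref{E4:SecMom} and \eqref{E4:TP} from Theorem \ref{T4:ExUni} directly. Because the initial data are constants, both $J_0$ and the second-moment function become functions of $t$ alone, so every spatial integral in the relevant convolutions reduces to multiplication by an integral of a kernel. Concretely, from \eqref{E4:J0-Wave},
\[
 J_0(t,x)=\tfrac{1}{2}(w+w)+\widetilde{w}\int_\R G_\kappa(t,y)\,\ud y = w+\widetilde{w}\,\kappa t,
\]
independent of $x$. The second ingredient is the identity
\[
 \int_\R \calK(s,z)\,\ud z = \calH'(s) = \alpha\sinh(\alpha s), \qquad \alpha:=|\lambda|\sqrt{\kappa/2},
\]
obtained by differentiating \eqref{E4:Sinh} in $t$ (noting that $\calH(t)=(1\star \calK)(t,x)$ is $x$-independent, so its $t$-derivative equals the mass of $\calK(s,\cdot)$).

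For part (1), substituting into \eqref{E4:SecMom} gives
\[
 (J_0^2\star\calK)(t,x) = \int_0^t (w+\widetilde{w}\kappa s)^2 \,\alpha\sinh(\alpha(t-s))\,\ud s.
\]
I expand the polynomial in $s$ and evaluate the three resulting integrals by integration by parts (twice for the quadratic term). The arithmetic simplifications rest on $\kappa/\alpha=\sqrt{2\kappa}/|\lambda|$ and $2\kappa^2/\alpha^2 = 4\kappa/\lambda^2$, producing exactly the coefficients $4\kappa/\lambda^2$ and $2\sqrt{2\kappa}/|\lambda|$ appearing in the statement. Adding $J_0^2(t,x)=w^2+2w\widetilde{w}\kappa t+\widetilde{w}^2\kappa^2 t^2$ and $\vv^2\calH(t)$, the $2w\widetilde{w}\kappa t$ and $\widetilde{w}^2\kappa^2 t^2$ terms cancel precisely the polynomial tails generated by the integration-by-parts computations, leaving the stated formula. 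The special cases $\vv=\widetilde{w}=0$ and $\vv=w=0$ then drop out by inspection.

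For part (2), formula \eqref{E4:TP} requires $(f\star G_\kappa)(T,(x+y)/2)$ with $f(s,z)=\Norm{u(s,z)}_2^2$ and $T=T_\kappa(t,x-y)$. By part (1), $f$ depends only on $s$, so
\[
 (f\star G_\kappa)\!\left(T,\tfrac{x+y}{2}\right) = \int_0^T f(s)\int_\R G_\kappa(T-s,z)\,\ud z\,\ud s = \kappa\int_0^T f(s)(T-s)\,\ud s.
\]
Writing $f(s)=w^2+A\calH(s)+B\sinh(\alpha s)$ with $A=w^2+\vv^2+4\kappa\widetilde{w}^2/\lambda^2$ and $B=2\sqrt{2\kappa}\,w\widetilde{w}/|\lambda|$, the primitives $\int_0^T(T-s)\,\ud s$, $\int_0^T(T-s)\cosh(\alpha s)\,\ud s$, and $\int_0^T(T-s)\sinh(\alpha s)\,\ud s$ are elementary. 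Multiplying by $\lambda^2/2$ and combining with the $\tfrac{\kappa\lambda^2\vv^2}{4}T^2$ term in \eqref{E4:TP}, the $T^2$ contributions collect into $-\kappa^2\widetilde{w}^2 T^2$ and the linear-$T$ contribution into $-2w\widetilde{w}\kappa T$. Adding these to $J_0(t,x)J_0(t,y) = w^2+2w\widetilde{w}\kappa t+\widetilde{w}^2\kappa^2 t^2$, the polynomial pieces factorise as $\kappa\widetilde{w}(t-T)(2w+\kappa\widetilde{w}(t+T))$, yielding the claimed identity. The consistency check $T=t$ when $x=y$ recovers part (1); the two special cases follow by direct substitution.

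The main obstacle is bookkeeping: tracking the many constants through three integration-by-parts computations and checking that the polynomial contributions from $J_0^2(t,x)$ (respectively $J_0(t,x)J_0(t,y)$) cancel exactly against the polynomial tails produced by the hyperbolic integrals, so that only the displayed closed-form expression survives. Conceptually, no new analytic input is required beyond Theorem \ref{T4:ExUni} and the derivative identity $\int_\R \calK(s,z)\,\ud z=\calH'(s)$.
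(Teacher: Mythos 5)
Your proposal is correct and follows essentially the same route as the paper: the paper's (very terse) proof also reduces everything to the formulas \eqref{E4:SecMom} and \eqref{E4:TP}, the $x$-independence of $J_0$ and of the second moment, the total mass $\int_\R \calK(s,z)\,\ud z = |\lambda|\sqrt{\kappa/2}\,\sinh\bigl(|\lambda|\sqrt{\kappa/2}\,s\bigr)$ from Lemma \ref{L:Ht} (which you recover equivalently by differentiating \eqref{E4:Sinh}), and the elementary $\sinh/\cosh$-against-polynomial integrals of Lemma \ref{L:IntShCh} (your integrations by parts). Your computation of the cancellations and of the factorisation $\kappa\widetilde{w}(t-T)(2w+\kappa\widetilde{w}(t+T))$ checks out.
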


\begin{proof}
(1) In this case, $J_0(t,x)= w+ \kappa \widetilde{w} t$.
The formula for $\Norm{u(t,x)}_2^2$ follows from the moment
formula \eqref{E4:SecMom} and the integrals in Lemmas \ref{L:Ht}
and \ref{L:IntShCh}.

(2) The formulas follow from \eqref{E4:TP} and (1), and the integrals in
\eqref{E4:Theta} and Lemma \ref{L:IntShCh}.
\end{proof}

\begin{corollary}[Dirac delta initial velocity]\label{C4:DeltaInit}
Suppose that $\rho^2(x)=\lambda^2(\vv^2+x^2)$ with $\lambda\ne 0$.
If $g\equiv 0$ and $\mu=\delta_0$, then for all $t\ge 0$ and $x,y\in\R$,
\[\E\left[u(t,x)u\left(t,y\right)\right]
=
\frac{1}{\lambda^2}\calK\left(T_\kappa\left(t,x-y\right),\frac{x+y}{2}\right)
+
\vv^2 \calH\left(T_\kappa\left(t,x-y\right)\right).
\]
In particular, $\Norm{u(t,x)}_2^2 = \frac{1}{\lambda^2} \calK(t,x) +
\vv^2 \calH(t)$.
\end{corollary}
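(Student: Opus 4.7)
The strategy is to specialize the moment formulas \eqref{E4:SecMom} and \eqref{E4:TP} of Theorem~\ref{T4:ExUni}(5) to this choice of initial data. Since $g\equiv 0$ and $\mu=\delta_0$, one has $J_0(t,x)=(\delta_0\star G_\kappa(t,\cdot))(x)=G_\kappa(t,x)$, and in particular $J_0^2=G_\kappa^2$. Two elementary identities for the one-dimensional wave kernel then do the rest.

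\emph{Second moment.} Because $\theta\equiv 1$, the operator $\rhd$ collapses to the usual space-time convolution $\star$, and the defining series $\calK=\sum_{n\ge 0}\calL_n$ with $\calL_0=\lambda^2 G_\kappa^2$ is a genuine geometric series of iterated convolutions. Shifting the index yields the Volterra-type identity
\[
\calK \;=\; \lambda^2 G_\kappa^2 \;+\; \lambda^2\,G_\kappa^2\star\calK,
\]
equivalently $G_\kappa^2+G_\kappa^2\star\calK=\lambda^{-2}\calK$. Since $J_0^2=G_\kappa^2$, this is exactly what \eqref{E4:SecMom} requires to produce $\Norm{u(t,x)}_2^2=\lambda^{-2}\calK(t,x)+\vv^2\calH(t)$, which is the ``In particular'' assertion (and also the restriction $x=y$ of the two-point formula).

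\emph{Two-point function.} The two observations that drive the computation are that $G_\kappa^2=\tfrac12 G_\kappa$ (since $G_\kappa$ is a half-indicator), and the support identity
\[
G_\kappa(t,x)\,G_\kappa(t,y) \;=\; G_\kappa^2\!\left(T_\kappa(t,x-y),\tfrac{x+y}{2}\right),
\]
which holds because both sides equal $\tfrac14$ precisely on $\{\max(|x|,|y|)\le \kappa t\}$ (use $|x+y|+|x-y|=2\max(|x|,|y|)$). Writing $T:=T_\kappa(t,x-y)$ and substituting $f=\lambda^{-2}\calK+\vv^2\calH$ into \eqref{E4:TP}, the non-$\vv^2$ contribution to the right-hand side becomes
\[
J_0(t,x)J_0(t,y)+\tfrac12\bigl(\calK\star G_\kappa\bigr)\bigl(T,\tfrac{x+y}{2}\bigr)
\;=\; G_\kappa^2\bigl(T,\tfrac{x+y}{2}\bigr)+\bigl(G_\kappa^2\star\calK\bigr)\bigl(T,\tfrac{x+y}{2}\bigr)
\;=\; \tfrac{1}{\lambda^2}\calK\bigl(T,\tfrac{x+y}{2}\bigr),
\]
using $G_\kappa=2G_\kappa^2$ and the Volterra identity above.

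\emph{The $\vv^2$ contribution.} Since $\calH(s)$ is spatially constant and $\int_\R G_\kappa(r,y)\,dy=\kappa r$, one has $(\calH\star G_\kappa)(T,\tfrac{x+y}{2})=\kappa\int_0^T\calH(s)(T-s)\,ds$, and it remains to check
\[
\tfrac{\kappa\lambda^2}{4}T^2+\tfrac{\lambda^2\kappa}{2}\int_0^T\calH(s)(T-s)\,ds \;=\; \calH(T).
\]
With $c:=|\lambda|\sqrt{\kappa/2}$ (so $c^2=\lambda^2\kappa/2$) and $\calH(s)=\cosh(cs)-1$, the elementary integral $\int_0^T\cosh(cs)(T-s)\,ds=(\cosh(cT)-1)/c^2$ gives the identity after the exact cancellation $\tfrac{\lambda^2\kappa}{2}\cdot(-\tfrac{T^2}{2})+\tfrac{\kappa\lambda^2}{4}T^2=0$. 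Summing the two pieces yields the displayed two-point formula. The only step requiring even a little vigilance is the support identity $J_0(t,x)J_0(t,y)=G_\kappa^2(T,(x+y)/2)$; everything else is direct unpacking of definitions.
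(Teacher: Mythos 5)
Your proposal is correct and follows essentially the same route as the paper: both reduce to $J_0=G_\kappa$, use the identity $(\calK\star\calL_0)=\calK-\calL_0$ (your ``Volterra identity'' is exactly \eqref{E4:K-K0} from Proposition \ref{P4:K-W}) together with \eqref{E4:GG} to handle the non-$\vv^2$ part, and evaluate the $\vv^2$ terms via the elementary integral of Lemma \ref{L:IntShCh}. The only cosmetic difference is that you re-derive the support identity $G_\kappa(t,x)G_\kappa(t,y)=G_\kappa^2\left(T_\kappa(t,x-y),\frac{x+y}{2}\right)$ from the relation $|x+y|+|x-y|=2\max(|x|,|y|)$ instead of quoting Lemma \ref{L:Theta} directly.
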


\begin{proof}
In this case, $J_0(t,x)=G_\kappa(t,x)$ and so $\lambda^2 J_0^2(t,x) =
\calL_0(t,x)$.
Set $T=T_\kappa(t,x-y)$ and $\bar{x}=(x+y)/2$.
By \eqref{E4:SecMom} and Proposition \ref{P4:K-W}, $\Norm{u(t,x)}_2^2 =
\frac{1}{\lambda^2}\calK(t,x)+\vv^2 \calH(t)$. By \eqref{E4:TP} and
\eqref{E4:GG},
\begin{align*}
\E\left[u(t,x)u\left(t,y\right)\right] = &\frac{1}{2}
G_\kappa\left(T,\bar{x}\right) +
\lambda^2\vv^2 \Theta_\kappa\left(t,x,y\right)
\\&+ \frac{\lambda^2}{2} \int_0^{T}\ud
s\int_\R \ud z\:
\left(\frac{1}{\lambda^2}\calK(s,z) + \vv^2\calH(s) \right)
G_{\kappa}\left(T-s,\bar{x}-z\right).
\end{align*}
By \eqref{E4:K-K0}, the double integral with $\lambda^2/2$ in
the above formula equals
\[\frac{1}{\lambda^2}\calK\left(T,\bar{x}\right)
-\frac{1}{2} G_\kappa\left(T,\bar{x}\right) + I,
\]
where \[I=\frac{\lambda^2\vv^2}{2} \int_0^T  \ud s\: \calH(s)\int_\R\ud z\:
G_{\kappa}\left(T-s,\bar{x}-z\right).\]
Now let us evaluate the integral $I$. The $\ud
z$--integral is equal to $\kappa(T-s)$. By \eqref{E4:Sinh} and Lemma
\ref{L:IntShCh},
\[I = \frac{\lambda^2\vv^2}{2}\int_0^{T}\ud s\:
\calH(s) \; \kappa \left(T-s\right)=
\vv^2 \calH\left(T\right) -\frac{\kappa \lambda^2\vv^2
 }{4}T^2=
\vv^2 \calH\left(T\right) -\lambda^2\vv^2
\Theta_\kappa\left(t,x,y\right).\]
Finally, the corollary is proved by combining these terms.
\end{proof}

\begin{example}\label{Ex4:wave-g-1/4}
Let $g(x)= |x|^{-1/4}$ and $\mu \equiv 0$. Clearly, $g\in
L_{loc}^2\left(\R\right)$ and
\[J_0^2(t,x) = \frac{1}{4}\left(\frac{1}{|x+\kappa t|^{1/4}}+\frac{1}{|x-\kappa
t|^{1/4}}\right)^2.\]
The function $J_0^2(t,x)$ equals $+\infty$ on the
characteristic lines $x=\pm \kappa t$ that originate at $(0,0)$, where the
singularity of $g$ occurs. Nevertheless,
the stochastic integral part $I(t,x)$ is well defined for all
$(t,x)\in\R_+^*\times\R$ and the random field solution $u(t,x)$ in
the sense of Definition \ref{D3:Solution} does
exist according to Theorem \ref{T4:ExUni}. We note that the argument for the
heat equation in Theorem \ref{T3:ExUni}, which is
based on Condition \conRef{H}, cannot be used here because of the singularity of
$J_0(t,x)$ at certain points.
However, the wave kernel function satisfies Condition \conRef{W}, which is not
satisfied by the heat kernel.
\end{example}

\begin{figure}[h!tbp]
\center
\includegraphics[scale=0.85]{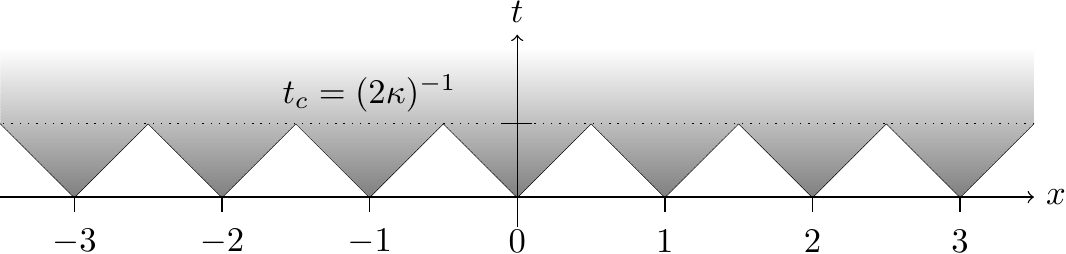}
\caption{When $g(x)= \sum_{n\in
\bbN}2^{-n}\left(|x-n|^{-1/2}+|x+n|^{-1/2}\right)$ and $\mu \equiv 0$,
the random field solution $u(t,x)$ is only defined
in the unshaded regions and in particular not for $t>t_c =(2\kappa)^{-1}$.}
\label{F4:wave-g-1/2-N}
\end{figure}

\begin{example}\label{Ex4:wave-g-1/2}
Let $g(x)= |x|^{-1/2}$ and $\mu \equiv 0$. Clearly, $g\not\in
L_{loc}^2\left(\R\right)$. So Theorem \ref{T4:ExUni} does not apply. In this
case,
the solution $u(t,x)$ is well defined outside of the triangle $\kappa\: t\ge
|x|$. But because
\[J_0^2(t,x) = \frac{1}{4}\left(\frac{1}{|x+\kappa t|^{1/2}}+\frac{1}{|x-\kappa
t|^{1/2}}\right)^2,\]
and this function is not locally integrable over domains that intersect the
characteristic lines $x=\pm \kappa t$ (see Assumption \ref{A:InDt}),
the random field solution exists only in the two ``triangles'' $\kappa \: t\le
|x|$.
Another
example is shown in Figure \ref{F4:wave-g-1/2-N}.
\end{example}

\subsection{Some lemmas and propositions for the existence theorem}
\label{ss:wave-lp}
Define the backward space-time cone:
\[\Lambda(t,x)= \left\{\left(s,y\right)\in\R_+\times\R:\; 0\le s\le t,
\;\;|y-x|\le
\kappa (t-s) \right\}\]
and the wave kernel function can be equivalently written
as $G_\kappa\left(t-s,x-y\right) =
\frac{1}{2}1_{\Lambda(t,x)}\left(s,y\right)$.
The change of variables $u=\kappa s-y$, $w=\kappa s +y$ will play an important
role: see Figure \ref{F4:ChangeVar}.

\begin{figure}[h!tbp]
\center
\includegraphics[scale=0.75]{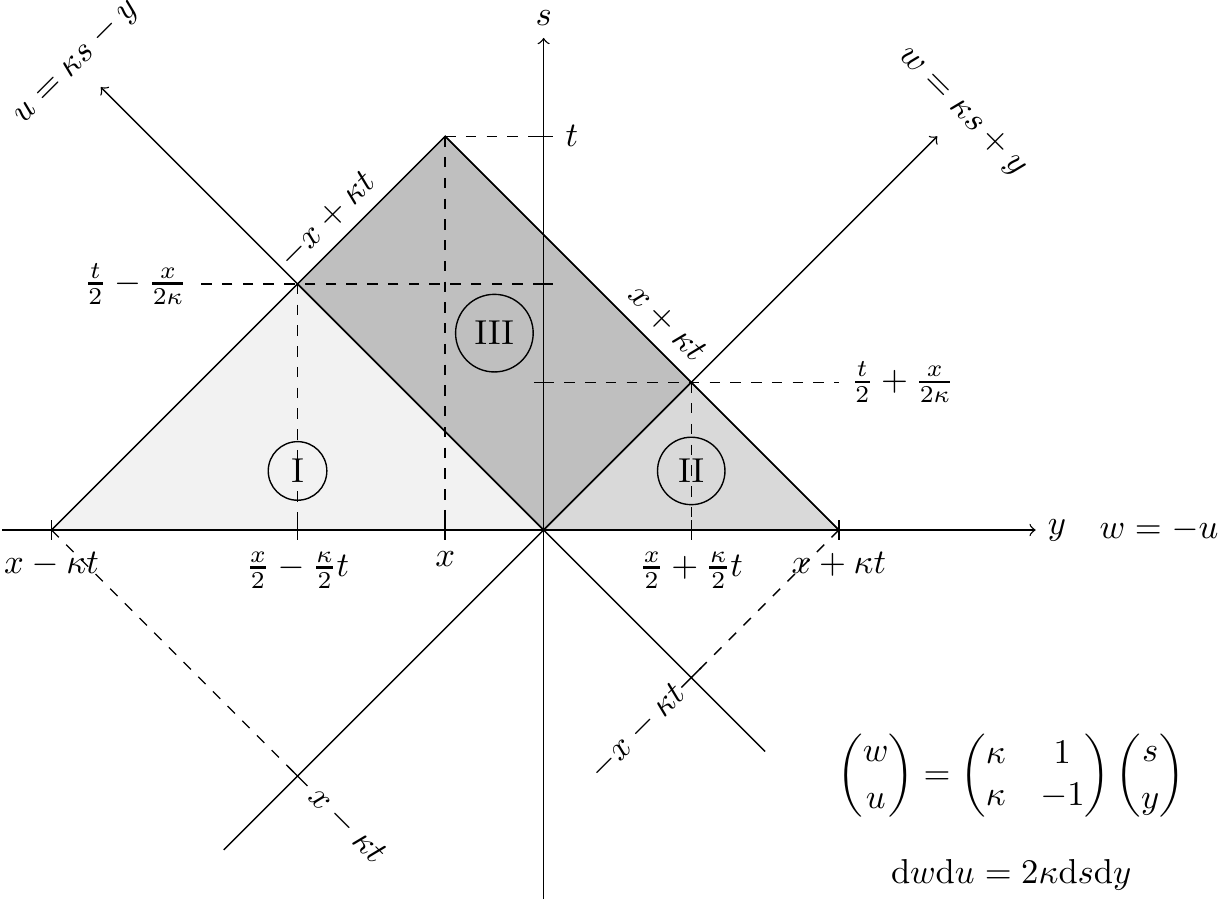}
\caption{Change variables for the case where  $|x|\le \kappa t$.}
\label{F4:ChangeVar}
\end{figure}

For all $n\in\bbN^*$ and $(t,x)\in\R_+^*\times\R$, recall that
$\calL_0(t,x;\lambda)= \lambda^2 G_\kappa^2(t,x)$
and $\calL_n(t,x;\lambda) = (\calL_0 \star \dots \star
\calL_0)(t,x)$, where there are $n+1$ convolutions
of $\calL_0(\cdot,\circ;\lambda)$.

\begin{proposition}\label{P4:K-W}
For all $n\in\bbN$, and $(t,x)\in\R_+^*\times\R$,
\begin{gather} \label{E4:Ln}
 \calL_n(t,x) =
\begin{cases}
  \frac{\lambda^{2n+2} \left((\kappa t)^2-x^2\right)^n}{2^{3n+2}
(n!)^2 \kappa^n} &\text{if $-\kappa t\le x\le \kappa t$,}\cr
0&\text{otherwise},
\end{cases}\\
\label{E4:K-Sum-Kn}
 \calK(t,x) = \sum_{n=0}^{\infty} \calL_n(t,x),\;\text{and}
\\
\label{E4:K-K0}
 \left(\calK \star \calL_0\right) (t,x) = \calK(t,x) -\calL_0(t,x)\:.
\end{gather}
Moreover, there are non-negative functions $B_n(t)$ such that for all
$n\in\bbN$, the function $B_n(t)$ is nondecreasing in $t$ and $\calL_n\le
\calL_0(t,x) B_n(t)$ for all $(t,x)\in\R_+^*\times\R$, and
\[\sum_{n=1}^\infty \left(B_n(t)\right)^{1/m}<+\infty,\quad
\text{for all $m\in\bbN^*$.}
\]
\end{proposition}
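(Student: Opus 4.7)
The plan is to prove \eqref{E4:Ln} by induction on $n$, then to sum the series and recognise it as the expansion of $I_0$ in order to obtain \eqref{E4:K-Sum-Kn}, to deduce \eqref{E4:K-K0} from the recursion $\calL_{n+1}=\calL_n\star\calL_0$, and finally to read off the Gronwall-type bound directly from the explicit formula.

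The base case $n=0$ is immediate, since $G_\kappa(t,x)=\tfrac{1}{2}1_{\{|x|\le\kappa t\}}$ gives $\calL_0(t,x)=(\lambda^2/4)1_{\{|x|\le\kappa t\}}$, which matches \eqref{E4:Ln}. For the inductive step, the integrand of
\[
 \calL_{n+1}(t,x)=\int_0^t ds\int_\R dy\,\calL_n(t-s,x-y)\,\calL_0(s,y)
\]
vanishes unless $(s,y)\in\{|y|\le\kappa s\}\cap\{|x-y|\le\kappa(t-s)\}$, which forces $|x|\le\kappa t$. I would change to the light-cone coordinates $u=\kappa s-y$, $w=\kappa s+y$ of Figure~\ref{F4:ChangeVar}, with Jacobian $ds\,dy=\tfrac{1}{2\kappa}du\,dw$. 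Setting $a:=\kappa t-x$ and $b:=\kappa t+x$, the support becomes the rectangle $[0,a]\times[0,b]$, and a short calculation gives $(\kappa(t-s))^2-(x-y)^2=(a-u)(b-w)$. Substituting the inductive hypothesis, the double integral separates and yields
\[
 \calL_{n+1}(t,x)=\frac{1}{2\kappa}\cdot\frac{\lambda^2}{4}\cdot\frac{\lambda^{2n+2}}{2^{3n+2}(n!)^2\kappa^n}\cdot\frac{a^{n+1}b^{n+1}}{(n+1)^2}=\frac{\lambda^{2(n+1)+2}((\kappa t)^2-x^2)^{n+1}}{2^{3(n+1)+2}((n+1)!)^2\kappa^{n+1}},
\]
which is \eqref{E4:Ln} at level $n+1$.

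For \eqref{E4:K-Sum-Kn}, pulling $\lambda^2/4$ out of $\sum_n\calL_n$ rewrites the $n$-th summand as $(z^2/4)^n/(n!)^2$ with $z=\sqrt{\lambda^2((\kappa t)^2-x^2)/(2\kappa)}$; the power series \eqref{E4:In} for $I_0$ then reproduces \eqref{E4:BesselI} and in particular shows convergence of the series defining $\calK$. For \eqref{E4:K-K0}, non-negativity and Tonelli allow exchanging sum and convolution, so $\calK\star\calL_0=\sum_{n\ge 0}\calL_n\star\calL_0=\sum_{n\ge 0}\calL_{n+1}=\calK-\calL_0$.

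Finally, reading \eqref{E4:Ln} directly, the ratio $\calL_n(t,x)/\calL_0(t,x)$ on the cone is maximised at $x=0$, giving
\[
 \calL_n(t,x)\le\calL_0(t,x)\,B_n(t),\qquad B_n(t):=\frac{(\lambda^2\kappa t^2)^n}{8^n(n!)^2},
\]
with $B_0\equiv 1$ and $B_n(\cdot)$ non-decreasing. For any $m\in\bbN^*$, Stirling's formula (or the ratio test) shows $\sum_n B_n(t)^{1/m}=\sum_n(\lambda^2\kappa t^2/8)^{n/m}/(n!)^{2/m}<+\infty$. The only non-trivial step is the change of variables in the induction, which is entirely standard for the one-dimensional wave equation; everything else is bookkeeping.
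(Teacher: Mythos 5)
Your proposal is correct and follows essentially the same route as the paper: induction on $n$ via the light-cone change of variables $u=\kappa s-y$, $w=\kappa s+y$, recognition of the series for $I_0$ to get \eqref{E4:K-Sum-Kn} and \eqref{E4:K-K0}, and the same choice $B_n(t)=(\lambda^2\kappa t^2)^n/(8^n(n!)^2)$ with the ratio test. The details you supply (the identity $(\kappa(t-s))^2-(x-y)^2=(a-u)(b-w)$ and the factorisation over the rectangle $[0,a]\times[0,b]$) are exactly the computation the paper compresses into one display.
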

\begin{proof}
Formula \eqref{E4:Ln} clearly holds for $n=0$. By induction, suppose that it
is true for $n$. Now we evaluate
$\calL_{n+1}(t,x)$ from the definition and a change of variables (see
Figure \ref{F4:ChangeVar}):
\begin{align*}
\calL_{n+1}(t,x)&=\left(\calL_0\star\calL_n\right)(t,x)=\frac{\lambda^{2n+4
} } {
2^{3n+4}(n!)^2\kappa^n}\: \frac{1}{2\kappa}
\int_0^{x-\kappa t}\ud u\: u^n \int_0^{x+\kappa t}\ud w\: w^n  \\
&=
\frac{\lambda^{2(n+1)+2} \left((\kappa t)^2-x^2\right)^{n+1}}{2^{3(n+1)+2}
((n+1)!)^2 \kappa^{n+1}}
\end{align*}
for $-\kappa t\le x\le \kappa t$, and $\calL_{n+1}(t,x)=0$ otherwise. This
proves \eqref{E4:Ln}.
The series in \eqref{E4:K-Sum-Kn} converges to the modified Bessel
function of order zero by \eqref{E4:In}. As a direct consequence, we have
\eqref{E4:K-K0}.
Take $B_n(t) = \frac{\lambda^{2n} (\kappa t)^{2n}}{2^{3n} (n!)^2
\kappa^n}$,
which is non-negative and nondecreasing in $t$.
Then clearly, $\calL_n(t,x)\le \calL_0(t,x) B_n(t)$. To show the convergence,
by the ratio test, for all $m\in\bbN^*$, we have that
\[\frac{\left(B_n(t)\right)^{1/m}}{\left(B_{n-1}(t)\right)^{1/m}}
=\left(\frac{\lambda \sqrt{\kappa}\: t}{2\sqrt{2}}\right)^{\frac{2}{m}}
\left(\frac{1}{n}
\right)^{\frac{2}{m}}
\rightarrow 0,\]
as $n\rightarrow\infty$. This completes the proof.
\end{proof}

\begin{lemma}\label{L:K-W}
The kernel function $\calK(t,x)$ defined in \eqref{E4:BesselI} is strictly
increasing in $t$ for $x\in\R$ fixed and decreasing in $|x|$ for $t>0$ fixed.
Moreover, for all $\left(s,y\right)\in [0,t]\times\R$,
we have that
\[\frac{\lambda^2}{2} G_\kappa\left(s,y\right)\le
\calK\left(s,y\right)
\le
\frac{\lambda^2}{2} I_0\left(|\lambda|\sqrt{\kappa/2}\: t\right)
G_\kappa\left(s,y\right).\]
\end{lemma}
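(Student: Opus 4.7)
\textbf{Plan for the proof of Lemma \ref{L:K-W}.}
The key ingredient is the elementary fact that $I_0$, given by the power series \eqref{E4:In} with $n=0$, is positive, strictly increasing and convex on $[0,\infty)$ with $I_0(0)=1$; all these properties are immediate from the non-negativity (and strict positivity from order $k=1$ on) of the series coefficients. Everything else in the lemma will follow from this together with the piecewise definition \eqref{E4:BesselI} and a careful look at the argument
\[
A(t,x)\;:=\;\sqrt{\frac{\lambda^{2}\bigl((\kappa t)^{2}-x^{2}\bigr)}{2\kappa}}
\]
of $I_0$ inside the light-cone $|x|\le\kappa t$.

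For the monotonicity in $t$ with $x\in\R$ fixed, I would simply observe that $A(t,x)$ is strictly increasing in $t$ on $\{t:\kappa t>|x|\}$, so the composition $\frac{\lambda^2}{4} I_0(A(t,x))$ is strictly increasing there; on the complementary range $0\le t\le |x|/\kappa$ the function $\calK(t,x)$ is identically zero, and at the boundary $\kappa t=|x|$ it jumps from $0$ to $\lambda^2/4=\frac{\lambda^2}{4}I_0(0)$, so $t\mapsto \calK(t,x)$ is strictly increasing on the whole set where it is not forced to vanish. The decrease in $|x|$ for fixed $t>0$ is analogous: on $|x|<\kappa t$, $A(t,x)$ is strictly decreasing in $|x|$, and $\calK$ vanishes for $|x|\ge \kappa t$.

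For the two-sided bound, the observation is that $G_\kappa(s,y)=\tfrac12\,\mathbf{1}_{\{|y|\le\kappa s\}}$, so both sides of the claimed inequality vanish off the cone $|y|\le\kappa s$, and it suffices to prove the bounds on this cone. On the cone, $\frac{\lambda^2}{2}G_\kappa(s,y)=\frac{\lambda^2}{4}$, and the lower bound reduces to $I_0(A(s,y))\ge 1=I_0(0)$, which holds since $A(s,y)\ge 0$ and $I_0$ is increasing. For the upper bound, use the monotonicity of $I_0$ together with
\[
A(s,y)\;\le\;\sqrt{\tfrac{\lambda^{2}(\kappa s)^{2}}{2\kappa}}\;=\;|\lambda|\sqrt{\kappa/2}\,s\;\le\;|\lambda|\sqrt{\kappa/2}\,t,
\]
valid on the cone for $s\le t$. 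Hence $I_0(A(s,y))\le I_0(|\lambda|\sqrt{\kappa/2}\,t)$, which gives exactly $\calK(s,y)\le \frac{\lambda^2}{2}I_0(|\lambda|\sqrt{\kappa/2}\,t)\,G_\kappa(s,y)$ on the cone.

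No step here looks like a real obstacle: the only small care is the behaviour at the boundary of the light-cone (where the indicator in $G_\kappa$ and the indicator in the definition of $\calK$ both switch) and the phrase ``strictly increasing in $t$'', which should be understood on the set where $\calK(t,x)$ is not forced to vanish by the support constraint. The proof will amount to combining \eqref{E4:BesselI}, the explicit form of $G_\kappa$, and the stated monotonicity of $I_0$.
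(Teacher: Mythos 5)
Your argument is correct and is essentially the paper's own proof, just written out in full: the paper also deduces everything from the power series \eqref{E4:In} for $I_0$ (positivity, monotonicity, $I_0(0)=1$) and the form of the argument $\sqrt{\lambda^2((\kappa t)^2-x^2)/(2\kappa)}$ inside the light cone. Your added care about the boundary of the cone and the reading of ``strictly increasing'' on the set where $\calK$ is not forced to vanish is a reasonable clarification of the (slightly loose) statement, not a departure from the paper's route.
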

\begin{proof}
The first part is true by \eqref{E4:In}.
As for the inequalities, the upper bound follows from the first part.
The lower bound is clear since $I_0(0)= 1$ by \eqref{E4:In}.
\end{proof}


\begin{lemma}\label{L:Theta}
Recall the definition of $T_k(t,x)$ in \eqref{E4:Ttx}.
For all $t\in\R_+$, and $x,y\in\R$,
\begin{gather}\label{E4:GG}
G_\kappa(t-s,x-z)G_\kappa\!\left(t-s,y-z\right)\! =\!
\frac{1}{2}
G_\kappa\left(\!T_\kappa\left(t,x-y\right)-s,\frac{x+y}{2}-z\!\right)\!,\\
\label{E4:GG-1}
\int_{\R} \ud z \: G_\kappa(t,x-z)G_\kappa(t,y-z) = \frac{\kappa}{2}\:
T_\kappa \left(t,x-y\right),\;\text{and}
\\
\label{E4:GG-2}
\iint_{\R_+\times\R}\ud s \ud z\: G_\kappa(t-s,x-z)G_\kappa\left(t-s,y-z\right)
 =
\frac{\kappa}{4} \: T^2_\kappa \left(t,x-y\right).
\end{gather}
\end{lemma}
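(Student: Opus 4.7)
The three identities all reduce to elementary manipulations of the indicator function representation of the wave kernel, so my plan is to prove (i) directly and then deduce (ii) and (iii) as immediate consequences.

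\textbf{Step 1 (Reformulate the product as an indicator).} Using $G_\kappa(\tau,u)=\tfrac{1}{2}H(\tau)\,1_{\{|u|\le \kappa\tau\}}$, for $s\le t$ the product becomes
\begin{equation*}
G_\kappa(t-s,x-z)G_\kappa(t-s,y-z)=\tfrac{1}{4}\,1_{\{|x-z|\le \kappa(t-s)\}}\,1_{\{|y-z|\le \kappa(t-s)\}}.
\end{equation*}
The intersection of the two $z$-intervals is $[\max(x,y)-\kappa(t-s),\,\min(x,y)+\kappa(t-s)]$, which is non-empty iff $|x-y|\le 2\kappa(t-s)$, i.e.\ iff $s\le t-|x-y|/(2\kappa)=T_\kappa(t,x-y)$.

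\textbf{Step 2 (Rewrite as a single wave-kernel indicator).} Setting $\bar{x}=(x+y)/2$, the joint condition $|x-z|\le \kappa(t-s)$ and $|y-z|\le \kappa(t-s)$ is equivalent to
\begin{equation*}
|\bar{x}-z|\le \kappa(t-s)-\tfrac{|x-y|}{2}=\kappa\bigl(T_\kappa(t,x-y)-s\bigr),
\end{equation*}
which is the defining inequality for $2G_\kappa(T_\kappa(t,x-y)-s,\bar{x}-z)$. Combined with Step~1 this gives exactly \eqref{E4:GG}. This is the only step requiring any care (the balance between the $|x-y|/2$ shift and the contraction of $t-s$ to $T_\kappa(t,x-y)-s$), and it is essentially a parallelogram/cone-intersection computation; I expect no real obstacle beyond bookkeeping.

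\textbf{Step 3 (Deduce (ii) and (iii) by integration).} For \eqref{E4:GG-1}, specialize \eqref{E4:GG} at $s=0$ and integrate in $z$, using the elementary identity $\int_{\R}G_\kappa(\tau,u)\,du=\kappa\tau$ valid for $\tau\ge 0$; this yields $\tfrac{1}{2}\cdot\kappa\,T_\kappa(t,x-y)=\tfrac{\kappa}{2}T_\kappa(t,x-y)$. For \eqref{E4:GG-2}, integrate \eqref{E4:GG} first in $z$ over $\R$ and then in $s$ over $[0,T_\kappa(t,x-y)]$ (the indicator forces this truncation), obtaining
\begin{equation*}
\int_0^{T_\kappa(t,x-y)}\tfrac{\kappa}{2}\bigl(T_\kappa(t,x-y)-s\bigr)\,ds=\tfrac{\kappa}{4}T_\kappa^2(t,x-y).
\end{equation*}
This completes the proof; the entire lemma is essentially a direct consequence of the indicator-function form of $G_\kappa$ and a reparametrization around the midpoint $\bar{x}$.
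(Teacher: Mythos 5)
Your proof is correct and follows essentially the same route as the paper: the paper simply observes that the product of the two cone indicators is the indicator of a smaller backward cone with apex at $\left(T_\kappa(t,x-y),\tfrac{x+y}{2}\right)$ and declares \eqref{E4:GG}--\eqref{E4:GG-2} ``clear from the figure,'' whereas you carry out the interval-intersection computation analytically. Your Steps 1--3 just make explicit what the paper leaves to the picture, so there is nothing to add.
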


\begin{proof}
Since $G_\kappa(t-s,x-y)=\frac{1}{2}\Indt{\Lambda(t,x)}\left(s,y\right)$,
\eqref{E4:GG}--\eqref{E4:GG-2} are clear from Figure \ref{F4:WaveKernel}.
\end{proof}

\begin{figure}[h!tbp]
\centering
\subfloat[the case where $|x-y|\ge 2\kappa t$]{
   \includegraphics[scale =0.8] {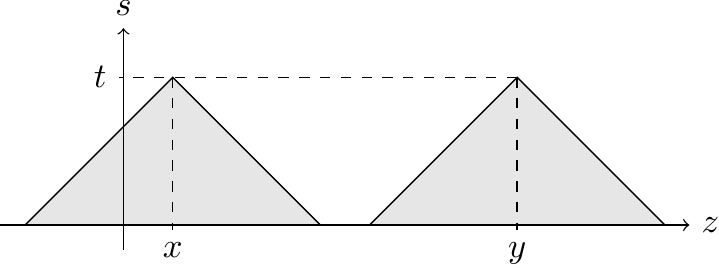}
 }
\qquad
 \subfloat[the case where $|x-y|< 2\kappa t$]{
   \includegraphics[scale =0.7] {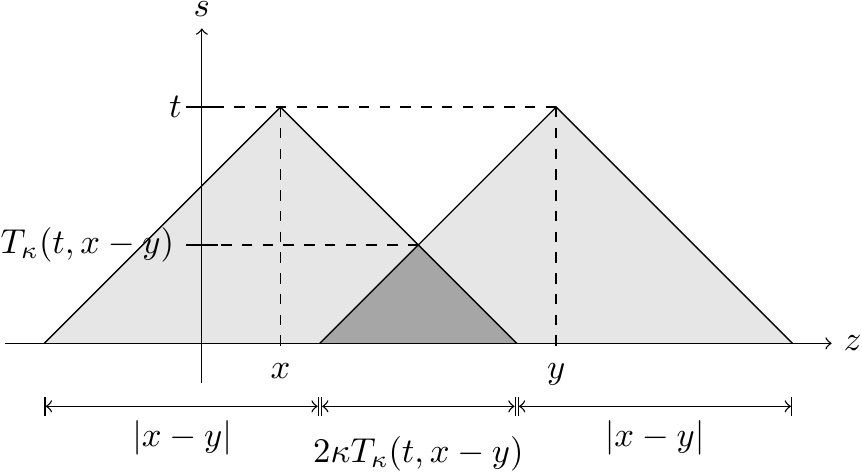}
 }
\caption{The two lightly shaded regions denote the support of the functions
$(s,z)\mapsto
G_\kappa(t-s,x-z)$ and $(s,z)\mapsto G_\kappa\left(t-s,y-z\right)$ respectively.
}
\label{F4:WaveKernel}
\end{figure}

\begin{proposition}\label{P4:G-Cont-Bd}
The wave kernel function $G_\kappa(t,x)$ satisfies Assumption
\ref{A:Cont-Bdd} with
$\tau=1/2$, $\alpha=\kappa/2$ and all $\beta\in \;]0,1[$ and $C=1$.
\end{proposition}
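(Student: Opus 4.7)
The plan is to exploit the very simple form of the wave kernel: $G_\kappa(t,x) = \tfrac{1}{2}\mathbf{1}_{\{t>0,\,|x|\le \kappa t\}}$ takes only the values $0$ and $1/2$, so with $C=1$ the inequality $G_\kappa(t'-s,x'-y)\le G_\kappa(t+1-s,x-y)$ is equivalent to the implication
\[
G_\kappa(t'-s,x'-y)>0 \;\Longrightarrow\; G_\kappa(t+1-s,x-y)>0,
\]
which is a pure support-inclusion statement. This should reduce the whole proposition to an application of the triangle inequality in the $x$-variable and a comparison of time arguments.

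First I would fix $(t,x)\in\R_+^*\times\R$ and $(t',x')\in B_{t,x,\beta,1/2,\kappa/2}$, so that by definition $\beta t\le t'\le t+1/2$ and $|x-x'|\le \kappa/2$. Next I would take $(s,y)\in[0,t'[\,\times\R$ and suppose $G_\kappa(t'-s,x'-y)>0$; this means $t'-s>0$ and $|x'-y|\le \kappa(t'-s)$. I need to show the same for the pair $(t+1-s,x-y)$. Positivity of the time argument is immediate since $t+1-s > t'-s > 0$ (we have $t' \le t+1/2<t+1$).

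Then I would apply the triangle inequality followed by the hypotheses on $(t',x')$:
\[
|x-y|\;\le\;|x-x'|+|x'-y|\;\le\;\frac{\kappa}{2}+\kappa(t'-s)\;=\;\kappa\!\left(t'-s+\tfrac{1}{2}\right)\;\le\;\kappa(t+1-s),
\]
where the last inequality uses exactly $t'\le t+1/2$, i.e.\ the choice $\tau=1/2$. This shows $(x-y)\in[-\kappa(t+1-s),\kappa(t+1-s)]$, hence $G_\kappa(t+1-s,x-y)=1/2 \ge G_\kappa(t'-s,x'-y)$, establishing the assumption with $C=1$.

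I do not expect any real obstacle: the argument is literally a triangle inequality, and there is no condition involving $\beta$ anywhere in the chain of inequalities above, which is why the statement holds for every $\beta\in\;]0,1[$. The only small things to double-check are that the degenerate boundary case $|x'-y|=\kappa(t'-s)$ is handled (the inequality $|x-y|\le \kappa(t+1-s)$ remains valid), and that I have used the correct parameters $\tau=1/2$ and $\alpha=\kappa/2$ from the choice of $B_{t,x,\beta,\tau,\alpha}$ in \eqref{E:B}.
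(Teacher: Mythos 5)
Your proof is correct and follows essentially the same idea as the paper, which simply points to a figure showing that the backward light cone of $(t',x')$ is contained in that of $(t+1,x)$ precisely when $\alpha + \kappa\tau \le \kappa$ (whence the choice $\tau=1/2$, $\alpha=\kappa/2$). Your triangle-inequality computation is just the analytic rendering of that picture, and it correctly identifies that $\beta$ plays no role.
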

\begin{proof}
See Figure \ref{F4:G-W-Cont-Bd}.
The gray box is the
set $B_{t,x,\beta,\tau,\alpha}$. Clearly, we need $\alpha/\kappa +\tau
=1$. Therefore, we can choose $\tau=1/2$ and $\alpha=\kappa/2$.
\end{proof}

\begin{figure}[h!tbp]
\center
\includegraphics[scale=0.65]{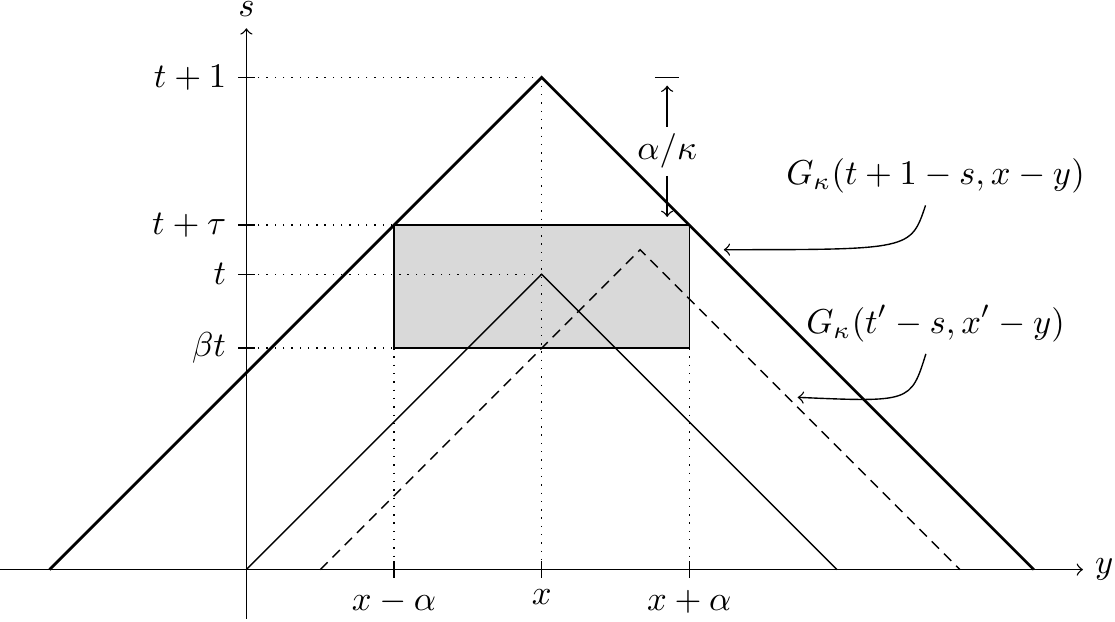}
\caption{$G_\kappa(t,x)$ verifies Assumption
\ref{A:Cont-Bdd}.}
\label{F4:G-W-Cont-Bd}
\end{figure}

For $g \in L_{loc}^{2}\left(\R\right)$ and $\mu\in\calM\left(\R\right)$,
define
\begin{gather}\label{E4:Psi-g}
\Psi_{g}(x) = \int_{-x}^x \ud y\: g^2(y),\quad \text{and}\quad
\Psi_{\mu}^*(x) = \left(|\mu|\left([-x,x]\right)\right)^2\;,\quad\text{for all
$x\ge 0$.}
\end{gather}
Clearly, these are nondecreasing functions of $x$.

\begin{lemma}\label{L:InDt}
If $g \in L_{loc}^2\left(\R\right)$ and $\mu\in \calM\left(\R\right)$, then
for all $v\in\R$ and
$(t,x)\in\R_+\times\R$,
\[\left(\left[v^2+J_0^2\right]\star
G_\kappa^2 \right) (t,x)\le
\frac{\kappa t^2}{4}\left(v^2+3\Psi_\mu^*\left(|x|+\kappa t\right)\right)  +
\frac{3}{16} \:t\:\Psi_g\left(|x|+\kappa
t\right)<+\infty.\]
Moreover, for all $v\in\R$ and all compact sets
$K\subseteq\R_+\times\R$,
\[
\sup_{(t,x)\in K}\left(\left[v^2+J_0^2\right]\star G_\kappa^2 \right) (t,x)
<+\infty.
\]
\end{lemma}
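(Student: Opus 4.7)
The plan is to reduce the claim to three elementary estimates by decomposing $J_0(s,y) = A(s,y) + B(s,y)$, where $A(s,y) := \tfrac{1}{2}(g(y+\kappa s) + g(y-\kappa s))$ carries the initial position and $B(s,y) := (\mu \ast G_\kappa(s,\cdot))(y)$ carries the initial velocity. Using the Young-type inequality $(a+b)^2 \le (1+\varepsilon)a^2 + (1+\varepsilon^{-1})b^2$ with $\varepsilon = 1/2$ gives $J_0^2 \le \tfrac{3}{2} A^2 + 3 B^2$, so the convolution $((v^2+J_0^2)\star G_\kappa^2)(t,x)$ splits into contributions from $v^2$, from $A^2$, and from $B^2$, which I would estimate separately.

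The $v^2$-contribution is an immediate Fubini computation: since $G_\kappa^2(t-s,\cdot) = \tfrac{1}{4}\mathbf{1}_{[-\kappa(t-s),\kappa(t-s)]}$, one gets $(v^2 \star G_\kappa^2)(t,x) = v^2 \kappa t^2/4$. For the $B^2$-contribution, I would use the identity $B(s,y) = \tfrac{1}{2}\mu([y-\kappa s, y+\kappa s])$ together with the geometric observation that whenever $(s,y)$ lies in the backward light cone $\Lambda(t,x)$ (i.e.\ $|y-x|\le \kappa(t-s)$, $0\le s\le t$), the interval $[y-\kappa s,\,y+\kappa s]$ is contained in $[x-\kappa t,\,x+\kappa t] \subseteq [-(|x|+\kappa t),\,|x|+\kappa t]$. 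This uniformly bounds $B^2$ on the cone by $\tfrac{1}{4}\Psi_\mu^*(|x|+\kappa t)$, and multiplying by the $v^2$-type integral $\kappa t^2/4$ yields the term $\tfrac{3\kappa t^2}{16}\Psi_\mu^*(|x|+\kappa t)$, which is dominated by $\tfrac{3\kappa t^2}{4}\Psi_\mu^*(|x|+\kappa t)$. For the $A^2$-contribution, I would first apply Cauchy--Schwarz to get $A^2(s,y) \le \tfrac{1}{2}(g^2(y+\kappa s) + g^2(y-\kappa s))$, then in each of the two resulting integrals perform the change of variables $z = y \pm \kappa s$ and swap the order of integration.

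The delicate step, which is where I expect the main difficulty, is the clean Fubini computation of the resulting $s$-weight: for fixed $z \in [x-\kappa t,\,x+\kappa t]$, the two weights produced by the $+$ and $-$ substitutions are $(z - x + \kappa t)/(2\kappa)$ and $(x - z + \kappa t)/(2\kappa)$, whose sum telescopes to exactly $t$, independent of $z$. This cancellation is what yields the sharp bound $(A^2 \star G_\kappa^2)(t,x) \le \tfrac{t}{8}\Psi_g(|x|+\kappa t)$, and after multiplication by $3/2$ produces the stated coefficient $3/16$; a naive estimate at this step gives only $\tfrac{3t}{8}\Psi_g$. Finiteness of $\Psi_g(|x|+\kappa t)$ follows from $g \in L^2_{loc}(\mathbb{R})$ and of $\Psi_\mu^*(|x|+\kappa t)$ from $\mu \in \mathcal{M}(\mathbb{R})$. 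Finally, for the supremum over a compact set $K \subseteq [0,T] \times [-R,R]$, the uniform bound $|x|+\kappa t \le R + \kappa T$ together with monotonicity of $\Psi_g$ and $\Psi_\mu^*$ reduces everything to evaluating these functions at the single point $R + \kappa T$, proving the final assertion.
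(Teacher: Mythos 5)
Your proof is correct and follows essentially the same route as the paper's: split $J_0$ into position and velocity contributions via an elementary quadratic inequality, bound the measure term uniformly on the backward light cone $\Lambda(t,x)$, and treat the $g^2$ term by a change of variables over the cone. Your exact computation of the two $s$-weights $(z-x+\kappa t)/(2\kappa)$ and $(x-z+\kappa t)/(2\kappa)$, whose sum telescopes to $t$, is the sharp form of the paper's triangle change-of-variables step and correctly reproduces the coefficients $3/16$ and $\kappa t^2(v^2+3\Psi_\mu^*)/4$.
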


Note that the conclusion of this lemma is stronger than
Assumption \ref{A:InDt} since $t$ can be zero
here.

\begin{proof}
Suppose $t>0$.  Notice that $\left|(\mu * G_\kappa(s,\cdot))(y) \right|\le |\mu|
\left([y-\kappa s,y
+\kappa s]\right)$,
and so, recalling \eqref{E4:J0-Wave},
\begin{align*}
\left(\left[v^2+J_0^2\right]\star G_\kappa^2 \right)(t,x)
&= \frac{1}{4} \left(v^2\:
\iint_{\Lambda(t,x)}\ud s\ud y+\iint_{\Lambda(t,x)}\ud s\ud y\;
J_0^2\left(s,y\right)\right)\\
&\le \frac{1}{4}
\Bigg(v^2 \kappa t^2 + \frac{3}{4}
\int_0^t \ud s\int_{x-\kappa (t-s)}^{x+\kappa (t-s)}\ud y\;
\big(
g^2(y+\kappa s)+g^2(y-\kappa s) \\
& \hspace{40pt}+
4|\mu|^2\left([y-\kappa s, y+\kappa s]\right)
\big)\Bigg)\:.
\end{align*}
Clearly, for all $\left(s,y\right)\in \Lambda(t,x)$, by \eqref{E4:Psi-g},
\[
|\mu|^2\left([y-\kappa s, y+\kappa s]\right)
\le
|\mu|^2\left([x-\kappa t, x+\kappa t]\right)
\le \Psi_\mu^*\left(|x|+\kappa t\right)\:.
\]
The integral for $g^2$ can be easily evaluated by the change of variables in
Figure \ref{F4:ChangeVar}:
\begin{align*}
\int_0^t\ud s \int_{x-\kappa (t-s)}^{x+\kappa (t-s)} \left(
g^2(y+\kappa s)+g^2(y-\kappa s)
\right)\ud y
&= \frac{1}{2\kappa}\iint_{I \cup II \cup III}
\left(g^2(u)+g^2(w)\right) \ud u \ud w\\
&\le
\frac{1}{2\kappa}\int_{x-\kappa t}^{x+\kappa t}\ud w
\int_{-x-\kappa t}^{-x+\kappa t} \ud u
\left(g^2(u)+g^2(w)\right)\\
&\le t\: \Psi_g(|x|+\kappa t)\:,
\end{align*}
where $I$, $II$ and $III$ denote the three regions in Figure \ref{F4:ChangeVar}
and $\Psi_g$ is defined in \eqref{E4:Psi-g}.
Therefore,
\[
\left(\left[v^2+J_0^2\right]\star G_\kappa^2 \right)(t,x)\le
\frac{1}{4}\left(\left(v^2+3\Psi_\mu^*\left(|x|+\kappa
t\right)\right) \kappa t^2 + \frac{3}{4} \;t\;\Psi_g\left(|x|+\kappa
t\right)\right)<+\infty\:.
\]
Finally, let $a=\sup\big\{|x|+\kappa t: (t,x)\in
K\big\}$, which is finite because $K$ is a compact set. Then,
\[
\sup_{(t,x)\in K}\left(\left[v^2+J_0^2\right]\star G_\kappa^2 \right) (t,x)
\le
\frac{\kappa a^2}{4}\left(v^2+3\Psi_\mu^*\left(a\right)\right)  +
\frac{3}{16} \: a\: \Psi_g\left(a\right)
<+\infty\:,
\]
which completes the proof of Lemma \ref{L:InDt}.
\end{proof}

\begin{proof}[Proof of Theorem \ref{T4:ExUni}]
To apply Theorem \ref{T3:ExUni}, we need to verify the assumptions
\conRef{G} and \conRef{W} of Theorem \ref{T3:ExUni} with
$\theta(t,x)\equiv 1$.
We begin with \conRef{G}:
(a) is satisfied by \[\Theta_\kappa(t,x,x) =
\iint_{[0,t]\times\R} \ud s \ud y\: G_\kappa^2\left(t-s,x-y\right)=
\frac{\kappa t^2}{2}<+\infty\] and Proposition \ref{P4:K-W};
(b) is verified by Lemma \ref{L:InDt}.
\conRef{W} is true due to Proposition \ref{P4:G-Cont-Bd}.
As for the two-point correlation function, \eqref{E:TP} reduces to
\eqref{E4:TP} because, by \eqref{E4:GG},
\[\int_0^t \ud s\int_\R \ud z\: f(s,z)
G_\kappa(t-s,x-z)G_\kappa\left(t-s,y-z\right)
=
\frac{1}{2}\left(f\star
G_\kappa\right) \left(T_\kappa\left(t,x-y\right),\frac{x+y}{2}\right).
\]
This completes the proof of Theorem \ref{T4:ExUni}.
\end{proof}

\subsection{Weak intermittency}
\label{SS:Wave-WInterm}
Recall that $u(t,x)$ is said to be fully intermittent if
the Lyapunov exponent of order $1$ vanishes
and the lower Lyapunov exponent of order $2$ is strictly positive:
$m_1=0$ and $\underline{m}_2>0$.
The solution is called {\it weakly intermittent}
if  $\underline{m}_2>0$.

\begin{theorem}\label{T4:Intermit}
Suppose that $|\rho(u)|^2\le \Lip_\rho^2(\Vip^2+u^2)$, $g(x)\equiv w$ and
$\mu(\ud x) = \widetilde{w}\ud x$ with $w,
\widetilde{w}\in\R$.
Then we have the following two properties
\begin{enumerate}[(1)]
 \item For all even integers $p\ge 2$,
\begin{align}\label{E:mbds}
\overline{m}_p \le
\begin{cases}
\Lip_\rho \sqrt{2\kappa}\:  p^{3/2} & \text{if $\Vip\ne 0$ and $p>2$},\cr
\Lip_\rho  \sqrt{\kappa}\: p^{3/2} & \text{if $\Vip= 0$ and $p>2$},\cr
\Lip_\rho\sqrt{\kappa/2} & \text{if $p=2$}.
\end{cases}
\end{align}
\item If $|\rho(u)|^2\ge \lip_\rho^2(\vip^2+u^2)$ for some
$\lip_\rho \ne 0$, and if
$|\vip|+
|w|+|\widetilde{w}|\ne 0$ with $w \widetilde{w}\ge 0$, then
$\underline{m}_2\ge |\lip_\rho| \sqrt{\kappa/2}$
and so $u(t,x)$ is weakly intermittent.
\item If $|\rho(u)|^2=\lambda^2(\vv^2+u^2)$, with $\lambda\ne 0$, and if
$|\vip|+|w|+|\widetilde{w}|\ne 0$, then
$\underline{m}_2=\overline{m}_2=|\lambda|\sqrt{\kappa/2}$.
\end{enumerate}
\end{theorem}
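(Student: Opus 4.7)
The plan is to extract all three statements from the moment formulas of Theorem \ref{T4:ExUni} and the explicit constant-initial-data computations of Corollary \ref{C4:HomeInit}, using the elementary bound $I_0(z)\le e^{|z|}$ from \eqref{E4:I0Exp} and the shape of $\calH(t)=\cosh(|\lambda|\sqrt{\kappa/2}\,t)-1$ from \eqref{E4:Sinh}.

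For part (1), I would start from the $p\ge 2$ bound in \eqref{E4:SecMom-Up}. Since $g\equiv w$ and $\mu(\ud x)=\tilde w\,\ud x$ give $J_0(t,x)=w+\kappa\tilde w\,t$, which grows only polynomially in $t$, the exponential growth of $\Norm{u(t,x)}_p^2$ is controlled entirely by $\widehat{\calK}_p$ and $\widehat{\calH}_p$. Using \eqref{E4:I0Exp} I would bound
\[
\widehat{\calK}_p(t,x)\ \le\ \tfrac{1}{4}(a_{p,\Vip}z_p\Lip_\rho)^{2}\,\exp\!\bigl(a_{p,\Vip}z_p\Lip_\rho\sqrt{\kappa/2}\;t\bigr),
\]
and similarly $\widehat{\calH}_p(t)\le \exp(a_{p,\Vip}z_p\Lip_\rho\sqrt{\kappa/2}\,t)$. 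Convolving $J_0^2$ against $\widehat{\calK}_p$ and bounding the polynomial-in-$t$ prefactors trivially yields $\Norm{u(t,x)}_p^2\le C(t)\,e^{\alpha_p t}$ with $\alpha_p=a_{p,\Vip}z_p\Lip_\rho\sqrt{\kappa/2}$ and $C(t)$ polynomial. Taking logarithms gives
\[
\overline m_p\ =\ \limsup_{t\to\infty}\tfrac{p}{2}\cdot\tfrac{\log\Norm{u(t,x)}_p^2}{t}\ \le\ \tfrac{p}{2}\,a_{p,\Vip}\,z_p\,\Lip_\rho\,\sqrt{\kappa/2}.
\]
Inserting the values of $a_{p,\Vip}$ from \eqref{E:a_pv} and the universal estimate $z_p\le 2\sqrt{p}$ (with $z_2=1$), a direct case distinction on $(p,\Vip)$ reproduces exactly the three cases of \eqref{E:mbds}.

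For part (2), I would invoke the lower second-moment bound \eqref{E4:SecMom-Lower} together with $\vip^2\,\underline{\calH}(t)$. Since the lower bound has exactly the same structure as the equality \eqref{E4:SecMom} in the quasi-linear case with $(\lambda,\vv)$ replaced by $(\lip_\rho,\vip)$, the computation of Corollary \ref{C4:HomeInit}(1) applies verbatim to produce
\[
\Norm{u(t,x)}_2^2\ \ge\ w^2+\Bigl(w^2+\vip^2+\tfrac{4\kappa\tilde w^2}{\lip_\rho^2}\Bigr)\underline{\calH}(t)+\tfrac{2\sqrt{2\kappa}\,w\tilde w}{|\lip_\rho|}\sinh\!\bigl(|\lip_\rho|\sqrt{\kappa/2}\,t\bigr).
\]
Under the hypothesis $w\tilde w\ge 0$ and $|\vip|+|w|+|\tilde w|\ne 0$, every non-constant term on the right is non-negative, and at least one of the coefficients of $\underline{\calH}(t)$ or $\sinh(\cdot)$ is strictly positive; hence $\Norm{u(t,x)}_2^2\ge c\,e^{|\lip_\rho|\sqrt{\kappa/2}\,t}$ for some $c>0$ and all large $t$, giving $\underline m_2\ge|\lip_\rho|\sqrt{\kappa/2}>0$, which is weak intermittency.

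For part (3), the assumption $|\rho(u)|^2=\lambda^2(\vv^2+u^2)$ activates Corollary \ref{C4:HomeInit}(1) with equality, so $\Norm{u(t,x)}_2^2$ has an explicit closed form whose dominant term behaves like $c\,e^{|\lambda|\sqrt{\kappa/2}\,t}$ (with $c>0$ under the hypothesis $|\vv|+|w|+|\tilde w|\ne 0$, arguing as in part (2) but noting that no sign condition on $w\tilde w$ is needed since the coefficient of $\cosh$ dominates the $\sinh$ term at infinity). The $\limsup$ and $\liminf$ therefore coincide and equal $|\lambda|\sqrt{\kappa/2}$. The one subtlety I expect is verifying positivity of the leading coefficient when $w\tilde w<0$ in part (3): this requires checking that $w^2+\vv^2+4\kappa\tilde w^2/\lambda^2$ strictly dominates $2\sqrt{2\kappa}|w\tilde w|/|\lambda|$ asymptotically, which follows from AM-GM on the $w^2$ and $4\kappa\tilde w^2/\lambda^2$ pieces. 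This is the only place where a small arithmetic check is needed; all other steps are direct substitutions into the results already established.
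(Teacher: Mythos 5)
Your proposal is correct and follows essentially the same route as the paper: part (1) via the $p$-th moment upper bound of Theorem \ref{T4:ExUni} combined with $\widehat{\calH}_p(t)=\cosh\left(a_{p,\Vip}z_p\Lip_\rho\sqrt{\kappa/2}\,t\right)-1$ and the constants $a_{p,\Vip}$ and $z_p\le 2\sqrt{p}$, and parts (2)--(3) via the explicit constant-initial-data formulas of Corollary \ref{C4:HomeInit}. The only divergence is that you handle the case $w\widetilde{w}<0$ in part (3) explicitly, via the AM-GM comparison showing $w^2+4\kappa\widetilde{w}^2/\lambda^2\ge 4\sqrt{\kappa}\,|w\widetilde{w}|/|\lambda|>2\sqrt{2\kappa}\,|w\widetilde{w}|/|\lambda|$, whereas the paper deduces (3) directly from (1) and (2) --- the latter carrying the hypothesis $w\widetilde{w}\ge 0$ --- so your extra check is a correct and worthwhile completion rather than a redundancy.
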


\begin{proof}
Clearly, $J_0(t,x) = w + \kappa \widetilde{w} t$.
(1) If $|\Vip|+ |w|+|\widetilde{w}|= 0$, then $J_0(t,x)\equiv 0$ and
$\rho(0)=0$, so
$u(t,x)\equiv 0$ and the bound
is trivially true. If $|\Vip|+ |w|+|\widetilde{w}|\ne 0$, then by
\eqref{E4:SecMom-Up}, for all even integers $p\ge 2$,
\[\Norm{u(t,x)}_p^2\le 2 \left(w + \kappa \widetilde{w} t\right)^2 +
\left[\Vip^2
+2 \left(w +
\kappa \widetilde{w} t\right)^2\right]
\widehat{\calH}_p(t).
\]
Hence, by \eqref{E4:Sinh}, $\overline{m}_p \le a_{p,\Vip} z_p \Lip_\rho
\sqrt{\kappa/2}
\:p/ 2$. Then by \eqref{E:a_pv} and the fact that $z_2=1$ and $z_p\le 2\sqrt{p}$
for $p\ge 2$, we obtain \eqref{E:mbds}.

(2) Note that the term $ \Vip^2+2 \left(w+ \kappa \widetilde{w}t\right)^2$ on
the r.h.s. of the above inequality does not vanish since $|\Vip| +|w|
+|\widetilde{w}|\ne 0$. By \eqref{E4:SecMom-Lower} and Corollary
\ref{C4:HomeInit},
\[\Norm{u(t,x)}_2^2 \ge
-\vip^2 -\frac{4\kappa \widetilde{w}^2}{\lip_\rho^2} +
\left(w^2 + \vip^2 + \frac{4\kappa
\widetilde{w}^2}{\lip_\rho^2}\right)\cosh\left(|\lip_\rho|\sqrt{\kappa
/2}\: t\right).\]
Clearly, $|\vip| +|w|
+|\widetilde{w}|\ne 0$ implies that $\underline{m}_2 \ge  |\lip_\rho|
\sqrt{\kappa/2}$.

Part (3) is a consequence of (1) and (2). This completes the proof of
Theorem
\ref{T4:Intermit}.
\end{proof}

\begin{remark}
It would be interesting to obtain a lower bound of the form $\underline{m}_p\ge
C p^{3/2}$.
Dalang and Mueller \cite{DalangMueller09Intermittency} derived
the lower bound for the stochastic wave and heat equations in $\R_+\times\R^3$
in the case where $\rho(u) = \lambda u$ and the driving noise is spatially
colored. An essential tool in their paper is a Feynman-Kac-type formula that
they obtained (with Tribe) in \cite{DalangMT06FKT}.
\end{remark}

\subsection{Exponential growth indices}
\label{SS:Wave-Exp}
Recall the definition of $\underline{\lambda}_p(x)$ and
$\overline{\lambda}_p(x)$ in \eqref{E4:GrowInd-0} and \eqref{E4:GrowInd-1}.
Define
\begin{align}\label{E2:MGa}
\calM_G^{\sd}\left(\R\right):= \left\{\mu\in\calM\left(\R\right): \: \int_\R
e^{\sd|x|}|\mu|
(\ud x)<+\infty\right\}\;, \quad \sd\ge 0.
\end{align}
We use subscript ``$+$'' to denote the subset of non-negative measures.
For example, $\calM_+\left(\R\right)$ is the set of non-negative Borel measures
over $\R$
and
$\calM_{G,+}^{\sd}\left(\R\right) = \calM_G^{\sd}\left(\R\right) \cap
\calM_+\left(\R\right)$.

\begin{remark}\label{R:WaveGI}
Since the kernel function $\calK(t,x)$ has support in the same space-time cone
as the fundamental solution $G_\kappa(t,x)$, it is clear that if the initial
data have compact support, then the solution, including any high peaks related
to intermittency, must propagate in the space-time cone with the same speed
$\kappa$. Hence
$\underline{\lambda}(p)\le \overline{\lambda}(p) \le \kappa$. Conus and
Khoshnevisan showed in \cite[Theorem 5.1]{ConusKhosh10Farthest} that with some
other mild conditions on the compactly supported initial data,
$\underline{\lambda}(p)= \overline{\lambda}(p) = \kappa$ for all $p\ge 2$.
\end{remark}

\begin{theorem}\label{Tw:Growth}
We have the following:
\begin{enumerate}[(1)]
 \item Suppose that $|\rho(u)|\le \Lip_\rho |u|$ with $\Lip_\rho\ne 0$ and the
initial data satisfy the following two conditions:
\begin{enumerate}[(a)]
 \item The initial position $g(x)$ is a Borel function such
that $|g(x)|$ is bounded from above by some function $c e^{-\sd_1 |x|}$ with
$c>0$ and $\sd_1>0$ for almost all $x\in\R$;
 \item The initial velocity $\mu\in \calM_G^{\sd_2}\left(\R\right)$ for some
$\sd_2>0$.
\end{enumerate}
Then for all even integers
$p\ge 2$,
\[\overline{\lambda}(p)\le
\begin{cases}
\displaystyle
\kappa \left(1+\frac{a_{p,\Vip}^2 \: z_p^2 \: \Lip_\rho^2
}{8\kappa\: (\sd_1\wedge \sd_2)^2}\right)^{1/2}
&
\text{if $p>2$},\\[1em]
\displaystyle
\kappa \left(1+\frac{\Lip_\rho^2
}{8\kappa\: (\sd_1\wedge \sd_2)^2}\right)^{1/2}
&
\text{if $p=2$}.
\end{cases}\]
\item Suppose that $|\rho(u)|\ge \lip_\rho |u|$ with $\lip_\rho\ne 0$ and the
initial data satisfy one of the following two conditions:
\begin{enumerate}[(a')]
 \item The initial position $g(x)$ is a non-negative Borel function
bounded from below by some function $c_1 e^{-\sd_1' |x|}$ with $c_1>0$ and
$\sd_1' >0$ for almost all $x\in\R$;
 \item The initial velocity $\mu(\ud x)$ has a density $\mu(x)$ that is a
non-negative Borel function bounded from below by some function $c_2
e^{-\sd_2' |x|}$ with $c_2>0$ and $\sd_2'>0$ for almost all $x\in\R$.
\end{enumerate}
Then
\[\underline{\lambda}(p) \ge
\kappa \left(1+\frac{\lip_\rho^2}{8\kappa \left(\sd_1'\wedge
\sd_2'\right)^2}\right)^{1/2},\quad\text{for all even integers $p\ge 2$.}
\]
\end{enumerate}
In particular, we have the following two special cases:
\begin{enumerate}
 \item[(3)] For the hyperbolic Anderson model $\rho(u) = \lambda u$ with
$\lambda\ne 0$, if the initial velocity
$\mu$ satisfies all Conditions (a), (b), (a') and (b') with
$\sd:=\sd_1\wedge \sd_2 = \sd_1'\wedge \sd_2'$,
then
\[\underline{\lambda}(2)   =
\overline{\lambda}(2) =\kappa\left(1+\frac{\lambda^2}{8\kappa
\sd^2}\right)^{1/2}.\]
\item[(4)] If $\lip_\rho |u|\le |\rho(u)|\le \Lip_\rho |u|$ with $\lip_\rho\ne
0$ and $\Lip_\rho \ne 0$, and both $g(x)$ and $\mu(x)$ are non-negative Borel
functions with compact support, then
\[
\overline{\lambda}(p) = \underline{\lambda}(p) =\kappa,\quad
\text{for all even integers $p\ge 2$.}
\]
\end{enumerate}
\end{theorem}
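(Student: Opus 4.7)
The plan is to combine the second-moment bounds from Theorem \ref{T4:ExUni} with the explicit Bessel-function form \eqref{E4:BesselI} of the kernel $\calK$, the sharp upper bound $I_0(z)\le e^z$ from \eqref{E4:I0Exp}, and the asymptotic lower estimate $I_0(z)\ge c_0 e^z/\sqrt{1+z}$ for $z\ge 0$. For part (1), I would first derive from \eqref{E4:J0-Wave} the upper envelope
\[
  |J_0(s,z)| \le C\, e^{-\sd (|z|-\kappa s)_+}, \qquad \sd := \sd_1 \wedge \sd_2,
\]
using the identity $\min(|z+\kappa s|,|z-\kappa s|) = \bigl||z|-\kappa s\bigr|$ for the $g$-part of $J_0$ and $|\mu|([z-\kappa s,z+\kappa s]) \le e^{-\sd_2(|z|-\kappa s)_+}\int e^{\sd_2|y|}\,|\mu|(\ud y)$ for the $\mu$-part. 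Since $|\rho(u)|\le \Lip_\rho|u|$ allows one to take $\Vip=0$ in Theorem \ref{T4:ExUni}, with $c_p := a_{p,\Vip}\, z_p\, \Lip_\rho$ we get
\[
  \Norm{u(t,x)}_p^2 \le b_p\, J_0^2(t,x) + \bigl(b_p\, J_0^2 \star \widehat{\calK}_p\bigr)(t,x),
\]
and \eqref{E4:BesselI}, \eqref{E4:I0Exp} yield
\[
  \widehat{\calK}_p(t-s,x-z) \le \tfrac{c_p^2}{4}\exp\!\Bigl(\tfrac{c_p}{\sqrt{2\kappa}}\sqrt{(\kappa(t-s))^2-(x-z)^2}\Bigr)\ind{|x-z|\le\kappa(t-s)}.
\]

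Substituting these estimates and using $|z|\ge |x|-|x-z|$ on $\{|x|\ge\alpha t\}$ with $\alpha>\kappa$ (which forces $(|z|-\kappa s)_+ = |z|-\kappa s$ throughout the support), the task reduces to maximizing
\[
  \Phi(s,z) := -2\sd(|z|-\kappa s) + \tfrac{c_p}{\sqrt{2\kappa}}\sqrt{(\kappa(t-s))^2-(x-z)^2}
\]
over the integration domain. A short Lagrange-multiplier calculation in $z$ followed by monotonicity in $s$ (the coefficient of $s$ in $\Phi$ after optimizing over $z$ is $-\kappa(\sqrt{c_p^2/(2\kappa)+4\sd^2}-2\sd)<0$) shows that $\Phi$ is maximized at $s^\ast=0$ with $|x-z^\ast|=2\sd\kappa t/\sqrt{4\sd^2+c_p^2/(2\kappa)}$ and value
\[
  \Phi^\ast = 2\sd\, t\,\bigl(\kappa\sqrt{1+c_p^2/(8\kappa\sd^2)} - \alpha\bigr).
\]
Once $\alpha > \kappa\sqrt{1+a_{p,\Vip}^2 z_p^2 \Lip_\rho^2/(8\kappa\sd^2)}$, this is $-\eta t$ for some $\eta>0$; the integration region has area $O(t^2)$, so the whole bound decays like $\mathrm{poly}(t)\, e^{-\eta t}$, giving the upper bound on $\overline{\lambda}(p)$.

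For part (2), I would start from the lower bound $\Norm{u(t,x)}_2^2 \ge (J_0^2 \star \underline{\calK})(t,x)$ in Theorem \ref{T4:ExUni} (discarding the other non-negative terms) and, under (a') or (b'), derive the matching lower envelope $J_0^2(s,z) \ge C'\, e^{-2\sd'(|z|+\kappa s)}$ with $\sd' := \sd_1'\wedge \sd_2'$. Applying the Bessel lower bound to $\underline{\calK}$, the integrand of the convolution is bounded below by $C\, t^{-k}\, e^{\tilde\Phi(s,z)}$, where $\tilde\Phi$ coincides with $\Phi$ at $s=0$ but has $\sd,c_p$ replaced by $\sd',\lip_\rho$ and an additional negative term $-2\sd'\kappa s$ that only reinforces the max at $s=0$. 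For $\kappa<\alpha<\kappa\sqrt{1+\lip_\rho^2/(8\kappa(\sd')^2)}$, the maximizer $(s^\ast,z^\ast)=(0,x-2\sd'\kappa t/\sqrt{4(\sd')^2+\lip_\rho^2/(2\kappa)})$ lies strictly inside the integration cone $[0,t]\times[x-\kappa(t-s),x+\kappa(t-s)]$; integrating over a unit-size neighborhood of it yields
\[
  \bigl(J_0^2\star \underline{\calK}\bigr)(t,\alpha t) \ge C\, t^{-k}\, \exp\!\Bigl(2\sd'\bigl(\kappa\sqrt{1+\lip_\rho^2/(8\kappa(\sd')^2)}-\alpha\bigr)t\Bigr),
\]
which grows exponentially. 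The passage from $p=2$ to any even $p\ge 2$ is automatic from Jensen: $\E[|u(t,x)|^p] \ge (\E[u(t,x)^2])^{p/2}$.

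Parts (3) and (4) are then corollaries. For (3), the hyperbolic Anderson constants $\Lip_\rho=\lip_\rho=|\lambda|$ together with $a_{2,\Vip}=z_2=1$ make the upper bound in (1) for $p=2$ match the lower bound in (2). For (4), $\overline{\lambda}(p)\le\kappa$ is the finite-propagation observation of Remark \ref{R:WaveGI}, and the matching lower bound $\underline{\lambda}(p)\ge\kappa$ is \cite[Theorem 5.1]{ConusKhosh10Farthest}. The main technical obstacle will be in the lower-bound argument of part (2): one must carefully verify that the formal maximizer $(s^\ast,z^\ast)$ indeed lies in the physical integration domain for all $\alpha\in(\kappa,\kappa\sqrt{1+\lip_\rho^2/(8\kappa(\sd')^2)})$, and control the polynomial losses arising both from the $1/\sqrt{1+z}$ prefactor in the Bessel lower bound and from the one-sided neighborhood at the boundary $\{s=0\}$, so that these polynomial factors do not mask the exponential gain encoded in $\tilde\Phi^\ast$.
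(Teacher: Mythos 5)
Your proposal is correct and lands on the same constants through the same underlying mechanism as the paper: the moment bounds of Theorem \ref{T4:ExUni}, exponential envelopes on $J_0$, the bound \eqref{E4:I0Exp} (resp.\ the asymptotics \eqref{E4:AsyIn}) for $I_0$, and an optimization of the resulting exponent; indeed your critical ray $|x-z^*|/(\kappa t)=2\sd/\sqrt{4\sd^2+c_p^2/(2\kappa)}$ is exactly the paper's optimal parameter $a=(1+\lambda^2/(8\kappa\sd^2))^{-1/2}$ in Proposition \ref{P4:InitPos}. The execution differs in three respects. First, you merge the position and velocity contributions into a single envelope $e^{-\sd(|z|-\kappa s)_+}$ with $\sd=\sd_1\wedge\sd_2$, whereas the paper decomposes $J_0=J_{0,1}+J_{0,2}$ and treats the two pieces in separate statements (Propositions \ref{P4:InitPos} and \ref{P4:InitVel}, fed by Lemmas \ref{L:InitPos-Bd} and \ref{L:InitVel-Bd}); either way one needs $J_0^2\le 2J_{0,1}^2+2J_{0,2}^2$ for the upper bound and $J_0^2\ge J_{0,1}^2+J_{0,2}^2$ (hence nonnegativity of both $g$ and $\mu$) for the lower bound. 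Second, for the lower bound the paper avoids any uniform lower bound on $I_0$ and any boundary analysis: it restricts the convolution to a two-parameter family of subregions $\{s\in[bt,t],\ |y|\le a\kappa s\}$, uses only monotonicity of $I_0$, evaluates the remaining integral in closed form (Lemma \ref{L:coshsinh}), and then optimizes over $a$ and lets $b\to 1$. Your saddle-point/unit-neighborhood argument is more transparent about where the mass concentrates, but you must justify the uniform bound $I_0(z)\ge c_0 e^z/\sqrt{1+z}$, control the one-sided neighborhood at $s=0$, and absorb the extra factor $\kappa s$ (vanishing at $s=0$) that the velocity envelope carries; you correctly flag these, and they are all routine since only polynomial losses arise. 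You also treat only $\alpha>\kappa$, which suffices because $\alpha\mapsto\sup_{|x|\ge\alpha t}$ is monotone, so the set of $\alpha$ with positive limsup is downward closed. Third, for part (4) the paper obtains $\underline{\lambda}(p)\ge\kappa$ by letting $\sd_i\to\infty$ in parts (1) and (2) rather than citing \cite{ConusKhosh10Farthest}; your citation route is what Remark \ref{R:WaveGI} itself does, but it inherits the ``mild conditions'' caveat of that reference, so you should either check those conditions or run the limiting argument. None of these differences constitutes a gap.
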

\begin{proof}
The statements of (1) and (2) are a consequence of Propositions \ref{P4:InitPos}
and \ref{P4:InitVel} below.
More precisely, let $J_{0,1}(t,x)$ (resp. $J_{0,2}(t,x)$) be the homogeneous
solutions obtained with the initial data $g$ and $0$ (resp. $0$ and
$\mu$). Clearly, $J_0(t,x) = J_{0,1}(t,x)+ J_{0,2}(t,x)$.
For the upper bounds, we use the fact that $J_0^2(t,x) \le 2 J_{0,1}^2(t,x)+
2J_{0,2}^2(t,x)$.
By \eqref{E4:SecMom-Up}, we simply choose the larger of the upper bounds between
Proposition \ref{P4:InitPos} (1) and Proposition \ref{P4:InitVel} (1).
As for the lower bounds, because
both $g$ and $\mu$ are nonnegative, $J_0^2(t,x) \ge J_{0,1}^2(t,x)+
J_{0,2}^2(t,x)$.
Hence, by \eqref{E4:SecMom-Lower},
we only need to take the larger of the lower bounds between Proposition
\ref{P4:InitPos} (2) and Proposition \ref{P4:InitVel} (2).
Part (3) is a direct consequence of (1) and (2).
When the initial data have compact support, both (1) and (2) hold for all
$\sd_i>0$ with $i=1,2$. Then
letting these $\sd_i$'s tend to $+\infty$ proves (4).
\end{proof}

Note that for Conclusion (3), clearly, $\sd_i'\ge \sd_i$, $i=1,2$. Hence,
the condition $\sd_1\wedge\sd_2=\sd_1'\wedge\sd_2'$ has only two possible
cases: $\sd_1'=\sd_1 \le \sd_2\le \sd_2'$
 and $\sd_2'=\sd_2 \le \sd_1\le\sd_1'$.

\begin{remark}
The behaviour of growth indices of the solution to the
stochastic wave equation \eqref{E4:WaveInt} depends on the growth rate of the
nonlinearity of $\rho$, and also on the rate of decay at $\pm\infty$ of the
initial data. In particular, the initial data significantly affects the
behavior of the solution for all time. However, when the initial data are
compactly supported,
the growth rate of the non-linearity  $\rho$ plays no role.
\end{remark}

\subsection{Two propositions for the exponential growth indices}
\label{SS:Wave-GrowInd}
The following asymptotic
formula for $I_0(x)$
(see, \cite[(10.30.4)]{NIST2010}) will be useful
\begin{align}\label{E4:AsyIn}
I_0(x) \sim \frac{e^x}{\sqrt{2\pi x}},\quad
\text{as $x\rightarrow \infty$.}
\end{align}

\subsubsection{Contributions of the initial position}
First consider the case where $\mu \equiv 0$.
Recall that $H(t)$ is the Heaviside function.

\begin{lemma}\label{L:InitPos-Bd}
Let $f(t,x)=\frac{1}{2}\left(e^{-\sd|x-\kappa t|}+e^{-\sd|x+\kappa
t|}\right) H(t)$. Then we have the following bounds:
\begin{enumerate}[(1)]
 \item Set $\sigma:= \sqrt{  \sd ^2
+\frac{\lambda^2}{2\kappa} }$.
For $\sd>0$, $t\ge 0$ and $|x|\ge \kappa t$,
\[
\left(f\star \calK \right)(t,x) \le
 \frac{\lambda^2 \: t}{2(\sigma-\sd)} e^{-\sd|x|+\kappa\sigma t} \:.
\]
\item For $(t,x)\in \R_+^*\times\R$, $\sd>0$ and $a,b \in \;]0,1[$,
\[
\left(f\star \calK\right)(t,x)\ge
\begin{cases}
  \frac{1}{2}e^{-\sd \kappa t}\cosh(\sd |x|)
\left(I_0\left(\sqrt{\frac{\lambda^2(\kappa^2 t^2-x^2 )}{2\kappa}}\right)-1
\right)& \text{if $|x|\le \kappa t$,}\cr
\frac{\lambda^2 e^{-\sd|x|}}{2(1-a^2)\sd^2 \kappa}
I_0\left(\sqrt{\frac{\lambda^2(1-a^2 )}{2\kappa}}\: b \:\kappa t\right)
g(t\;;a,b,\sd,\kappa) & \text{if $|x|\ge \kappa t$\:,}
\end{cases}
\]
where the function $g\left(t\;;a,b,\sd,\kappa\right)$ is equal to
\[a\cosh\left(a b \sd \kappa t \right)\cosh\left((1-b)\sd \kappa t\right)
 -a\cosh\left(a \sd \kappa t\right)
 + \sinh\left((1-b)\sd \kappa t\right) \sinh\left(a b \sd
\kappa t\right).\]
\end{enumerate}
\end{lemma}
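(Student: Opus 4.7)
My plan is to treat the three claims of the lemma separately, using throughout the explicit piecewise formula for $f$ obtained by case analysis on the signs of $y\pm\kappa s$:
$f(s,y) = e^{-\sd\kappa s}\cosh(\sd y)$ when $|y|\le\kappa s$, and $f(s,y) = e^{-\sd|y|}\cosh(\sd\kappa s)$ when $|y|\ge\kappa s$. I also abbreviate $c=\lambda^2/(2\kappa)$, so that $\sigma^2 = c+\sd^2$ and $\calK(s,y) = (\lambda^2/4)\,I_0(\sqrt{c((\kappa s)^2-y^2)})$ on the cone $|y|\le\kappa s$.

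For the upper bound (1), the starting point will be the pointwise estimate $\calK(s,y)\le (\lambda^2/4)\,e^{\kappa\sigma s-\sd|y|}$ on the cone, which follows by combining $I_0(z)\le e^z$ from \eqref{E4:I0Exp} with the Cauchy-Schwarz inequality
\[
\sqrt{c}\,\sqrt{(\kappa s)^2-y^2} + \sd|y|\le \sqrt{c+\sd^2}\,\sqrt{(\kappa s)^2-y^2+y^2} = \sigma\kappa s.
\]
Since $|x|\ge\kappa t$ forces $|y|\ge\kappa s$ on the support of $(s,y)\mapsto\calK(t-s,x-y)$, the second branch of $f$ applies; translating $z=y-x$ and using that $\calK(s',\cdot)$ is even rewrites $(f\star\calK)(t,x)$ as $2e^{-\sd|x|}\int_0^t\cosh(\sd\kappa s)\int_0^{\kappa(t-s)}\cosh(\sd z)\,\calK(t-s,z)\,dz\,ds$. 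Substituting the exponential bound on $\calK$, using $\cosh(\sd z)e^{-\sd z}\le 1$ and $\cosh(\sd\kappa s)\le e^{\sd\kappa s}$, and bounding $\kappa(t-s)\le\kappa t$ before computing $\int_0^t e^{-(\sigma-\sd)\kappa s}\,ds\le 1/((\sigma-\sd)\kappa)$, will give the factor $\lambda^2 t/(2(\sigma-\sd))$ as claimed.

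For the interior lower bound (2), case $|x|\le\kappa t$, the plan is to pass to light-cone coordinates. The substitution $(u,v)=(\kappa s-y,\kappa s+y)$ (Jacobian $(2\kappa)^{-1}$) converts $f$ to $\tfrac12(e^{-\sd|u|}+e^{-\sd|v|})$, and the affine change $u'=\kappa t-x-u$, $v'=\kappa t+x-v$ sends the support of $\calK$ onto the triangle $D=\{u',v'\ge 0,\ u'+v'\le 2\kappa t\}$, with $\calK(t-s,x-y) = (\lambda^2/4)I_0(\sqrt{cu'v'})$. Restricting the integral to the rectangle $R=[0,\kappa t-x]\times[0,\kappa t+x]\subseteq D$ (WLOG $x\ge 0$; the diagonal sum is exactly $2\kappa t$), on $R$ the exponentials simplify to $e^{-\sd(\kappa t-x)}e^{\sd u'}$ and $e^{-\sd(\kappa t+x)}e^{\sd v'}$; discarding the factors $e^{\sd u'},e^{\sd v'}\ge 1$ produces the prefactor $2e^{-\sd\kappa t}\cosh(\sd|x|)$. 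The surviving Bessel integral will be computed by the identity
\[
\int_0^A\!\int_0^B I_0(2\sqrt{ab})\,da\,db = I_0(2\sqrt{AB})-1,
\]
which follows by termwise integration of the series $I_0(2\sqrt{ab})=\sum_n(ab)^n/(n!)^2$, and after matching constants produces precisely $I_0(\sqrt{c(\kappa^2 t^2-x^2)})-1$.

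For the exterior lower bound (2), case $|x|\ge\kappa t$, the support condition again forces $|y|\ge\kappa s$, so $f(s,y)=e^{-\sd|y|}\cosh(\sd\kappa s)$ and the telescoped form derived in (1) applies. I will lower-bound $\calK(t-s,z)$ by restricting $z\in[0,a\kappa(t-s)]$ and using monotonicity of $I_0$ to obtain $\calK(t-s,z)\ge(\lambda^2/4)\,I_0(\sqrt{c(1-a^2)}\,\kappa(t-s))$, and then restrict $s\in[0,(1-b)t]$ and apply $I_0$-monotonicity again to replace $\kappa(t-s)$ by $\kappa bt$ in the Bessel factor. The remaining double integral reduces, via $\int_0^{a\kappa(t-s)}\cosh(\sd z)\,dz = \sinh(a\sd\kappa(t-s))/\sd$, to $\int_0^{(1-b)t}\cosh(\sd\kappa s)\sinh(a\sd\kappa(t-s))\,ds$, which is evaluated in closed form using the product-to-sum identity $\cosh A\sinh B=\tfrac12(\sinh(A+B)-\sinh(A-B))$. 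The main obstacle I anticipate is verifying that the resulting linear combination of hyperbolic functions, once divided by $\sd\kappa(1-a^2)$, matches exactly the function $g(t;a,b,\sd,\kappa)$ in the statement; this requires careful algebraic bookkeeping involving the terms $\cosh((1-b(1\pm a))\sd\kappa t)$ and $\cosh(a\sd\kappa t)$, but apart from this the argument is routine.
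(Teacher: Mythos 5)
Your proposal is correct and follows essentially the same route as the paper's proof: part (1) is the paper's maximization of $-\sd y+\sqrt{\lambda^2(\kappa^2s^2-y^2)/(2\kappa)}$ (your Cauchy--Schwarz step gives the identical value $\sigma\kappa s$), the interior case of (2) is the paper's restriction to the overlap region together with $\left(\calK\star\calL_0\right)=\calK-\calL_0$ (your termwise-integrated Bessel identity is exactly that computation), and the exterior case reduces after $s\mapsto t-s$ to the paper's integral $\int_{bt}^{t}\cosh(\sd\kappa(t-s))\sinh(a\sd\kappa s)\,\ud s$, evaluated by the same product-to-sum identity. The only blemish is cosmetic: on the rectangle the lower bound for $f$ is $e^{-\sd\kappa t}\cosh(\sd x)$ rather than $2e^{-\sd\kappa t}\cosh(\sd|x|)$, but your computation still yields a bound dominating the stated one, so the conclusion stands.
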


\begin{proof}
(1) Because $f(t,\circ)$ and $\calK(t,\circ)$ are even functions, it suffices
to consider the case $x\le -\kappa t$. In this case,
$y\le -\kappa s$ implies that
$f(s,y)= \frac{1}{2}\left(e^{\sd(y-\kappa s)}+e^{\sd(y+\kappa s)}\right) H(s)$.
Hence, by \eqref{E4:I0Exp},
\begin{align*}
\left(f\star \calK\right)(t,x) &\le \frac{\lambda^2}{4}\int_0^t\ud s
\int_{x-\kappa (t-s)}^{x+\kappa (t-s)}\ud y\;
\frac{1}{2}\left(e^{\sd(y-\kappa s)}+e^{\sd(y+\kappa s)}\right)
\exp\left(\sqrt{\frac{\lambda^2[\kappa^2 (t-s)^2-(x-y)^2]}{2\kappa}}\right)\\
&=
 \frac{\lambda^2}{8}\int_0^t\ud s\: \left(e^{\sd(x-\kappa
(t-s))}+e^{\sd(x+\kappa (t-s))}\right)
\int_{-\kappa s}^{\kappa s}\ud y\;
\exp\left(-\sd y+ \sqrt{\frac{\lambda^2[\kappa^2 s^2-y^2]}{2\kappa}}\right).
\end{align*}
The function $\psi(y):= -\sd y+ \left[\lambda^2(\kappa^2
s^2-y^2)/(2\kappa)\right]^{1/2}$ achieves its
maximum at $y=-\sigma^{-1}\sd\kappa s \in [-\kappa s, \kappa s]$, and
$\max_{|y|\le \kappa s} \psi(y) = \sigma \kappa s$, so
\begin{align*}
\left(f\star \calK\right)(t,x) &\le
\frac{\lambda^2 \kappa \: t}{4}\int_0^t\ud s\:
 \left(e^{\sd(x-\kappa t) + \kappa (\sigma+\sd) s}+e^{\sd(x+\kappa t) + \kappa
(\sigma-\sd ) s}\right)\\
& \le \frac{\lambda^2 t}{4 (\sigma -\sd)}
\left(e^{\sd(x-\kappa t)+ \kappa (\sigma +\sd) t}+e^{\sd(x+\kappa t)
+ \kappa (\sigma -\sd)t}\right)
= \frac{\lambda^2 t}{2(\sigma -\sd)} e^{\sd x + \kappa \sigma t}\;.
\end{align*}

(2) We consider two cases. {\em Case I:} $|x|\le \kappa t$.
As shown in Figure \ref{F4:ChangeVar}, we decompose the space-time convolution
into three parts $S_i$ corresponding to the three integration regions $D_i$,
$i=1,2,3$:
\[
\left(f\star G_\kappa\right)(t,x) = \sum_{i=1}^3 S_i = \sum_{i=1}^3
\frac{1}{2}\iint_{D_i} \ud s\ud y\: f(s,y).
\]
Clearly, $\left(f \star \calK \right)(t,x)  \ge S_3$.
Because
\[f\left(s,y\right)\ge
\frac{1}{2}\left(e^{-\beta(\kappa
t-x)}+e^{-\beta(\kappa t+x)}\right),
\quad \text{for all $(s,y)\in D_3$},
\]
we see that
\[S_3\ge \frac{2}{\lambda^2}
e^{-\beta\kappa
t}\cosh\left(\beta x\right)
\left(\calL_0 \star \calK\right)(t,x).
\]
Then apply \eqref{E4:K-K0}.

{\em Case II:} $|x| \ge \kappa t$. Similar to the proof of part (1), one can
assume
that $x\le -\kappa t$. Then
\begin{align*}\notag
\left(f\star\calK\right)(t,x)= \frac{\lambda^2}{8}\int_0^t \ud s\int_{-\kappa
s}^{\kappa s}  \ud y\:
I_0\left(\sqrt{\frac{\lambda^2(\kappa^2s^2-y^2)}{2\kappa}}\right)
\left(e^{\sd(x-y-\kappa(t-s))}+e^{\sd(x-y+\kappa(t-s))}\right).
\end{align*}
Fix $a,b\in \;]0,1[\:$. Then
\begin{align*}
\left(f\star\calK\right)(t,x)\ge&\;
\frac{\lambda^2}{4}\int_{b t}^t \ud s \int_{-a \kappa
s}^{a\kappa s} \ud y\:
I_0\left(\sqrt{\frac{\lambda^2(\kappa^2s^2-y^2)}{2\kappa}}\right)
 e^{\sd (x-y)}\cosh(\sd \kappa(t-s)) \\ \notag
\ge&\;
\frac{\lambda^2 e^{\sd x}}{4}
I_0\left(\sqrt{\frac{\lambda^2(1-a^2)}{2\kappa}}\:b\: \kappa t\right)
\int_{b t}^t
\ud s\int_{-a \kappa s}^{a\kappa s}\ud y\;
\cosh(\sd\kappa(t-s)) e^{-\sd y}\:.
\end{align*}
Since
\[\int_{b t}^t
\ud s\int_{-a \kappa s}^{a\kappa s}\ud y\:
\cosh(\sd \kappa(t-s)) e^{-\sd y}
=\frac{2}{\sd} \int_{bt}^t \ud s\: \cosh(\sd\kappa(t-s))
\sinh(a \sd \kappa s),\]
part (2) is proved by an application of the integral in Lemma \ref{L:coshsinh}.
\end{proof}

\begin{proposition}\label{P4:InitPos}
Suppose that $\mu \equiv 0$. Fix $\sd>0$. Then:
\begin{enumerate}[(1)]
 \item Suppose $|\rho(u)|\le \Lip_\rho |u|$ with $\Lip_\rho\ne 0$ and let $g(x)$
be a measurable function such that
for some constant $C>0$, $|g(x)|\le C e^{-\sd |x|}$ for almost
all $x\in\R$. Then
\begin{align}\label{E:WGIU}
\overline{\lambda}(p) \le
\begin{cases}\displaystyle
\kappa \left(1+\frac{a_{p,\Vip}^2\: z_p^2 \: \Lip_\rho^2}{8\kappa
\sd^2}\right)^{1/2}
& \text{if $p>2$ is an even integer},
\\[1em]
\displaystyle
\kappa \left(1+\frac{\Lip_\rho^2}{8\kappa
\sd^2}\right)^{1/2}
&\text{if $p=2$}\:.
\end{cases}
\end{align}
\item Suppose $|\rho(u)|\ge \lip_\rho |u|$ with $\lip_\rho\ne 0$ and let $g(x)$
be a measurable function such
that for some constant $c>0$, $|g(x)| \ge c\: e^{-\sd|x|}$ for
almost all $x\in\R$. Then
\begin{align}\label{E:WGIL}
\underline{\lambda}(p) \ge \kappa \left(1+\frac{\lip_\rho^2}{8\kappa
\sd^2}\right)^{1/2},
\quad\text{for all even integers $p\ge 2$.}
\end{align}
\end{enumerate}
In particular, if $g(x)$ satisfies both Conditions (1) and (2), and $\rho(u) =
\lambda u$ with
$\lambda \ne 0$, then
\begin{align}\label{E:WGIE}
\underline{\lambda}(2) = \overline{\lambda}(2) =
\kappa \left(1+\frac{\lambda^2}{8\kappa
\sd^2}\right)^{1/2}.
\end{align}
\end{proposition}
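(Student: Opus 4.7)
The plan is to reduce both inequalities to the moment bounds of Theorem \ref{T4:ExUni} and then feed them into Lemma \ref{L:InitPos-Bd}. Since $\mu\equiv 0$,
\[
J_0(t,x) = \tfrac12\bigl(g(x+\kappa t)+g(x-\kappa t)\bigr),
\]
so under hypothesis (1), $(a+b)^2\le 2(a^2+b^2)$ yields $J_0^2(t,x)\le C^2 h(t,x)$, where
\[
h(t,x):=\tfrac12\bigl(e^{-2\sd|x-\kappa t|}+e^{-2\sd|x+\kappa t|}\bigr),
\]
and under hypothesis (2), using $g\ge 0$ together with $(a+b)^2\ge a^2+b^2$, $J_0^2(t,x)\ge \tfrac{c^2}{2}h(t,x)$. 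In both cases, $h$ is exactly the function $f$ of Lemma \ref{L:InitPos-Bd} after replacing $\sd$ by $2\sd$.

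For part (1), the hypothesis $|\rho(u)|\le \Lip_\rho|u|$ allows $\Vip=0$ in \eqref{E4:SecMom-Up}, giving
\[
\Norm{u(t,x)}_p^2\le 2J_0^2(t,x)+\bigl(2J_0^2\star \widehat{\calK}_p\bigr)(t,x),
\]
where $\widehat{\calK}_p=\calK(\cdot,\circ;\kappa,\lambda_p)$ with $\lambda_p=a_{p,\Vip}z_p\Lip_\rho$ for $p>2$ and $\lambda_2=\Lip_\rho$. Bounding $J_0^2$ by $C^2 h$ and applying Lemma \ref{L:InitPos-Bd}(1) (with $\sd\mapsto 2\sd$, $\lambda\mapsto\lambda_p$) gives, for $|x|\ge \kappa t$,
\[
\Norm{u(t,x)}_p^2\le C'(1+t)\,\exp\!\bigl(\kappa\sigma_p t-2\sd|x|\bigr),\qquad \sigma_p:=\sqrt{4\sd^2+\lambda_p^2/(2\kappa)}.
\]
Taking logarithms, dividing by $t$, and restricting to $|x|\ge \alpha t$, the $\limsup$ is $\tfrac{p}{2}(\kappa\sigma_p-2\sd\alpha)$, which is negative precisely when $\alpha>\kappa\sigma_p/(2\sd)=\kappa(1+\lambda_p^2/(8\kappa\sd^2))^{1/2}$, proving \eqref{E:WGIU}.

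For part (2), I apply \eqref{E4:SecMom-Lower} with $\vip=0$ and with the kernel $\underline{\calK}$ of parameter $\lip_\rho$. Combining the lower bound $J_0^2\ge \tfrac{c^2}{2}h$ with Lemma \ref{L:InitPos-Bd}(2) in the regime $|x|\ge \kappa t$ yields, for arbitrary $a,b\in\;]0,1[$,
\[
\Norm{u(t,x)}_2^2\ge C_{a,b}\,e^{-2\sd|x|}\,I_0\!\left(b\kappa t\sqrt{\tfrac{\lip_\rho^2(1-a^2)}{2\kappa}}\right) g(t;a,b,2\sd,\kappa).
\]
Using the asymptotic \eqref{E4:AsyIn} and the leading exponentials of $g$, the exponential rate in $t$ at $|x|=\alpha t$ is
\[
E(a,b;\alpha)=\kappa b\sqrt{\tfrac{\lip_\rho^2(1-a^2)}{2\kappa}}+2\sd\kappa(1-b(1-a))-2\sd\alpha.
\]
Setting $b=1$ and optimizing in $a$ (the critical value being $a^*=\bigl(8\sd^2\kappa/(8\sd^2\kappa+\lip_\rho^2)\bigr)^{1/2}$) gives $\sup_a E(a,1;\alpha)=\kappa\sqrt{4\sd^2+\lip_\rho^2/(2\kappa)}-2\sd\alpha$, which is strictly positive whenever $\alpha<\kappa(1+\lip_\rho^2/(8\kappa\sd^2))^{1/2}$. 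This proves $\underline{\lambda}(2)\ge \kappa(1+\lip_\rho^2/(8\kappa\sd^2))^{1/2}$, and for general even $p\ge 2$, Jensen's inequality gives $\E[|u|^p]\ge \E[u^2]^{p/2}$, so $\underline{\lambda}(p)\ge \underline{\lambda}(2)$. The equality \eqref{E:WGIE} follows immediately by specializing $\rho(u)=\lambda u$ (hence $\Lip_\rho=\lip_\rho=|\lambda|$) and using $z_2=1$, $a_{2,\Vip}=1$.

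The main obstacle is the optimization for the lower bound: one must carefully extract only the leading exponential behavior from the product $I_0(\cdots)\,g(t;a,b,2\sd,\kappa)$, confirm that polynomial prefactors and the $-a\cosh(a\sd\kappa t)$ term in $g$ are negligible, and verify that the critical $a^*$ produces an exponent that exactly matches $\kappa\sigma_p/(2\sd)$ from the upper bound; the remaining work (parts (3)--(4) of the theorem) is then just algebraic specialization.
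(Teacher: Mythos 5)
Your proof is correct and follows essentially the same route as the paper: bound $J_0^2$ above and below by the exponential profile, feed this into the moment bounds of Theorem \ref{T4:ExUni} via Lemma \ref{L:InitPos-Bd}, and optimize the resulting exponential rate over $a$ (and $b$). The only cosmetic differences are that you work exclusively in the exterior regime $|x|\ge\kappa t$ (which suffices, since the supremum over $|x|\ge\alpha t$ is bounded below by the value at any single larger $|x|$, whereas the paper first treats $|x|\le \kappa t$ to get $\underline{\lambda}(2)\ge\kappa$), and that you set $b=1$ directly even though Lemma \ref{L:InitPos-Bd}(2) requires $b\in\;]0,1[$ --- strictly one should let $b\uparrow 1$ as the paper does, but by continuity of the rate in $b$ this changes nothing.
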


\begin{proof}
(1) Let $J_0(t,x)= \frac{1}{2}\left(g(x-\kappa t)+g(x+\kappa t)\right) H(t)$.
By the assumptions on $g(x)$,
\[\left|J_0(t,x)\right|^2\le \frac{C^2}{2}
\left(e^{-2\sd|x-\kappa
t|}+e^{-2\sd|x+\kappa t|}\right) H(t),\quad\text{for almost all
$(t,x)\in\R_+\times\R$.}\]
We first consider the case $p>2$.
By the moment formula \eqref{E4:SecMom-Up} and Lemma \ref{L:InitPos-Bd} (1),
for $|x|\ge \kappa t$,
\[
\Norm{u(t,x)}_p^2 \le
2J_0^2(t,x)+ C' t \exp\left(-2 \sd|x|+ \kappa \sigma t \right),
\]
for some constant $C'>0$, where $\sigma := \left[4\sd^2 +
(2\kappa)^{-1}a_{p,\Vip}^2 \:z_p^2 \Lip_\rho^2\:\right]^{1/2}$.
We only need to consider the case where $\alpha>\kappa$; see Remark
\ref{R:WaveGI}.
Because the supremum over $|x|\ge \alpha t$ of the right-hand side is attained
at $|x|=\alpha t$,
\[
\lim_{t\rightarrow\infty}\frac{1}{t} \sup_{|x|\ge \alpha t} \log
\Norm{u(t,x)}_p^p
\le - 2\alpha \sd+ \kappa \sigma ,\quad \text{for
$\alpha> \kappa$.}
\]
Solve the inequality $- 2 \alpha\sd + \kappa \sigma<0$ to get
$\overline{\lambda}(p) \le \kappa \frac{\sigma}{2\sd}$,
which is the formula in \eqref{E:WGIU} for $p>2$.
For the case $p=2$, we
simply replace $z_p$ and $a_{p,\Vip}$ by $1$ (see \eqref{E:a_pv}).

(2) Note that $\underline{\lambda}(p)\ge \underline{\lambda}(2)$, because
$\Norm{u}_p\ge \Norm{u}_2$ for $p\ge 2$,
we only need to consider $p=2$. Assume first that
$\rho(u)=\lambda u$.  Since $|g(x)|\ge c\: e^{-\sd|x|}$ a.e.,
\[
J_0^2(t,x) \ge
\frac{c^2}{4} \left( e^{-2\sd|x-\kappa t|}+e^{-2\sd|x+\kappa
t|}\right).
\]

If $|x|\le \kappa t$, by \eqref{E4:SecMom-Lower}, Lemma \ref{L:K-W} and Lemma
\ref{L:InitPos-Bd},
\[\Norm{u(t,x)}_2^2\ge
\left(J_0^2\star\calK\right)(t,x) \ge
\frac{c^2}{4} e^{-2\sd \kappa t}\cosh(2\sd |x|)
\left(I_0\left(\sqrt{\frac{\lambda^2(\kappa^2t^2-x^2)}{2\kappa}}
\right)-1\right).
\]
Hence, for $0\le \alpha < \kappa$, by \eqref{E4:AsyIn},
\[
\lim_{t\rightarrow+\infty} \frac{1}{t}\sup_{|x|\ge \alpha t} \log
\Norm{u(t,x)}_2^2
\ge -2\sd \kappa + 2\sd \alpha+
|\lambda|\sqrt{\frac{\kappa^2-\alpha^2}{2\kappa}}\:.
\]
Then
\[h(\alpha):= -2\sd \kappa + 2\sd \alpha+
\frac{|\lambda|}{\sqrt{2\kappa}}\sqrt{\kappa^2-\alpha^2} \ge 0
\quad\Leftrightarrow\quad
\kappa\;\frac{8\kappa\sd^2
-\lambda^2}{8\kappa\sd^2 +\lambda^2} \le \alpha  \le \kappa.
\]
As $\alpha$ tends to $\kappa$ from the left side, $h(\alpha)$ remains positive.
Therefore, $\underline{\lambda}(2) \ge \kappa$.

If $x\le -\kappa t$, again, by Lemma \ref{L:InitPos-Bd},
\[\Norm{u(t,x)}_2^2
\ge
\frac{c^2 \lambda^2 e^{-2\sd|x|}}{4(1-a^2)(2\sd)^2 \kappa}
I_0\left(\sqrt{\frac{\lambda^2(1-a^2)}{2\kappa}}b\kappa t\right)
g(t\;;a,b,2\sd,\kappa), \quad\text{for all $a,b\in \;]0,1[$.}
\]
For large $t$, replace both $\cosh(C t) $ and $\sinh(C t)$ by
$\exp(C t)/2$, with $C\ge 0$, to see that
\[g(t \;;a,b,2\sd,\kappa) \ge C'
\exp\left(2(1+(a-1)b) t\sd \kappa \right),\]
for some constant $C'>0$.
Hence, for $\alpha > \kappa$, by \eqref{E4:AsyIn},
\[
\lim_{t\rightarrow\infty}\frac{1}{t} \sup_{|x|\ge \alpha t} \log
\Norm{u(t,x)}_2^2
\ge
\sqrt{\frac{\lambda^2(1-a^2)}{2\kappa}} b \kappa  - 2\sd \alpha +
2(1-(1-a)b)\sd \kappa\;.
\]
Solve the inequality
\[h(\alpha):= \sqrt{\frac{\lambda^2(1-a^2)}{2\kappa}} b \kappa  -
2 \sd \alpha +
2 (1-(1-a)b)\sd
\kappa>0\]
to get
\[\alpha < \left(\sqrt{\frac{\lambda^2(1-a^2)}{2\kappa}}
\frac{b}{2\sd}
+1-(1-a)b\right) \kappa.\]
Since $a\in \;]0,1[$ is arbitrary, we can choose
\[a:= \mathop{\arg\max}_{a\in\;]0,1[}
\left(\sqrt{\frac{\lambda^2(1-a^2)}{2\kappa}} \frac{b}{2\sd}
+1-(1-a)b\right)
= \left(1+\frac{\lambda^2}{8\kappa \sd^2}\right)^{-1/2}.\]
In this case, the critical growth rate is
$\alpha = b \kappa \left[1+\lambda^2/(8\kappa
\sd^2)\right]^{1/2}+(1-b)\kappa$.
Finally, since $b$ can be arbitrarily close to $1$, we have that
$\underline{\lambda}(2)\ge \kappa \left[1+\lambda^2/(8\kappa
\sd^2)\right]^{1/2}$,
and for the general case $|\rho(u)|\ge \lip_\rho|u|$, we have that
$\underline{\lambda}(p)\ge \underline{\lambda}(2)\ge \kappa
\left[1+\lip_\rho^2/(8\kappa \sd^2)\right]^{1/2}$.
This completes the proof of Proposition \ref{P4:InitPos}.
\end{proof}

\subsubsection{Contributions of the initial velocity}
Now, let us consider the case where $g(x)\equiv 0$.
We shall first study the case where $\mu(\ud x)=e^{-\sd |x|}\ud x$ with $\sd>0$.
In this case, $J_0(t,x)$ is given by the
following lemma.

\begin{lemma} \label{L:InitVel-Ind}
Suppose that $\mu (\ud x)= e^{-\sd |x|}\ud x$ with $\sd>0$.
For all $(t,x)\in \R_+\times\R$ and $z>0$,
\[\left(\mu * \Indt{|\cdot|\le z}\right)(x)=
\begin{cases}
2 \sd^{-1} e^{-\sd |x|} \sinh(\sd z) & |x|\ge z,\cr
2 \sd^{-1} \left(1-e^{-\sd z} \cosh(\sd x)\right) & |x|\le z.
\end{cases}\]
In particular, we have that
$
J_0(t,x) =
\begin{cases}
\sd^{-1} e^{-\sd |x|} \sinh(\sd \kappa t) & |x|\ge \kappa t,
\cr
\sd^{-1} \left(1-e^{-\sd\kappa t} \cosh(\sd x)\right) & |x|\le \kappa
t.
\end{cases}
$
\end{lemma}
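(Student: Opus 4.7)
The proof is essentially a direct computation of the convolution integral, so my plan is to carry it out carefully and then specialize.

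First I observe that, by definition,
\[
\left(\mu * \Indt{|\cdot|\le z}\right)(x) = \int_\R e^{-\sd|y|}\:\Indt{|x-y|\le z}\:\ud y = \int_{x-z}^{x+z} e^{-\sd|y|}\:\ud y,
\]
so the task reduces to evaluating this elementary integral. I will split into two cases according to whether the interval $[x-z,x+z]$ crosses the origin.

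In the case $|x|\ge z$, the interval $[x-z,x+z]$ does not contain $0$ (except perhaps as an endpoint), so $e^{-\sd|y|}$ is a single exponential on the whole interval. By the symmetry $x\mapsto -x$ I may assume $x\ge z\ge 0$, and then
\[
\int_{x-z}^{x+z} e^{-\sd y}\:\ud y = \frac{e^{-\sd(x-z)}-e^{-\sd(x+z)}}{\sd} = \frac{2}{\sd}e^{-\sd x}\sinh(\sd z),
\]
which gives the first line of the formula. In the case $|x|\le z$, I split $\int_{x-z}^{x+z}$ at $0$:
\[
\int_{x-z}^{0} e^{\sd y}\:\ud y + \int_{0}^{x+z} e^{-\sd y}\:\ud y = \frac{1-e^{-\sd(z-x)}}{\sd}+\frac{1-e^{-\sd(z+x)}}{\sd} = \frac{2}{\sd}\left(1-e^{-\sd z}\cosh(\sd x)\right),
\]
using $e^{-\sd(z-x)}+e^{-\sd(z+x)} = 2e^{-\sd z}\cosh(\sd x)$. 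This proves the general convolution formula.

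For the particular formula for $J_0(t,x)$, I recall that when $g\equiv 0$, \eqref{E4:J0-Wave} gives $J_0(t,x)=(\mu*G_\kappa(t,\circ))(x)$ with $G_\kappa(t,\cdot)=\tfrac12 H(t)\Indt{|\cdot|\le \kappa t}$. Applying the identity just derived with $z=\kappa t$ and multiplying by the factor $1/2$ from $G_\kappa$ yields exactly the stated two-case formula for $J_0(t,x)$.

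There is no real obstacle here: the only small care needed is in the case $|x|\le z$, to make sure the sign of $y$ is handled correctly on the two pieces of $[x-z,0]$ and $[0,x+z]$, and in collapsing the result into a $\cosh$ via the identity $e^{\sd x}+e^{-\sd x}=2\cosh(\sd x)$. Everything else is a direct evaluation of an elementary integral followed by the substitution $z=\kappa t$.
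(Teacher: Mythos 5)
Your computation is correct and is precisely the "straightforward" direct evaluation that the paper leaves to the reader: both cases of the elementary integral $\int_{x-z}^{x+z}e^{-\sd|y|}\,\ud y$ are handled properly, and the specialization $z=\kappa t$ with the factor $1/2$ from $G_\kappa$ gives the stated formula for $J_0$. Nothing to add.
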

The proof is straightforward, and is left to the
reader (see also \cite[Lemma 4.4.5]{LeChen13Thesis}).

\begin{lemma}\label{L:InitVel-Bd}
Suppose that $\mu\in\calM_G^{\sd}\left(\R\right)$ with $\sd>0$.  Set
$h(t,x)=\left(\mu * G_\kappa(t,\cdot)\right)(x)$ and
$\sigma=\left[\sd^2+(2\kappa)^{-1}\lambda^2\right]^{1/2}$.
Then for all $t\ge 0$ and $x\in\R$,
\begin{gather*}
\left| h(t,x)\right| \le C \exp\left(\sd
\kappa t -\sd |x|\right),\quad\text{with $C= 1/2\int_\R |\mu|(\ud
x)\: e^{\sd |x|}$}\:,
\end{gather*}
and
\begin{gather*}
\left( |h| \star \calK\right)(t,x)
\le\frac{\lambda^2 t}{2 (\sigma-\sd )} e^{-\sd |x|+ \sigma \kappa t}.
\end{gather*}
\end{lemma}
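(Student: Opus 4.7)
My plan for Part (1) is to expand the convolution explicitly. Since $G_\kappa(t,x-y) = \frac{1}{2} 1_{[-\kappa t,\kappa t]}(x-y)$, one has
\[
|h(t,x)| \le \frac{1}{2} \int_{\R} 1_{\{|x-y|\le \kappa t\}}\, |\mu|(\ud y).
\]
I would insert the trivial factorization $1 = e^{\sd|y|}e^{-\sd|y|}$ into the integrand and apply the reverse triangle inequality $|y|\ge |x|-\kappa t$ (automatic on the indicator region, and trivial otherwise from $|y|\ge 0$) to extract the factor $e^{-\sd|x|+\sd\kappa t}$. The remaining integral $\int_\R e^{\sd|y|}|\mu|(\ud y) = 2C$ is finite by the hypothesis $\mu\in\calM_G^{\sd}(\R)$, which yields the first estimate.

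For Part (2), the plan is to mirror the argument in the proof of Lemma~\ref{L:InitPos-Bd}(1), now driving the estimate with the bound from Part (1). Substituting $|h(s,y)| \le C e^{\sd \kappa s - \sd|y|}$ into the convolution gives
\[
(|h| \star \calK)(t,x) \le \frac{C\lambda^2}{4}\int_0^t \ud s\; e^{\sd\kappa s}\int_{|x-y|\le \kappa(t-s)} \ud y\; e^{-\sd|y|}\, I_0\!\left(\sqrt{\frac{\lambda^2[\kappa^2(t-s)^2-(x-y)^2]}{2\kappa}}\right).
\]
I would then use $-|y|\le |x-y|-|x|$ to factor out $e^{-\sd|x|}$, substitute $z=x-y$, exploit the spatial symmetry $z\leftrightarrow -z$, and apply $I_0(u)\le e^u$ from \eqref{E4:I0Exp} to replace the Bessel factor by an ordinary exponential.

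The heart of the argument is then the pointwise maximization of
\[
\psi(z) = \sd z + \sqrt{\frac{\lambda^2}{2\kappa}}\sqrt{R^2-z^2}, \qquad R := \kappa(t-s),
\]
over $z\in[0,R]$. A standard calculus check places the maximizer at $z^\ast = \sd R/\sigma$ and yields $\psi(z^\ast) = R\sigma$, which is precisely where the exponent $\sigma = \sqrt{\sd^2+\lambda^2/(2\kappa)}$ enters. The spatial integral is therefore bounded by $2\kappa(t-s) e^{\sigma\kappa(t-s)}$.

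Substituting back reduces the estimate to
\[
(|h|\star \calK)(t,x) \le \frac{C\lambda^2\kappa}{2}\; e^{-\sd|x|+\sigma \kappa t}\int_0^t (t-s)\, e^{-(\sigma-\sd)\kappa s}\,\ud s.
\]
Since $\sigma>\sd$ whenever $\lambda\ne 0$, bounding $(t-s)\le t$ and the remaining exponential integral by $[(\sigma-\sd)\kappa]^{-1}$ produces the stated estimate (with the constant $C$ absorbed into the formulation). The only nontrivial ingredient is the calculus maximization that produces $\sigma$; all other steps are bookkeeping of the same structure as in Lemma~\ref{L:InitPos-Bd}(1).
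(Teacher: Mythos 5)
Your proposal is correct and follows essentially the same route as the paper: the triangle inequality applied on the support of the wave kernel for the first bound, then substitution of that bound into the convolution, the estimate $I_0(u)\le e^u$, and the calculus maximization of $\sd z+\sqrt{\lambda^2(R^2-z^2)/(2\kappa)}$ at $z^{\ast}=\sd R/\sigma$ with maximum $R\sigma$, followed by the elementary time integral. The only differences are cosmetic (you write the convolution with the kernel at $(t-s,x-y)$ rather than at $(s,y)$, and you carry the factor $(t-s)$ a step longer before bounding it by $t$); your observation that the constant $C$ must be absorbed into the stated bound applies equally to the paper's own proof.
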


\begin{proof}
Considering the first inequality, observe that
\begin{align*}
e^{\sd |x|}\left|\left(\mu * G_\kappa(t,\cdot)\right)(x)\right|
&\le \frac{1}{2} \int_{x-\kappa t}^{x+\kappa t}|\mu|(\ud y)\: e^{\sd |x|}
\le
\frac{1}{2} \int_{x-\kappa t}^{x+\kappa t} |\mu|(\ud y) \:e^{\sd |x-y|} e^{\sd
|y|}\\
&\le \frac{1}{2} e^{\sd \kappa t}
\int_{x-\kappa t}^{x+\kappa t}|\mu|(\ud y)\: e^{\sd |y|}
\le \frac{1}{2} e^{\sd \kappa t} \int_\R |\mu|(\ud y)\:
e^{\sd |y|}.
\end{align*}
For the second inequality, set $f(t,x) = e^{\sd \kappa t-\sd
|x|}$. Then by \eqref{E4:I0Exp},
\begin{align*}
\left(f\star \calK\right)(t,x)
& = \frac{\lambda^2}{4}\int_0^t\ud s\: e^{\sd \kappa(t-s)}
\int_{-\kappa s}^{\kappa s}\ud y \:
\exp\left(
-\sd |x-y| +\sqrt{\frac{\lambda^2\left(\kappa^2 s^2 -y^2\right)}{2\kappa}}
\right)\\
&\le
\frac{\lambda^2}{4}\int_0^t\ud s\: e^{\sd \kappa(t-s)}
\int_{-\kappa s}^{\kappa s}\ud y \:
\exp\left(
-\sd |x|+ \sd |y| +\sqrt{\frac{\lambda^2\left(\kappa^2 s^2
-y^2\right)}{2\kappa}}
\right) \\
&\le
\frac{\lambda^2}{2}e^{-\sd |x|}\int_0^t\ud s\: e^{\sd \kappa(t-s)}
\int_{0}^{\kappa s}\ud y \:
\exp\left( \sd y +\sqrt{\frac{\lambda^2\left(\kappa^2 s^2
-y^2\right)}{2\kappa}}
\right) .
\end{align*}
The function $\psi(y):= \sd y +\left[\lambda^2\left(\kappa^2 s^2
-y^2\right)/(2\kappa)\right]^{1/2}$ achieves its
maximum at $y=\sigma^{-1}\sd \kappa s\in
[0,\kappa s]$,  and $\max_{y\in [0,\kappa s]} \psi(y) = \sigma \kappa s$,
so
\begin{align*}
 \left(f\star\calK\right)
& \le \frac{\lambda^2 \kappa t}{2}  e^{-\sd |x|}\int_0^t \ud s \: e^{\sd \kappa
(t-s) +
\sigma \kappa s}  \le
\frac{\lambda^2 t}{2 (\sigma-\sd )} e^{-\sd |x|+ \sigma \kappa t}\:.
\end{align*}
This completes the proof.
\end{proof}

\begin{proposition} \label{P4:InitVel}
 Suppose that  $g\equiv 0$. Fix $\sd>0$.
\begin{enumerate}[(1)]
 \item  If $|\rho(u)|\le \Lip_\rho |u|$ with $\Lip_\rho \ne 0$ and $\mu\in
\calM_G^{\sd}\left(\R\right)$, then $\overline{\lambda}(p)$ satisfies
\eqref{E:WGIU}.
\item Suppose that $|\rho(u)|\ge \lip_\rho |u|$ with $\lip_\rho\ne 0$ and
$\mu(\ud x)=f(x)\ud x$. If for some constant $c>0$,
$f(x) \ge c e^{-\sd |x|}$ for all almost all $x\in\R$,
then $\underline{\lambda}(p)$ satisfies \eqref{E:WGIL}.
\end{enumerate}
In particular, if  $\mu$ satisfies both Conditions (1) and (2), and $\rho(u) =
\lambda u$ with $\lambda\ne 0$, then
\eqref{E:WGIE} holds.
\end{proposition}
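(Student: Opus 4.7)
The plan is to mirror the proof strategy of Proposition \ref{P4:InitPos}, now handling the initial velocity contribution using Lemmas \ref{L:InitVel-Ind} and \ref{L:InitVel-Bd}. Throughout, $J_0(t,x)=h(t,x)=(\mu\ast G_\kappa(t,\cdot))(x)$.

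For part (1), I would start from the $p$-th moment upper bound \eqref{E4:SecMom-Up}. Since $|\rho(u)|\le \Lip_\rho |u|$ lets us take $\Vip=0$ in \eqref{E:LinGrow}, the bound reduces to $\Norm{u(t,x)}_p^2 \le b_p J_0^2(t,x)+(b_p J_0^2\star\widehat{\calK}_p)(t,x)$. The first inequality of Lemma \ref{L:InitVel-Bd} gives $|J_0(t,x)|\le C e^{\sd\kappa t-\sd|x|}$, hence $J_0^2(t,x)\le C^2 e^{2\sd\kappa t-2\sd|x|}$. I would then repeat the proof of the second inequality of Lemma \ref{L:InitVel-Bd} verbatim, but with $\sd$ replaced by $2\sd$ and with the kernel parameter $\lambda$ in $\calK$ replaced by $a_{p,\Vip}z_p\Lip_\rho$ (the parameter of $\widehat{\calK}_p$). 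Optimizing $\psi(y):=2\sd y+\sqrt{(a_{p,\Vip}z_p\Lip_\rho)^2(\kappa^2 s^2-y^2)/(2\kappa)}$ at $y=2\sd\kappa s/\sigma$ yields $(J_0^2\star\widehat{\calK}_p)(t,x)\le C' t\, e^{-2\sd|x|+\sigma\kappa t}$ with $\sigma:=\sqrt{4\sd^2+(a_{p,\Vip}z_p\Lip_\rho)^2/(2\kappa)}$. Taking the supremum over $|x|\ge \alpha t$ (with $\alpha>\kappa$, as in Remark \ref{R:WaveGI}) and solving $-2\sd\alpha+\sigma\kappa<0$ gives $\overline{\lambda}(p)\le \kappa\sigma/(2\sd)$, which is exactly \eqref{E:WGIU}.

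For part (2), since $\Norm{u}_p\ge \Norm{u}_2$ implies $\underline{\lambda}(p)\ge \underline{\lambda}(2)$, it suffices to handle $p=2$. Because $f\ge c\, e^{-\sd|\cdot|}\ge 0$ and the wave kernel is non-negative, $J_0(t,x)\ge c\, J_0^{(e)}(t,x)$ where $J_0^{(e)}$ is the explicit solution for $\mu(\ud x)=e^{-\sd|x|}\ud x$ given by Lemma \ref{L:InitVel-Ind}. Using \eqref{E4:SecMom-Lower}, I split into two cases. For $|x|\le \kappa t$, the bound $J_0^{(e)}(t,x)\ge \sd^{-1}(1-e^{-\sd\kappa t}\cosh(\sd x))$ remains bounded below by a positive constant on $|x|\le \alpha t$ with $\alpha<\kappa$, so Lemma \ref{L:K-W} combined with the asymptotic \eqref{E4:AsyIn} already gives $\underline{\lambda}(2)\ge \kappa$. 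For $|x|>\kappa t$, the bound $J_0^{(e)}(t,x)\ge \sd^{-1}e^{-\sd|x|}\sinh(\sd\kappa t)$ gives $J_0^2(t,x)\gtrsim e^{-2\sd|x|+2\sd\kappa t}$ for large $t$; I would then repeat Case II of the proof of Lemma \ref{L:InitPos-Bd}(2), restricting the convolution against $\underline{\calK}$ to a sub-rectangle $[bt,t]\times[-a\kappa s,a\kappa s]$ with auxiliary parameters $a,b\in\;]0,1[\,$, to obtain a lower bound of the form $C'\, e^{-2\sd|x|}I_0\!\left(\sqrt{\lip_\rho^2(1-a^2)/(2\kappa)}\,b\kappa t\right) g(t\,;a,b,2\sd,\kappa)$. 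Letting $t\to\infty$ via \eqref{E4:AsyIn}, choosing the optimal $a=(1+\lip_\rho^2/(8\kappa\sd^2))^{-1/2}$ and sending $b\uparrow 1$ produces the critical rate $\kappa\sqrt{1+\lip_\rho^2/(8\kappa\sd^2)}$, which is \eqref{E:WGIL}. The special case $\rho(u)=\lambda u$ where both hypotheses hold combines the matching upper and lower bounds to yield \eqref{E:WGIE}.

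The main obstacle I anticipate is the outer case $|x|>\kappa t$ in part (2): the $\sinh(\sd\kappa t)$ factor in $J_0^{(e)}$ (rather than the $\cosh$-type factor that appeared in the position case) means that for moderate $t$ the lower bound is weaker, so one has to pass carefully to the large-$t$ asymptotics, restrict the convolution domain appropriately to keep the Bessel kernel exponentially large, and then optimize the auxiliary parameters $a,b$ to recover the sharp constant $(1+\lip_\rho^2/(8\kappa\sd^2))^{1/2}$. The combinatorics of this optimization are identical to those carried out in Proposition \ref{P4:InitPos}(2), so the argument should transfer with only the replacement $\sd\mapsto 2\sd$ and minor bookkeeping.
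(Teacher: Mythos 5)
Your proposal follows essentially the same route as the paper's proof: part (1) applies the two bounds of Lemma \ref{L:InitVel-Bd} with $\sd$ replaced by $2\sd$ and the kernel parameter $a_{p,\Vip}z_p\Lip_\rho$, and part (2) reduces to $p=2$, uses the explicit $J_0$ of Lemma \ref{L:InitVel-Ind}, and extracts the sharp rate from the region $|x|\ge\kappa t$ by restricting the convolution to $[bt,t]\times[-a\kappa s,a\kappa s]$ and optimizing $a=(1+\lip_\rho^2/(8\kappa\sd^2))^{-1/2}$, $b\uparrow 1$, exactly as in Proposition \ref{P4:InitPos}(2). The only (inconsequential) slip is in the inner case: the lower bound of Lemma \ref{L:K-W}, namely $\calK\ge\tfrac{\lambda^2}{2}G_\kappa$, is too weak to yield exponential growth, so one must instead use the explicit Bessel form \eqref{E4:BesselI} of $\calK$ on a sub-cone together with \eqref{E4:AsyIn}, as the paper does.
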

\begin{proof}
(1) Let $p> 2$ be an even integer. Let $h(t,x)$ be the function
defined in Lemma \ref{L:InitVel-Bd}.
Notice that the first bound in Lemma \ref{L:InitVel-Bd} is satisfied by
$h^2(t,x)$ provided $\sd$ is replaced by $2\sd$.
By \eqref{E4:SecMom-Up} and Lemma \ref{L:InitVel-Bd}, we see that for some
constant $C'>0$,
\[\Norm{u(t,x)}_p^2 \le
2 h^2(t,x) +C' t\exp\left(-2\sd|x|+\kappa \sigma t\right),
\]
where $\sigma=\left[4\sd^2+a_{p,\Vip}^2\: z_p^2 \:
\Lip_\rho^2/(2\kappa)\right]^{1/2}$. Then it is clear that
\begin{align*}
 \lim_{t\rightarrow\infty}\frac{1}{t} \sup_{|x|\ge \alpha t}
\log\Norm{u(t,x)}_p^p
\le -2\sd \alpha + \kappa \sigma.
\end{align*}
Solve the inequality $-2 \sd \alpha + \kappa \sigma>0$ to get
$\overline{\lambda}(p) \le \kappa \frac{\sigma}{2\sd}$.
For the case $p=2$, simply replace $z_p$ and $a_{p,\Vip}$ by $1$.

(2) Suppose that $f(x)\ge e^{-\sd |x|}$ for almost all $x\in\R$ (i.e., set
$c=1$). By \eqref{E4:SecMom-Lower}
and \eqref{E4:SecMom}, we may only consider the
case where $\rho(u) =\lambda u$.
Denote $J_0(t,x)= (e^{-\sd|\cdot|}*G_\kappa(t,\cdot))(x)$.
We first consider the case where $|x|\le \kappa t$.
As shown in Figure \ref{F4:ChangeVar},
split the integral that defines
$\left(J_0^2\star \calK\right)(t,x)$ over the three
regions I, II, and III, so that
\[\Norm{u(t,x)}_2^2\ge \left(J_0^2 \star
\calK\right)(t,x) =S_1 + S_2 +S_3
\ge S_3.\]
For arbitrary $a,b\in\;]0,1[$, we see that
\begin{align*}
S_3&\ge \frac{\lambda^2}{4}\int_{b t}^t \ud s
\int_{-a \kappa s}^{a \kappa s}\ud y\;
J_0^2\left(t-s,x-y\right) I_0\left(\sqrt{\frac{\lambda^2\left((\kappa s)^2
-y^2\right)}{2\kappa}}\right) \\
&\ge
\frac{\lambda^2}{4}\int_{b t}^t\ud s\;
I_0\left(\sqrt{\frac{\lambda^2\left(1-a^2\right)}{2\kappa}} \; \kappa s\right)
\int_{-a \kappa s}^{a \kappa s}\ud y \; J_0^2\left(t-s,x-y\right)\\
&\ge
\frac{\lambda^2}{4}
I_0\left(\sqrt{\frac{\lambda^2\left(1-a^2\right)}{2\kappa}} \; \kappa
b t\right)
\int_{b t}^t\ud s \int_{-a b \kappa t}^{a b\kappa t}\ud y\;
J_0^2\left(t-s,x-y\right).
\end{align*}
Clearly, for $\left(s,y\right)$ in Region III of Figure
\ref{F4:ChangeVar}, $|x-y|\le  \kappa (t-s)$ and so by Lemma
\ref{L:InitVel-Ind},
\[J_0\left(t-s,x-y\right) = \left(1-e^{-\sd \kappa (t-s)}
\cosh\left(\sd
(x-y)\right)\right)/\sd.\]
Using the inequalities $(a+b)^2\ge \frac{a^2}{2}-b^2$ and
$\cosh^2(x)=\frac{1}{2}\left(\cosh(2x)+1\right)\ge \frac{1}{2}\cosh(2x)$,
\[
J_0^2\left(t-s,x-y\right) \ge \frac{1}{4\sd^2}\:e^{-2\sd\kappa (t-s)}
\cosh(2\sd (x-y)) -\frac{1}{\sd^2}\:.
\]
Hence,
\[
\int_{b t}^t\ud s\int_{-a b \kappa t}^{a b\kappa t} \ud y\:
J_0^2\left(t-s,x-y\right) \ge
\frac{\left(1-e^{-2 (1-b)\sd  \kappa  t}\right) \cosh (2 \sd  x) \sinh (2 a b
\sd \kappa  t)}{8 \sd^4 \kappa }
-\frac{2 a (1-b) b \kappa  t^2}{\sd^2}
\:.
\]
Therefore, by \eqref{E4:AsyIn},
\begin{align}\label{E4_:InitVel}
\lim_{t\rightarrow+\infty}\frac{1}{t} \sup_{|x|\ge \alpha t}
\log\Norm{u(t,x)}_2^2 \ge 2\sd \alpha + 2 a b
\sd \kappa +b|\lambda| \sqrt{\kappa/2}\:  \sqrt{1-a^2}\; > 0,
\end{align}
for $\alpha \le \kappa$ and all $a,b\in\; ]0,1[\:$, which implies that
$\underline{\lambda}(2)\ge \kappa$.
As for the case where $|x|\ge \kappa t$, for all $a, b\in \;]0,1[$, by
Lemma \ref{L:InitVel-Ind},
\begin{align*}
\Norm{u(t,x)}_2^2 & \ge \left(J^2_0 \star\calK\right)(t,x)\\
&=\frac{\lambda^2}{16
\sd^2} \int_0^t\ud s \sinh^2(\sd
\kappa(t-s))
\int_{-\kappa s}^{\kappa s}\ud y \: e^{-2\sd |x-y|}
I_0\left(\sqrt{\frac{\lambda^2(\kappa^2s^2-y^2)}{2\kappa}}\right) \\
&\ge
\frac{\lambda^2 e^{-2\sd|x|+ 2 a \kappa b t \sd}}{32 \sd^3}
\left(
\frac{\sinh (2 (1-b) \sd  \kappa  t)}{4 \sd
   \kappa }-
\frac{1}{2} (1-b) t\right)
I_0\left(\sqrt{\frac{\lambda^2(1-a^2)}{2\kappa}} b\kappa t\right).
\end{align*}
Therefore, for $\alpha >\kappa$, we obtain the same inequality as
\eqref{E4_:InitVel}.
The rest argument is exactly the same as the proof of  part (2) of
Proposition \ref{P4:InitPos}. This completes the proof of Proposition
\ref{P4:InitVel}.
\end{proof}

\section{H\"older continuity in the stochastic wave equation}
\label{S:HolerWave}

\begin{theorem}\label{T4:Holder}
Suppose that $\rho$ is Lipschitz continuous.
If $g\in L_{loc}^{2\gamma}\left(\R\right)$, $\gamma\ge 1$ and $\mu \in
\calM\left(\R\right)$, then for all compact sets $K\in\R_+\times\R$ and all $p
\ge 1$, there is a constant $C_{K,p}$ such that for all $(t,x)$, $(t',x')\in K$,
\[
\Norm{I(t,x)-I(t',x')}_p\le C_{K,p}
\left(|t-t'|^{1/(2\gamma')}+|x-x'|^{1/(2\gamma')}\right),
\]
where $\frac{1}{\gamma}+ \frac{1}{\gamma'}=1$.
Hence,
\[I(t,x)\in
C_{\frac{1}{2\gamma'}-,\frac{1}{2\gamma'}-}\left(\R_+\times\R\right)\;\text{a.s.
}
\]
In addition, for all compact sets $K\in \R_+\times\R$ and $0\le
\alpha<1/(2\gamma')-2/p$,
\[
\E\left[\left(
\mathop{\sup_{(t,x),\;(s,y)\in K}}_{(t,x)\ne (s,y)}
\frac{|I(t,x)-I(s,y)|}{\left[|t-s|+|x-y|\right]
^\alpha } \right)^p\;\right ]<+\infty.
\]
In particular, if $g$ is locally bounded ($\gamma=+\infty$), then
$I(t,x)\in
C_{\frac{1}{2} -,\frac{1}{2}-}\left(\R_+\times\R\right)$ a.s.
\end{theorem}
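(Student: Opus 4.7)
The plan is to apply Theorem \ref{T3:Holder} by verifying Assumption \ref{A:Holder} with H\"older exponents $\gamma_0 = \gamma_1 = 1/\gamma'$ on the compact sets $K_n = [0,n]\times[-n,n]$, so that Theorem \ref{T3:Holder} yields simultaneously the $L^p(\Omega)$-estimate, the almost-sure H\"older regularity on $\R_+\times\R$ (rather than only on $\R_+^*\times\R$), and the Kolmogorov modulus-of-continuity bound. Equivalently, by Lemma \ref{L3:Holder-Equiv} it suffices to establish the six integral inequalities there, with $\theta\equiv 1$ and $G=G_\kappa$.

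The decisive structural simplification is that the wave kernel is an indicator: $G_\kappa(t-s,x-y) = \tfrac{1}{2}\,1_{\Lambda(t,x)}(s,y)$, so that
\[
 \bigl(G_\kappa(t-s,x-y)-G_\kappa(t'-s,x'-y)\bigr)^2 = \tfrac{1}{4}\,1_{\Lambda(t,x)\triangle\Lambda(t',x')}(s,y),
\]
and a direct slice-wise computation on the backward cones gives, for $(t,x),(t',x')\in K_n$,
\[
 \bigl|\Lambda(t,x)\triangle\Lambda(t',x')\bigr| \le C_n\bigl(|t-t'|+|x-x'|\bigr).
\]
Thus each of the six integrals in Lemma \ref{L3:Holder-Equiv} becomes, up to the factor $1/4$, the integral of a non-negative weight over this thin symmetric-difference set.

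For the three bounds whose weight is the convolved function $(v^2+2J_0^2)\star G_\kappa^2$, Lemma \ref{L:InDt} already supplies a bound by a constant $M_n$ uniform on $K_n$, so each such integral is at most $\tfrac{M_n}{4}|\Lambda\triangle\Lambda'|\le C_n'(|t-t'|+|x-x'|)$, which is stronger than needed. The main obstacle lies in the three bounds whose weight is the raw $v^2+2J_0^2$, because $J_0$ carries the possibly unbounded term $\tfrac{1}{2}(g(y-\kappa s)+g(y+\kappa s))$. Using
\[
 J_0^2(s,y) \le g^2(y-\kappa s)+g^2(y+\kappa s)+2\bigl((\mu * G_\kappa)(s,y)\bigr)^2,
\]
the constant $v^2$ and the $\mu$-term are locally bounded on $K_n$ (the latter by $|\mu|([-(1+\kappa)n,(1+\kappa)n])^2$) and contribute $O(|t-t'|+|x-x'|)$. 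For each of the two $g$-contributions I apply H\"older with conjugate exponents $\gamma$ and $\gamma'$:
\[
 \int\!\!\!\int g^2(y\mp\kappa s)\,1_{\Lambda\triangle\Lambda'}(s,y)\,ds\,dy \le \biggl(\int\!\!\!\int g^{2\gamma}(y\mp\kappa s)\,1_{\Lambda\cup\Lambda'}(s,y)\,ds\,dy\biggr)^{\!1/\gamma}\!\bigl|\Lambda\triangle\Lambda'\bigr|^{1/\gamma'},
\]
and the change of variables $u=y\mp\kappa s$ at fixed $s$ bounds the first factor uniformly on $K_n$ by a constant multiple of $\|g\|_{L^{2\gamma}([-C_n,C_n])}^{2}$, while the second is $O((|t-t'|+|x-x'|)^{1/\gamma'})$. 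Assumption \ref{A:Holder} is thereby verified with $\gamma_0=\gamma_1=1/\gamma'$, and Theorem \ref{T3:Holder} yields the stated $L^p$-bound, the a.s. regularity on $\R_+\times\R$, and (via $|t-t'|^{1/\gamma'}+|x-x'|^{1/\gamma'}\le 2(|t-t'|+|x-x'|)^{1/\gamma'}$, which converts the metric $\tau^\alpha$ into $(|t-t'|+|x-x'|)^{\alpha/\gamma'}$) the Kolmogorov estimate on the range $\alpha<1/(2\gamma')-2/p$. The locally bounded case $\gamma=\infty$ is recovered by the same argument with the H\"older step replaced by an $L^\infty$ bound on $g^2$.
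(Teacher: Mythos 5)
Your proposal is correct and follows essentially the same route as the paper: verify Assumption \ref{A:Holder} (via Lemma \ref{L3:Holder-Equiv}) with $\gamma_0=\gamma_1=1/\gamma'$ on $K_n=[0,n]\times[-n,n]$, treat the bounded contributions ($v^2$, the $\mu$-part of $J_0$, and the convolved weight $(v^2+2J_0^2)\star G_\kappa^2$ via Lemma \ref{L:InDt}) by the Lipschitz bound on the measure of $\Lambda(t,x)\triangle\Lambda(t',x')$ (the content of Propositions \ref{P4:G}, \ref{P4:H-bdd} and \ref{P4:H-246}), and handle the $g$-part by H\"older's inequality with exponents $(\gamma,\gamma')$ as in Proposition \ref{P4:H-135}, then invoke Theorem \ref{T3:Holder}. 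The only (immaterial) variation is that you apply H\"older globally in $(s,y)$ over the symmetric-difference set, whereas the paper applies it slice-wise in $y$ for each fixed $s$; both give the exponent $1/\gamma'$.
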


\begin{proof}
We only need to verify that Assumption \ref{A:Holder} holds for
$K_n=[0,n]\times[-n,n]$. This is the case  thanks to
Propositions \ref{P4:H-bdd} -- \ref{P4:H-135} below.
More precisely,
let $J_{0,1}(t,x)$ and $J_{0,2}(t,x)$ be the homogeneous
solutions contributed respectively by $g$ and $\mu$. Clearly, when
both $g$ and $\mu$ are nonvanishing, $J_0(t,x) = J_{0,1}(t,x)+ J_{0,2}(t,x)$.
Because $J_0^2(t,x) \le 2 J_{0,1}^2(t,x)+ 2J_{0,2}^2(t,x)$,
we can consider $J_{0,1}(t,x)$ and $J_{0,2}(t,x)$ separately when verifying
Assumption \ref{A:Holder}.
In particular, Proposition \ref{P4:H-bdd} shows that the contribution of
$J_{0,2}(t,x)$ satisfies Assumption
\ref{A:Holder}, and Propositions
\ref{P4:H-246} and \ref{P4:H-135}  guarantee that the contribution of
$J_{0,1}(t,x)$ satisfies Assumption \ref{A:Holder}.
\end{proof}

\begin{proposition} \label{P4:H-Optimal}
Suppose that $\left|\rho(u)\right|^2=\lambda^2\left(\vv^2+u^2\right)$.
If $g(x)=|x|^{-a}$ with $a\in
\left[0,1/2 \right[$ and $\mu \equiv 0$, then
in the neighborhood of  the two characteristic lines $|x| = \kappa t$,
the function $I(t,x)$ mapping from $\R_+\times\R$ into $L^p(\Omega)$,
$p\ge 2$, cannot be
$\rho$-H\"older continuous either in space or in time with $\rho
>\frac{1-2a}{2}$.
\end{proposition}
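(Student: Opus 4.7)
The plan is to exhibit, for every point $(t_0,\kappa t_0)$ on the characteristic $x=\kappa t$ with $t_0>0$, families of nearby points along which the $L^2(\Omega)$-norm of the increment of $I$ behaves like $d^{(1-2a)/2}$, which contradicts any strictly larger H\"older exponent. Since $\|\cdot\|_p\ge\|\cdot\|_2$ for $p\ge 2$, it suffices to work in $L^2(\Omega)$. Linearity of the Walsh integral together with Walsh's isometry and the hypothesis $|\rho(u)|^2=\lambda^2(\vv^2+u^2)$, which via Theorem \ref{T4:ExUni}(5) gives $\|u(s,y)\|_2^2\ge J_0^2(s,y)$, yields
\[
\|I(t,x)-I(t',x')\|_2^2 \;\ge\; \lambda^2 \iint_{\R_+\times\R} \left[G_\kappa(t-s,x-y)-G_\kappa(t'-s,x'-y)\right]^2 J_0^2(s,y)\,\ud s\,\ud y.
\]

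For the spatial increment, fix $t_0>0$, set $x_0=\kappa t_0$, and take $(t',x')=(t_0,x_0+\epsilon)$ with $\epsilon>0$ small. Since $G_\kappa$ is $\tfrac12$ times the indicator of the backward light cone, the squared difference equals $\tfrac14$ on the symmetric difference of the two cones, which contains the strip $S_\epsilon:=\{(s,y):0<s\le t_0/2,\ \kappa s<y<\kappa s+\epsilon\}$ running just above the characteristic $y=\kappa s$. The non-negativity of both terms in $J_0(s,y)=\tfrac12(|y+\kappa s|^{-a}+|y-\kappa s|^{-a})$ gives $J_0^2(s,y)\ge\tfrac14|y-\kappa s|^{-2a}$, whence
\[
\iint_{S_\epsilon} J_0^2(s,y)\,\ud s\,\ud y \;\ge\; \frac{1}{4}\int_0^{t_0/2}\ud s \int_0^\epsilon u^{-2a}\,\ud u \;=\; \frac{t_0\,\epsilon^{1-2a}}{8(1-2a)}.
\]
Therefore $\|I(t_0,x_0+\epsilon)-I(t_0,x_0)\|_2\ge c_1\,\epsilon^{(1-2a)/2}$, and the H\"older ratio with exponent $\rho>(1-2a)/2$ blows up as $\epsilon\downarrow 0$.

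For the temporal increment, take $(t',x')=(t_0+h,x_0)$ with $h>0$ small. At each $s\in[0,t_0]$ the backward cone at $(t_0,x_0)$ is strictly contained in that at $(t_0+h,x_0)$ (the latter is wider by $\kappa h$ on each side), so the symmetric difference contains the strip $\{(s,y):0<s\le t_0,\ \kappa s-\kappa h<y<\kappa s\}$, again adjacent to the characteristic $y=\kappa s$. Repeating the computation yields $\|I(t_0+h,x_0)-I(t_0,x_0)\|_2\ge c_2\,h^{(1-2a)/2}$. The characteristic $x=-\kappa t$ is handled identically via the symmetry $y\mapsto -y$.

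The crux is the geometric observation that any pair of points within distance $d$ of a characteristic produces a symmetric difference of backward light cones intersecting that characteristic in a strip of width $\sim d$, on which $J_0^2$ carries the non-integrable singularity $|y-\kappa s|^{-2a}$ inherited from $g$. The main technical obstacle is the small-$\epsilon$ and small-$h$ bookkeeping needed to confirm that the strips above actually lie inside the symmetric differences of the supports of $G_\kappa$, together with the observation that Walsh's isometry provides an \emph{equality}, not merely an upper bound, so that no stochastic cancellation can eliminate this contribution.
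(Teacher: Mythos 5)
Your proposal is correct, and it reaches the conclusion by a genuinely different route than the paper. Both arguments start from the same point: the Walsh isometry at $p=2$ together with $\E[\rho(u(s,y))^2]=\lambda^2(\vv^2+\Norm{u(s,y)}_2^2)\ge \lambda^2 J_0^2(s,y)$ and $\Norm{\cdot}_p\ge\Norm{\cdot}_2$ give exactly the lower bound \eqref{E4:H-Opt-Dif} that you write down. From there the paper proceeds by exact computation: for spatial increments it expands the square, uses the cone-intersection identity \eqref{E4:GG} to turn the cross term into $(J_0^2\star G_\kappa^2)$ evaluated at $\left(T_\kappa(t,x-x'),\frac{x+x'}{2}\right)$, and then invokes the closed-form expression for $(J_0^2\star G_\kappa^2)$ in Lemma \ref{L:Special-g} to isolate the leading term $\frac{\lambda^2 t}{16(1-2a)}h^{1-2a}$; for time increments it uses the nesting of the cones (so that $G_\kappa(t,x)G_\kappa(t',x)=G_\kappa^2(t,x)$) and again the explicit formula. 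You instead bypass Lemma \ref{L:Special-g} entirely by a purely geometric localization: the symmetric difference of the two light cones contains a strip of width $\epsilon$ (resp.\ $\kappa h$) hugging the characteristic $y=\kappa s$, on which $(G_\kappa-G_\kappa)^2=\frac14$ and $J_0^2(s,y)\ge\frac14|y-\kappa s|^{-2a}$, and integrating the (integrable, since $2a<1$) singularity across the strip yields the same $d^{1-2a}$ lower bound. Your containment claims check out (e.g.\ the strip $\{0<s\le t_0/2,\ \kappa s<y<\kappa s+\epsilon\}$ lies in $\Lambda(t_0,\kappa t_0)\setminus\Lambda(t_0,\kappa t_0+\epsilon)$ once $\epsilon\le\kappa t_0$, and the cone at $(t_0+h,x_0)$ strictly contains the one at $(t_0,x_0)$). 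What each approach buys: the paper's computation produces sharp explicit constants and, as a by-product, the full formula of Lemma \ref{L:Special-g} used elsewhere; your strip argument is shorter, more robust (it would survive replacing $|x|^{-a}$ by any $g$ with a comparable one-sided singularity), and makes the geometric mechanism — the singularity of $g$ being transported along the characteristic and sampled by the symmetric difference of the cones — completely transparent.
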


This proposition is proved in Section \ref{SS4:H-Optimal}.

\begin{remark}[Optimal $L^p(\Omega)$-H\"older continuity]\label{R4:H-Optimal}
Clearly, $|x|^{-a}\in
L_{loc}^{2\gamma}\left(\R\right)$ if and only if $2 \gamma a<1$, i.e., $\gamma<
(2a)^{-1}$.
Hence, $\gamma'$,
the dual of $\gamma$, is strictly bigger than $(1-2a)^{-1}$.
Therefore, according to Theorem \ref{T4:Holder}, for all
$p\ge 2$, the function $I: \R_+\times\R \mapsto L^p(\Omega)$
is jointly $\eta$-H\"older continuous with $\eta = (1-2a)/2$.
For example, if $a=1/4$ (see Example \ref{Ex4:wave-g-1/4}), then $I$ is jointly
$1/4$-H\"older continuous in $L^p(\Omega)$.
Proposition \ref{P4:H-Optimal} then shows that $I(t,x)$ cannot be
jointly $\eta$-H\"older continuous with $\eta>1/4$. Hence, the estimates on the
joint $L^p(\Omega)$-H\"older continuity are optimal.
Singularities in the initial conditions affect the regularity of deviations from
the homogeneous solution. 
\end{remark}

\subsection{Three propositions for the H\"older continuity}
In this part, we will prove Propositions
\ref{P4:H-bdd} -- \ref{P4:H-135}, which together verify
Assumption \ref{A:Holder} (and hence the H\"older continuity).

\begin{proposition}\label{P4:G}
For $T>0$, we have that
\[\int_{\R_+} \ud s\int_\R  \ud y
\left(G_\kappa\left(t-s,x-y\right)-G_\kappa(t'-s,x'-y)\right)^2
\le  C_T \left(\left|x'-x \right| + \left|t'-t \right| \right),
\]
for all $(t,x)$ and $\left(t',x'\right)\in \:]0,T]\times\R$, with $C_T:=\left(
\kappa \vee 1\right)T/2$.
\end{proposition}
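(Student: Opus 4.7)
The plan is to exploit the simple indicator structure of the wave kernel: since $G_\kappa(t-s,x-y) = \tfrac12 \mathbf{1}_{\Lambda(t,x)}(s,y)$ with $\Lambda(t,x)=\{(s,y):0\le s\le t,\ |y-x|\le\kappa(t-s)\}$, the squared difference takes only the values $0$ and $1/4$, so
\[
\int_{\R_+}\!ds\int_{\R} dy\,\bigl(G_\kappa(t-s,x-y)-G_\kappa(t'-s,x'-y)\bigr)^2
= \tfrac14\,\bigl|\Lambda(t,x)\triangle\Lambda(t',x')\bigr|.
\]
So the whole proposition reduces to a geometric bound on the Lebesgue measure of the symmetric difference of two backward light cones.

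Without loss of generality I would take $t\le t'$ and apply the set-theoretic triangle inequality
\[
\bigl|\Lambda(t,x)\triangle\Lambda(t',x')\bigr|
\le \bigl|\Lambda(t,x)\triangle\Lambda(t',x)\bigr| + \bigl|\Lambda(t',x)\triangle\Lambda(t',x')\bigr|,
\]
splitting the deformation into a pure temporal piece and a pure spatial piece. For the temporal piece, the key observation is the monotonicity $\Lambda(t,x)\subseteq\Lambda(t',x)$ when $t\le t'$ (if $|y-x|\le\kappa(t-s)$ then a fortiori $|y-x|\le\kappa(t'-s)$), so the symmetric difference equals the difference of volumes
\[
|\Lambda(t',x)|-|\Lambda(t,x)| \;=\; \kappa(t'^{\,2}-t^{2}) \;=\; \kappa(t'-t)(t'+t) \;\le\; 2\kappa T\,|t'-t|,
\]
using $t,t'\le T$.

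For the spatial piece, the slices at height $s\in[0,t']$ are intervals of the same radius $\kappa(t'-s)$ centered at $x$ and $x'$; the symmetric difference of two intervals of equal length whose centers are shifted by $|x-x'|$ has length at most $2|x-x'|$, so integrating over $s\in[0,t']$ yields
\[
\bigl|\Lambda(t',x)\triangle\Lambda(t',x')\bigr| \;\le\; 2t'\,|x-x'| \;\le\; 2T\,|x-x'|.
\]
Adding the two pieces and dividing by $4$ gives
\[
\tfrac14\bigl|\Lambda(t,x)\triangle\Lambda(t',x')\bigr|
\le \tfrac{\kappa T}{2}|t-t'| + \tfrac{T}{2}|x-x'|
\le \tfrac{(\kappa\vee 1)T}{2}\bigl(|t-t'|+|x-x'|\bigr),
\]
which is exactly the stated bound. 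No step appears to be a real obstacle — the only mild subtlety is the cross-section bound for the spatial difference, which is a one-line calculation once one notes that shifting an interval of any length by $h$ changes it by a set of measure at most $2h$.
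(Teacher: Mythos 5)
Your argument is correct: the reduction to $\tfrac14\left|\Lambda(t,x)\triangle\Lambda(t',x')\right|$, the split into a temporal piece (where the cones are nested, giving $\kappa(t'^2-t^2)\le 2\kappa T|t'-t|$) and a spatial piece (where each horizontal slice contributes at most $2|x-x'|$), and the final constants all check out. The paper itself declares the proof elementary and omits it, and your computation is exactly the natural geometric argument one would expect, yielding precisely the stated constant $C_T=(\kappa\vee 1)T/2$.
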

The proof of this proposition is elementary.

\begin{proposition}\label{P4:H-bdd}
Denote $K_n^*:= [0,n] \times [-n-\kappa n,n+\kappa n]$. Suppose that
\begin{align}\label{E:Wlbdd}
\sup_{(t,x)\in K_n^*} J_0^2(t,x)<+\infty,\quad\text{for all $n>0$.}
\end{align}
Then Assumption \ref{A:Holder} holds under the
settings: $\theta(t,x)\equiv 1$, $d=1$,
$\gamma_0=\gamma_1=1$, and $K_n=[0,n]\times[-n,n]$.
Condition \eqref{E:Wlbdd} (and hence Assumption \eqref{A:Holder})
holds in particular when  $g\equiv 0$ and $\mu$ is a
locally
finite
Borel measure:
\[\sup_{(t,x)\in K_n^*} J_0^2(t,x)
\le 1/4\; \Psi_\mu^* \left( n+2 \kappa n\right)<+\infty.\]
\end{proposition}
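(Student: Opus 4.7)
The strategy is to verify each of the six integral estimates in Lemma \ref{L3:Holder-Equiv} with the choices $\gamma_0=\gamma_1=1$ and $K_n=[0,n]\times[-n,n]$. The starting observation is that for $(t,x),(t+h,x+z)\in K_n$ with $h>0$, the support of the wave kernel $G_\kappa$ confines the integration variable $(s,y)$ in each of the six integrands to a subset of $K_n^*=[0,n]\times[-n-\kappa n,n+\kappa n]$. Since $\theta\equiv 1$, hypothesis \eqref{E:Wlbdd} then supplies a uniform upper bound $M_n:=v^2+2\sup_{K_n^*}J_0^2<+\infty$ on the factor $v^2+2J_0^2(s,y)$ appearing in the integrands.

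Once $M_n$ is factored out, the first three estimates of Lemma \ref{L3:Holder-Equiv}---\eqref{E:H-1}, \eqref{E:H-3}, and \eqref{E:H-5}---reduce to elementary bounds on $G_\kappa$. Proposition \ref{P4:G} directly yields the required linear bounds in $h$ and in $|z|$ for \eqref{E:H-1} and \eqref{E:H-3}, respectively; and for \eqref{E:H-5} a direct computation gives $\iint_{[0,h]\times\R}G_\kappa^2(s,y)\,ds\,dy=\kappa h^2/4 \le \kappa n\,h/2$ for $h\le n$, which is $O(h)$ as required.

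For the three remaining estimates involving the extra inner double convolution $(v^2+2J_0^2)\rhd G_\kappa^2$, I would first establish that this function is itself uniformly bounded on $K_n^*$. A support check shows that on $K_n^*$ the integration variable $(s',y')$ in the definition of $(v^2+2J_0^2)\rhd G_\kappa^2$ sits inside $K_{2n}^*$, so $\bigl((v^2+2J_0^2)\rhd G_\kappa^2\bigr)(s,y)\le M_{2n}\iint G_\kappa^2(s-s',y-y')\,ds'\,dy'\le M_{2n}\,\kappa n^2/4$. With this constant bound in hand, the three remaining estimates follow by the same arguments as the first three, again by invoking Proposition \ref{P4:G} and the elementary integral of $G_\kappa^2$.

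Finally, in the particular case $g\equiv 0$ and $\mu\in\calM(\R)$, one has $J_0(t,x)=(\mu\ast G_\kappa(t,\cdot))(x)=\tfrac{1}{2}\mu([x-\kappa t,x+\kappa t])$, hence $|J_0(t,x)|\le \tfrac{1}{2}|\mu|([x-\kappa t,x+\kappa t])$. For $(t,x)\in K_n^*$, we have $|x|+\kappa t\le (n+\kappa n)+\kappa n=n+2\kappa n$, so $J_0^2(t,x)\le \tfrac{1}{4}\Psi_\mu^*(n+2\kappa n)<+\infty$ by local finiteness of $\mu$. There is no genuine obstacle in the argument; the only mildly delicate point is the support bookkeeping---in particular, having to enlarge $K_n^*$ to $K_{2n}^*$ for the bound on $(v^2+2J_0^2)\rhd G_\kappa^2$ in the second batch of three estimates.
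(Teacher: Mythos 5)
Your proposal is correct and follows essentially the same route as the paper's proof: both rest on the observation that the supports of the relevant kernel differences confine $(s,y)$ to a compact set on which $v^2+2J_0^2$ is bounded by hypothesis \eqref{E:Wlbdd}, after which Proposition \ref{P4:G} and the elementary bound $(1\star G_\kappa^2)(s,y)\le \kappa n^2/4$ finish the job; the only cosmetic difference is that you verify the six conditions of Lemma \ref{L3:Holder-Equiv} while the paper works directly with \eqref{E:H-135} and \eqref{E:H-246}. Your explicit enlargement to $K_{2n}^*$ for the inner convolution is in fact slightly more careful bookkeeping than the paper's reuse of the constant $C_n$.
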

\begin{proof}
Fix $v\ge 0$, $n>1$ and choose arbitrary $(t,x)$ and $(t',x')\in
K_n=[0,n]\times[-n,n]$ (note that the time variable can be
zero).
Because the support of the function $\left(s,y\right)\mapsto
G_\kappa\left(t-s,x-y\right)-G_\kappa\left(t'-s,x'-y\right)$
is included in the compact set $K_n^*$, by Proposition \ref{P4:G}, the l.h.s. of
\eqref{E:H-135} is bounded by, 
\begin{align*}
C_n
\iint_{\R_+\times\R} \ud s\ud y\:
\left(G_\kappa\left(t-s,x-y\right)-G_\kappa\left(t'-s,x'-y\right)\right)^2
\le C_n \frac{n\left(\kappa\vee 1\right)}{2}  \left(\left|x-x'\right|+
\left|t-t'\right|\right),
\end{align*}
where
$C_n=\sup_{\left(s,y\right)\in K_n^*}
\left(v^2+2J_0^2\left(s,y\right)\right)$.
As for \eqref{E:H-246}, using the same constant $C_n$, the l.h.s. of
\eqref{E:H-246} is bounded by
\begin{multline*}
 C_n
\iint_{\R_+\times\R}\ud s\ud y  \left[\iint_{\R_+\times\R}\ud u\ud z\:
G_\kappa^2(s-u,y-z)\right]
\left(G_\kappa\left(t-s,x-y\right)-G_\kappa\left(t'-s,x'-y\right)\right)^2
\\
\le
\frac{C_n\kappa n^2}{4}
\iint_{\R_+\times\R}\ud s\ud y\:
(G_\kappa\left(t-s,x-y\right) -G_\kappa\left(t'-s,x'-y\right))^2.
\end{multline*}
Then apply Proposition \ref{P4:G} as before.
\end{proof}

\begin{proposition}\label{P4:H-246}
Suppose $\mu\equiv 0$ and $g\in L_{loc}^2\left(\R\right)$. Then
\eqref{E:H-246} holds
with $\theta(t,x)\equiv 1$, $d=1$, $\gamma_0=\gamma_1=1$, and
$K_n=[0,n]\times[-n,n]$.
\end{proposition}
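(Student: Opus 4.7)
The plan is to reduce \eqref{E:H-246} to a pointwise bound on the weight $\left((v^2+2J_0^2)\star G_\kappa^2\right)(s,y)$ over a suitable compact enlargement of $K_n$, and then to apply the $L^2$-estimate on the increments of $G_\kappa$ already established in Proposition \ref{P4:G}. Throughout, fix $v\ge 0$, $n>1$, and $(t,x),(t',x')\in K_n=[0,n]\times[-n,n]$. Since $G_\kappa(t-s,x-y)$ is supported in $\Lambda(t,x)$, the support of the map
\[
(s,y)\mapsto \bigl(G_\kappa(t-s,x-y)-G_\kappa(t'-s,x'-y)\bigr)^2
\]
is contained in $K_n^*:=[0,n]\times[-n-\kappa n,\,n+\kappa n]$, exactly the set used in Proposition \ref{P4:H-bdd}.

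Next I would bound the weight on $K_n^*$. Because $\mu\equiv 0$, Lemma \ref{L:InDt} (with $\Psi_\mu^*\equiv 0$) gives, for every $(s,y)\in\R_+\times\R$,
\[
\bigl((v^2+J_0^2)\star G_\kappa^2\bigr)(s,y)\;\le\;\frac{\kappa s^2 v^2}{4}+\frac{3}{16}\,s\,\Psi_g\bigl(|y|+\kappa s\bigr),
\]
and hence, using $v^2+2J_0^2\le 2(v^2+J_0^2)$,
\[
\bigl((v^2+2J_0^2)\star G_\kappa^2\bigr)(s,y)\;\le\;\frac{\kappa s^2 v^2}{2}+\frac{3 s}{8}\,\Psi_g\bigl(|y|+\kappa s\bigr).
\]
For $(s,y)\in K_n^*$ we have $|y|+\kappa s\le n+2\kappa n$, and since $g\in L^2_{loc}(\R)$, $\Psi_g(n+2\kappa n)<+\infty$. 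Therefore
\[
M_n\;:=\;\sup_{(s,y)\in K_n^*}\bigl((v^2+2J_0^2)\star G_\kappa^2\bigr)(s,y)\;<\;+\infty.
\]

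Combining the two ingredients, the left-hand side of \eqref{E:H-246} is bounded by
\[
M_n\iint_{\R_+\times\R}\ud s\ud y\,\bigl(G_\kappa(t-s,x-y)-G_\kappa(t'-s,x'-y)\bigr)^2\;\le\;M_n\,C_n'\bigl(|t-t'|+|x-x'|\bigr),
\]
where $C_n'=(\kappa\vee 1)n/2$ by Proposition \ref{P4:G}. Setting $C_n:=M_n C_n'$ gives the required bound with $\gamma_0=\gamma_1=1$. No step is especially delicate here; the only point requiring a bit of care is the reduction to the compact set $K_n^*$ so that local integrability of $g$ translates into a finite uniform bound on the weight—this is exactly what Lemma \ref{L:InDt} is designed to do.
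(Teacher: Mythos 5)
Your proof is correct and follows essentially the same route as the paper: reduce to a uniform bound on the weight $\left(\left(v^2+2J_0^2\right)\star G_\kappa^2\right)$ over the enlarged compact set $K_n^*$ and then invoke Proposition \ref{P4:G}. The only cosmetic difference is that the paper re-derives the bound on $\left(J_0^2\star G_\kappa^2\right)$ by the change of variables of Figure \ref{F4:ChangeVar} and then appeals to (the argument of) Proposition \ref{P4:H-bdd}, whereas you obtain the same bound by citing Lemma \ref{L:InDt} directly, which is a perfectly valid shortcut since that lemma encapsulates the identical computation.
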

\begin{proof}
Split \eqref{E:H-246} into two parts by
linearity: one term is contributed
by $v^2$ and the other by $2J_0^2$. Proposition \ref{P4:H-bdd} shows that the
first term satisfies Assumption \ref{A:Holder}.
Hence, we only need to consider the second term.
Let $K_n^*=[0,n]\times \left[-(1+\kappa)n,(1+\kappa)n\right]$.
By a change of variables (see Figure \ref{F4:ChangeVar}),
for all $(t,x)\in K_n^*$,
\[
\left(J_0^2\star G_\kappa^2\right)(t,x)
= \frac{1}{16} \frac{1}{2\kappa} \iint_{I \cup II \cup III}\ud u \ud w\:
\left(g(w)+g(u)\right)^2
\le \frac{( 1+\kappa) n}{4\kappa}
\Psi_{g}(n+n\kappa),
\]
where $I$, $II$ and $III$ denote the three domains shown in Figure
\ref{F4:ChangeVar}.
Therefore, this proposition is proved by applying Proposition
\ref{P4:H-bdd}.
\end{proof}

\begin{proposition}\label{P4:H-135}
 Suppose $\mu\equiv 0$, $g\in L_{loc}^{2\gamma}\left(\R\right)$ with $\gamma\ge
1$, and
$1/\gamma+1/\gamma'=1$. Then \eqref{E:H-135} holds
with $\theta(t,x)\equiv 1$, $d=1$, and $\gamma_0=\gamma_1=1/\gamma'$.
\end{proposition}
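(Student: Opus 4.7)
The plan is to split the integrand in \eqref{E:H-135} by linearity into the $v^2$ piece and the $2J_0^2$ piece. The former is immediately bounded by $v^2 C_n(|t-t'|+|x-x'|)$ via Proposition \ref{P4:G}, and since the diameter of $K_n$ is bounded and $1/\gamma'\in[0,1]$, this dominates $C_n'(|t-t'|^{1/\gamma'}+|x-x'|^{1/\gamma'})$. Hence the only real work is to control
\[
A(t,x,t',x'):=\iint_{\R_+\times\R} J_0^2(s,y)\bigl(\Delta G\bigr)^{2}\ud s\ud y,\qquad \Delta G:= G_\kappa(t-s,x-y)-G_\kappa(t'-s,x'-y).
\]

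The decisive observation is that the wave kernel is step-valued, $G_\kappa\in\{0,1/2\}$, so $\Delta G\in\{-1/2,0,1/2\}$ and one has the pointwise identity $(\Delta G)^{2\gamma'}=2^{2-2\gamma'}(\Delta G)^2$ for every $\gamma'\ge 1$. Since $(\Delta G)^2$ is supported in the bounded set $K_n^*:=[0,n]\times[-(1+\kappa)n,(1+\kappa)n]$, Hölder's inequality with conjugate exponents $\gamma,\gamma'$ yields
\[
A(t,x,t',x')\le \Bigl(\iint_{K_n^*} J_0^{2\gamma}\ud s\ud y\Bigr)^{1/\gamma}\Bigl(2^{2-2\gamma'}\iint_{\R_+\times\R}(\Delta G)^{2}\ud s\ud y\Bigr)^{1/\gamma'},
\]
and the second factor is bounded by a constant times $(|t-t'|+|x-x'|)^{1/\gamma'}$ by a second invocation of Proposition \ref{P4:G}.

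To dispose of the first factor, I would use that with $\mu\equiv 0$ one has $J_0(s,y)=\tfrac12(g(y-\kappa s)+g(y+\kappa s))$ and hence $J_0^{2\gamma}(s,y)\le \tfrac12(|g(y-\kappa s)|^{2\gamma}+|g(y+\kappa s)|^{2\gamma})$ by convexity (valid since $2\gamma\ge 2$). The change of variables $u=y-\kappa s$, $w=y+\kappa s$ (Jacobian $1/(2\kappa)$), as depicted in Figure \ref{F4:ChangeVar}, converts the double integral over $K_n^*$ into a product of one-dimensional integrals of $|g|^{2\gamma}$ over bounded intervals of $\R$, which are finite because $g\in L^{2\gamma}_{loc}(\R)$. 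Combining the two factors and invoking $(a+b)^{1/\gamma'}\le a^{1/\gamma'}+b^{1/\gamma'}$ (valid for $1/\gamma'\in(0,1]$) yields the required bound with $\gamma_0=\gamma_1=1/\gamma'$.

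The only genuine idea here is the step-function identity that collapses $(\Delta G)^{2\gamma'}$ to a constant multiple of $(\Delta G)^2$; without it, the Hölder step would leave us with $\iint(\Delta G)^{2\gamma'}\ud s\ud y$, which on its face does not inherit the linear $(|t-t'|+|x-x'|)$ control from Proposition \ref{P4:G}. The boundary case $\gamma=1$ (so $\gamma'=\infty$, $1/\gamma'=0$) needs no separate treatment: then $\tau_{0,0}((t,x),(t',x'))\ge 1$ for $(t,x)\ne(t',x')$, whereas the left-hand side of \eqref{E:H-135} is uniformly bounded on $K_n$ by $\tfrac14\iint_{K_n^*}(v^2+2J_0^2)\ud s\ud y<\infty$, which is precisely the $L^1_{loc}$ bound used in the proof of Proposition \ref{P4:H-246}.
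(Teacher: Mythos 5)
Your proof is correct, and it reaches \eqref{E:H-135} by a route that is organized differently from the paper's. The paper passes to the equivalent conditions \eqref{E:H-1}, \eqref{E:H-3} and \eqref{E:H-5} of Lemma \ref{L3:Holder-Equiv} and treats time increments, space increments and the boundary term as three separate estimates; in each case it applies H\"older's inequality with exponents $(\gamma,\gamma')$ in the $y$-variable only, for each fixed $s$, and then computes $\int_\R \left|G_\kappa(t-s,x-y)-G_\kappa(t'-s,x'-y)\right|^{2\gamma'}\ud y$ explicitly from the geometry of the light cone (it is $2^{-2\gamma'}$ times the measure of a symmetric difference of intervals). You instead work with \eqref{E:H-135} directly, apply H\"older jointly in $(s,y)$, and collapse the resulting $2\gamma'$-power of the kernel difference back to its square via the pointwise identity $(\Delta G)^{2\gamma'}=2^{2-2\gamma'}(\Delta G)^2$, so that the single $L^2$-estimate of Proposition \ref{P4:G}, together with subadditivity of $x\mapsto x^{1/\gamma'}$, does all the measure-theoretic work. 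The structural fact underlying both arguments is identical --- the wave kernel difference takes values in $\{-1/2,0,1/2\}$ --- but your packaging is more economical: one H\"older application and one invocation of Proposition \ref{P4:G} replace three case-by-case computations of $\ud y$-integrals. Your handling of the factor $\bigl(\iint_{K_n^*}J_0^{2\gamma}\bigr)^{1/\gamma}$ (convexity of $x\mapsto |x|^{2\gamma}$ plus the change of variables of Figure \ref{F4:ChangeVar}) coincides with the paper's, which bounds the same quantity by $\Psi_{g^\gamma}(n+2\kappa n)$; and your explicit remark on the degenerate case $\gamma=1$, $\gamma'=\infty$ addresses a point the paper leaves implicit.
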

\begin{proof}
Equivalently, we shall show that
\eqref{E:H-1}--\eqref{E:H-5} hold under the same settings.
As explained in the proof of Proposition \ref{P4:H-246}, we can assume that
$v=0$ in \eqref{E:H-1}--\eqref{E:H-5}.
Fix $n>0$, $(t,x)$ and $(t',x')\in K_n=[0,n]\times[-n,n]$ with $t\le t'$.
We first prove \eqref{E:H-1}. Because the
support of the function $G_\kappa-G_\kappa$ is in
$K_n^*=[0,n]\times[-(1+\kappa)n,(1+\kappa)n]$, by H\"older's inequality,
\begin{align*}
I&:= \int_0^t \ud s \int_\R\ud y\:
J_0^2\left(s,y\right)\left(G_\kappa\left(t-s,x-y\right)-G_\kappa(t'-s,
x-y)\right)^2  \\
&\le \int_0^t \ud s \left(\int_{-(1+\kappa)n}^{(1+\kappa)n}\ud y\:
J_0^{2\gamma}\left(s,y\right) \right)^{1/\gamma}\left(
\int_\R\ud y
\left|G_\kappa\left(t-s,x-y\right)-G_\kappa(t'-s,x-y)\right|^{2\gamma'}
\right)^{1/\gamma'}.
\end{align*}
By convexity of $x\mapsto |x|^{2\gamma}$,
\[
\int_{-(1+\kappa)n}^{(1+\kappa)n}\ud y\:
J_0^{2\gamma}\left(s,y\right)   \le
\frac{1}{2} \int_{-(1+\kappa)n}^{(1+\kappa)n}\ud y\:
\left( g^{2\gamma}(y+\kappa s)+ g^{2\gamma}(y-\kappa s) \right)
\le \Psi_{g^\gamma}(n+2\kappa n).
\]
Hence,
\[I\le \Psi_{g^\gamma}^{\frac{1}{\gamma}}(n+2\kappa n)
\int_0^t \ud s \left(
\int_\R\ud y
\left|G_\kappa\left(t-s,x-y\right)-G_\kappa(t'-s,x-y)\right|^{2\gamma'}
\right)^{1/\gamma'},\]
where
\[\int_\R\ud y\:
\left|G_\kappa\left(t-s,x-y\right)-G_\kappa(t'-s,x-y)\right|^{2\gamma'}
=
2^{-2\gamma'} \kappa n \left|t'-t \right|.\]
Therefore, \[I\le\frac{\kappa^{1/\gamma'} n^{1+1/\gamma'}}{4}
\Psi_{g^\gamma}^{\frac{1}{\gamma}}(n+2\kappa n) \left|t'-t
\right|^{1/\gamma'},\] which proves
\eqref{E:H-1}.

Now let us consider \eqref{E:H-3}. As above, we can assume that $v=0$, so we set
\begin{align*}
I&:= \int_0^t \ud s \int_\R \ud y\:
J_0^2\left(s,y\right)(G_\kappa\left(t-s,x-y\right)-G_\kappa(t-s,
x'-y))^2 \\
& \le
\Psi_{g^\gamma}^{\frac{1}{\gamma}}(n+2\kappa n)
\int_0^t \ud s \left(
\int_\R  \ud y\:
\left|G_\kappa\left(t-s,x-y\right)-G_\kappa(t-s,x'-y)\right|^{2\gamma'}
\right)^{1/\gamma'},
\end{align*}
where (see Figure \ref{F4:WaveKernel}),
\begin{multline*}
\int_\R\ud y \:
\left|G_\kappa\left(t-s,x-y\right)-G_\kappa(t-s,x'-y)\right|^{ 2\gamma' }
\\ =
2^{1-2\gamma'}\left|x'-x \right| \: \Indt{\left|x'-x \right|\le 2\kappa (t-s)}
+
2^{1-2\gamma'} \kappa (t-s)
\: \Indt{\left|x'-x \right|> 2\kappa (t-s)}
\le
2^{1-2\gamma'}\left|x'-x \right|.
\end{multline*}
Therefore,
\[I\le 2^{-2+1/\gamma'} n\:   \Psi_{g^\gamma}^{\frac{1}{\gamma}}(n+2\kappa n) \:
\left|x'-x
\right|^{1/\gamma'},\]
which proves \eqref{E:H-3}.

Now let us consider \eqref{E:H-5}. By the same
arguments as above, we only consider
\begin{align*}
I  &:= \int_t^{t'} \ud s \int_\R\ud y\:
J_0^2\left(s,y\right)G_\kappa^2(t'-s,x'-y)\\
&\:\le
\Psi_{g^\gamma}^{\frac{1}{\gamma}}(n+2\kappa n)
\int_t^{t'} \ud s \left(
\int_\R\ud y\: G_\kappa^{2\gamma'}(t'-s,x'-y)
\right)^{1/\gamma'},
\end{align*}
where
\[\int_\R \ud y\: G_\kappa^{2\gamma'}(t'-s,x'-y)
= 2^{-2\gamma'} 2 \kappa (t'-s) \le 2^{-2\gamma'} 2 \kappa n.\]
Therefore,
\[I\le 2^{-2+1/\gamma'} (n \kappa)^{1/\gamma'}
 \Psi_{g^\gamma}^{\frac{1}{\gamma}}(n+2\kappa n) \left|t'-t \right|.\]
Finally, \eqref{E:H-5} follows from the bound $\left|t'-t \right| \le
n^{1/\gamma} \left|t'-t \right|^{1/\gamma'}$.
\end{proof}

\subsection{Optimality of the H\"older exponents (proof of Proposition
\ref{P4:H-Optimal})}
\label{SS4:H-Optimal}
\begin{lemma}\label{L:Special-g}
If $g(x)=|x|^{-a}$ with $a\in
\left[0,1/2 \right[$ and $\mu \equiv 0$, then
\[
\left(J_0^2\star G_\kappa^2\right)(t,x)
=
\begin{cases}
\frac{a^2-4a+2}{32\kappa (1-2a)(1-a)^2} \left|\kappa t-x\right|^{2(1-a)},&
\text{if $x< -\kappa t$,}\cr
\frac{1}{32\kappa (1-a)^2} \left[\left(\kappa t-x\right)^{1-a}+\left(\kappa
t+x\right)^{1-a}\right]^2 & \cr
\quad+
\frac{t}{16(1-2a)}\left[\left(\kappa t-x\right)^{1-2a}
+\left(\kappa t+x\right)^{1-2a}\right],& \text{if $|x|\le \kappa
t$,}\cr
\frac{a^2-4a+2}{32\kappa (1-2a)(1-a)^2} \left|\kappa t+x\right|^{2(1-a)},&
\text{if $x> \kappa t$,}
\end{cases}
\]
where $J_0(t,x)= \left( g\left(x-\kappa t\right) + g\left(x+\kappa
t\right)\right)/2$.
\end{lemma}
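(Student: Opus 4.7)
The plan is to reduce $(J_0^2\star G_\kappa^2)(t,x)$ to an elementary double integral via the light-cone change of variables $u=\kappa s-y$, $w=\kappa s+y$ (Jacobian $1/(2\kappa)$). Since $G_\kappa(t-s,x-y)=\tfrac{1}{2}\mathbb{1}_{\Lambda(t,x)}(s,y)$ one has $G_\kappa^2=\tfrac14\mathbb{1}_\Lambda$; since $y+\kappa s=w$ and $y-\kappa s=-u$, the homogeneous solution becomes $J_0(s,y)=\tfrac12(|w|^{-a}+|u|^{-a})$. A direct inspection of the inequalities defining $\Lambda(t,x)$ shows that its image is the (possibly degenerate) triangle
\[
D(t,x)=\{(u,w):u+w\ge 0,\ u\le A,\ w\le B\},\qquad A:=\kappa t-x,\ B:=\kappa t+x,
\]
so that
\[
(J_0^2\star G_\kappa^2)(t,x)=\frac{1}{32\kappa}\iint_{D(t,x)}\bigl(|w|^{-a}+|u|^{-a}\bigr)^2\,du\,dw.
\]

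The next step is to split according to the signs of $A$ and $B$. For $|x|\le\kappa t$ (both $A,B\ge 0$), $D$ decomposes as in Figure \ref{F4:ChangeVar} into the rectangle $[0,A]\times[0,B]$ and the two triangles where exactly one of $u,w$ is negative. For $x<-\kappa t$, $B<0$ forces $u\ge-w>0$, and the substitution $w'=-w$, $\beta:=-B$ turns $D$ into the right triangle $\{\beta\le w'\le u\le A\}$ in the first quadrant. The case $x>\kappa t$ is symmetric under $u\leftrightarrow w$, $A\leftrightarrow B$. On every subregion the signs of $u$ and $w$ are fixed, so $(|w|^{-a}+|u|^{-a})^2$ expands into a polynomial in $u^{\pm a},w^{\pm a}$ that is integrated by iterated one-dimensional antiderivatives of $\xi^{-2a},\xi^{-a},\xi^{1-2a},\xi^{1-a},\xi$, all of which are convergent because $a\in[0,1/2)$.

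The last step is purely algebraic. The identity
\[
\frac{1}{(1-a)^2}+\frac{1}{1-2a}=\frac{a^2-4a+2}{(1-a)^2(1-2a)}
\]
is the source of the two coefficient structures in the lemma. In the interior case, summing contributions from the three pieces and using $A+B=2\kappa t$ regroups the $A^{2-2a},B^{2-2a},A^{1-a}B^{1-a},A^{1-2a}B,AB^{1-2a}$ monomials into the compact combinations $[(\kappa t-x)^{1-a}+(\kappa t+x)^{1-a}]^2$ and $(A+B)[A^{1-2a}+B^{1-2a}]=2\kappa t[(\kappa t-x)^{1-2a}+(\kappa t+x)^{1-2a}]$. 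In the exterior cases, the single triangular region and the identity $A-\beta=2\kappa t$ (resp.\ $B-|A|=2\kappa t$) collapse the analogous monomials to a single multiple of $|\kappa t\mp x|^{2(1-a)}$.

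The only real obstacle is the combinatorial bookkeeping: keeping signs straight on each subregion in the interior case, and verifying that the elementary integrals combine into the stated two-term structure. Useful internal consistency checks are matching the interior formula at $B\to 0^+$ with the exterior formula as $x\downarrow-\kappa t$, and checking that the expression vanishes at $t=0$; both follow from $A^{2-2a}+(-B)^{2-2a}$ terms conspiring correctly with the cross terms.
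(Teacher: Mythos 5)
Your reduction and your treatment of the interior case $|x|\le\kappa t$ coincide with the paper's own proof: the same light-cone change of variables with Jacobian $1/(2\kappa)$, the same three-region decomposition of Figure \ref{F4:ChangeVar}, the same elementary antiderivatives, and the same identity $\frac{1}{(1-a)^2}+\frac{1}{1-2a}=\frac{a^2-4a+2}{(1-a)^2(1-2a)}$; the regrouping via $A+B=2\kappa t$ that you describe is exactly how the paper sums $S_1+S_2+S_3$.

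The gap is in the exterior cases. You correctly identify the image of $\Lambda(t,x)$ for $x<-\kappa t$ as the triangle $\{\beta\le w'\le u\le A\}$ with $\beta=-(\kappa t+x)>0$, but the assertion that the resulting integral ``collapses to a single multiple of $|\kappa t-x|^{2(1-a)}$'' is false: the integral $\int_\beta^A \ud v\int_v^A \ud u\,(u^{-a}+v^{-a})^2$ depends on $\beta$ as well as on $A$, and reduces to the $S_1$-type expression only when $\beta=0$, i.e.\ on the boundary $x=-\kappa t$. A decisive test is $a=0$: then $J_0\equiv 1$ a.e., so $(J_0^2\star G_\kappa^2)(t,x)=\frac14\,\kappa t^2$ for every $x$, whereas the claimed exterior expression equals $(\kappa t-x)^2/(16\kappa)$, which exceeds $\kappa t^2/4$ for $x<-\kappa t$. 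So the final ``purely algebraic'' step you invoke cannot be carried out — and in fact the displayed formula for $|x|>\kappa t$ is itself incorrect (the paper's own proof dismisses these cases with ``can be calculated similarly to $S_1$ and $S_2$,'' which in effect drops the constraint $w'\ge\beta$). Your consistency check at $x\to-\kappa t$ passes precisely because $\beta\to 0$ there, so it cannot detect the problem. Note that the lemma is only applied, in the proof of Proposition \ref{P4:H-Optimal}, at points with $|x|\le\kappa t$, so only the interior case — which you and the paper both establish correctly — is actually used.
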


\begin{proof}
First assume that $|x|\le \kappa t$. Then
\[\left(J_0^2\star G_\kappa^2\right)(t,x) = \frac{1}{16}\int_0^t \ud s
\int_{x-\kappa(t-s)}^{x+\kappa (t-s)} \ud y\:(g(y-\kappa s) +
g(y+\kappa
s) )^2  = \frac{1}{16}\left(S_1 + S_2+S_3\right),\]
where $S_1$, $S_2$ and $S_3$ correspond to the integrations in the regions I,
II and III shown in Figure \ref{F4:ChangeVar}.
To evaluate these three integrals, by change the variables (see Figure
\ref{F4:ChangeVar}),
\begin{gather*}
 S_1 =  \frac{1}{2\kappa}\int_{x-\kappa t}^0 \ud w
\int_{-w}^{-x+\kappa t}\ud u
\left(\left|u\right|^{-a}+\left|w\right|^{-a}\right)^2=
\frac{a^2-4a+2}{2\kappa(1-2a)(1-a)^2} \left(\kappa t-x\right)^{2(1-a)},\\
S_2=  \frac{1}{2\kappa}\int_0^{x+\kappa t}\ud w \int_{-w}^0
\ud u\left(\left|u\right|^{-a}+\left|w\right|^{-a}\right)^2=
\frac{a^2-4a+2}{2\kappa(1-2a)(1-a)^2} \left(\kappa t+x\right)^{2(1-a)},\\
S_3= \frac{1}{\kappa (1-a)^2}\left(\kappa^2 t^2-x^2\right)^{1-a} +
\frac{1}{2\kappa (1-2a)}\left(
\left(\kappa t-x\right)^{1-2a}\left(\kappa t +x\right)+\left(\kappa
t+x\right)^{1-2a}\left(\kappa t-x\right)
\right).
\end{gather*}
Use the fact that
\[\frac{a^2-4a+2}{2\kappa(1-2a)(1-a)^2} =
\frac{(1-2a)+(1-a)^2}{2\kappa(1-2a)(1-a)^2}
=\frac{1}{2\kappa (1-a)^2} + \frac{1}{2\kappa (1-2a)}\]
to sum up these $S_i$.
The other two cases,
$x<-\kappa t$ and $x>\kappa t$, can be calculated similarly to $S_1$ and $S_2$
respectively.
\end{proof}

\begin{proof}[Proof of Proposition \ref{P4:H-Optimal}]
Let $I(t,x)$ be the stochastic integral part of random field solution, i.e.,
$u(t,x)=J_0(t,x)+I(t,x)$. For $(t,x)$ and $(t',x')\in\R_+\times\R$, because
\[
\vv^2+\Norm{u\left(s,y\right)}_2^2 \ge J_0^2\left(s,y\right),\quad\text{and}
\quad
\Norm{I(t,x)-I(t',x')}_p^2 \ge
\Norm{I(t,x)-I(t',x')}_2^2
\]
for $p\ge 2$, we see that
\begin{multline}
\Norm{I(t,x)-I(t',x')}_p^2
\\
\ge\lambda^2 \iint_{\R_+\times\R}\ud s \ud y\:
\left(G_\kappa\left(t-s,x-y\right)-G_\kappa(t'-s,x'-y)\right)^2
J_0^2\left(s,y\right).\quad
\label{E4:H-Opt-Dif}
\end{multline}

{\noindent\em Spatial increments.}
Fix $t=t'>0$, $x$ and $x'\in\R$. Denote $T=T_\kappa(t,x-x')$.
By \eqref{E4:GG}, the lower bound in \eqref{E4:H-Opt-Dif} reduces to
\[\lambda^2 \iint_{\R_+\times\R}\ud s \ud y\:
J_0^2\left(s,y\right)
(
G_\kappa^2\left(t-s,x-y\right) -
2 G_\kappa^2\left(T-s,\frac{x+x'}{2}-y\right)+
G_\kappa^2(t-s,x'-y)
),\] which is denoted by $\lambda^2 L(t,x,x')$. Then
\[L(t,x,x')
=
\left(J_0^2 \star G_\kappa^2\right)(t,x)
+\left(J_0^2 \star G_\kappa^2\right)(t,x')
-2 \left(J_0^2 \star
G_\kappa^2\right)\left(T,\frac{x+x'}{2}\right).\]
Let $x=\kappa t$ and $x'<x$ be such that $|x'-x|\le 2\kappa t$. Hence,
$T_\kappa(t,x-x') = t-(x-x')/(2\kappa)$.
Then apply Lemma \ref{L:Special-g} to see that
\[L(t,\kappa t,x')
= \frac{1}{32\kappa(1-a)^2}
L_1(t,x')
+ \frac{t}{16(1-2a)} L_2(t,x'),\]
with
\begin{align*}
 L_1(t,x') &= (2\kappa t)^{2(1-a)} + \left[\left(\kappa
t-x'\right)^{1-a}+\left(\kappa
t+x'\right)^{1-a}\right]^2 -
2 \left(\kappa
t+x'\right)^{2(1-a)},\\
L_2(t,x') &=
(2\kappa t)^{1-2a} + (\kappa t-x')^{1-2a} - (\kappa
t+x')^{1-2a}.
\end{align*}
Let $h=\kappa t-x'$. Then
\begin{align*}
L_1(t,x') &= (2\kappa t)^{2(1-a)} + \left[h^{1-a}+(2\kappa t-h)^{1-a}\right]^2
-2 \left(2\kappa t -h\right)^{2(1-a)}
\ge h^{2(1-a)},\\
L_2(t,x') &= (2\kappa t)^{1-2a} + h^{1-2a}- \left(2\kappa t -h\right)^{1-2a}
\ge h^{1-2a}.
\end{align*}
Since $1-2a  \in \:\left]0,1\right]$ and $2(1-a) \in \;\left]1,2\right]$, by
discarding $L_1(t,x')$, we have that
\[\Norm{I(t,\kappa t)-I(t,\kappa t-h)}_p^2
= \lambda^2 L(t,\kappa t,x') \ge \frac{\lambda^2 t}{16(1-2a)}
h^{1-2a}.\]

{\noindent\em Time increments.}
Now fix $x=x'\in\R$. By symmetry, we assume that  $x>0$ . For $t'\ge t\ge 0$,
 \eqref{E4:H-Opt-Dif} implies that
\[\Norm{I(t,x)-I(t',x)}_p^2\ge
\lambda^2
\left(
\left(J_0^2 \star G_\kappa^2\right)(t',x)
-\left(J_0^2 \star G_\kappa^2\right)(t,x)
\right),\]
because $G_\kappa(t,x)
G_\kappa(t',x) = G_\kappa^2(t,x)$.
Take $t=x/\kappa$ and $h=t'-t = t'-x/\kappa$. Similarly to the
previous case,
\[
\left(J_0^2 \star G_\kappa^2\right)\left(\frac{x}{\kappa},x\right) =
\frac{1}{32\kappa(1-a)^2}\left(2x\right)^{2(1-a)}
+ \frac{x}{16\kappa (1-2a)} \left(2x\right)^{1-2a},
\]
and $\left(J_0^2 \star
G_\kappa^2\right)\left(t',x\right)$ is equal to
\[ \frac{1}{32\kappa (1-a)^2}\left[
\left(\kappa h\right)^{1-a}+\left(\kappa
h+2x\right)^{1-a}\right]^2
+\frac{x}{16\kappa(1-2a)} \left[\left(\kappa h\right)^{1-2a}+\left(\kappa
h+2x\right)^{1-2a}\right].\]
Hence, by symmetry, for all $x\in\R$, and $h=t'-|x|/ \kappa>0$,
\[\Norm{I\left(\frac{|x|}{\kappa},x\right)-I(t',x)}_p^2 \ge
\frac{\lambda^2 |x| }{16 \kappa^{2a} (1-2a)}
h^{1-2a}.\]
Therefore, Proposition \ref{P4:H-Optimal} is proved.
\end{proof}

\appendix
\section{Some technical lemmas}
\begin{lemma}\label{L:IntShCh}
For $a\ne 0$ and $t\ge 0$,
$\int_0^t\ud s\: \cosh(a s) (t-s) = a^{-2}\left(\cosh(a
t)-1\right)$,
$\int_0^t \ud s\: \sinh(a s) (t-s) = a^{-2}\left(\sinh(a t)-a t\right)$,
and
$\int_0^t \ud s\: \sinh(a s) (t-s)^2 = a^{-3}\left(2\cosh(a t)-a^2
t^2-2\right)$.
\end{lemma}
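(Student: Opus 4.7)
Each of the three identities is a standard integration-by-parts exercise, and all three can be obtained in sequence from the single antiderivative pair $(\sinh(as)/a)' = \cosh(as)$, $(\cosh(as)/a)' = \sinh(as)$. The plan is to handle them in the order listed, using the later ones to recycle the earlier computations so as to minimize work.

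For the first identity, I would integrate by parts with $u = t-s$, $dv = \cosh(as)\,ds$, so $du = -ds$ and $v = \sinh(as)/a$. The boundary term $[(t-s)\sinh(as)/a]_0^t$ vanishes at both endpoints, leaving $\int_0^t \sinh(as)/a\,ds = a^{-2}(\cosh(at)-1)$. For the second identity, the same choice of $u = t-s$ with $dv = \sinh(as)\,ds$, $v = \cosh(as)/a$, gives boundary contribution $-t/a$ at $s=0$ (since $(t-t)\cosh(at)/a = 0$), plus $\int_0^t \cosh(as)/a\,ds = \sinh(at)/a^2$, yielding $a^{-2}(\sinh(at)-at)$.

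For the third identity, I would again integrate by parts with $u = (t-s)^2$, $dv = \sinh(as)\,ds$, so $du = -2(t-s)\,ds$ and $v = \cosh(as)/a$. The boundary term evaluates to $-t^2/a$, and the remaining integral is $(2/a)\int_0^t \cosh(as)(t-s)\,ds$, which was already evaluated in the first identity to be $a^{-2}(\cosh(at)-1)$. Combining, one gets $-t^2/a + 2a^{-3}(\cosh(at)-1) = a^{-3}(2\cosh(at) - a^2t^2 - 2)$.

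There is no genuine obstacle; the only care required is tracking signs from the $du = -ds$ term and correctly evaluating the boundary $[(t-s)^k\,v(s)]_0^t$ (which vanishes at $s=t$ for $k\ge 1$ and contributes $-t^k v(0)$ at $s=0$). As a sanity check one may verify the third formula by differentiating: $\tfrac{d}{dt}[a^{-3}(2\cosh(at)-a^2t^2-2)] = 2a^{-2}(\sinh(at)-at)$, which is indeed twice the right-hand side of the second identity, consistent with $\tfrac{d}{dt}\int_0^t \sinh(as)(t-s)^2\,ds = 2\int_0^t \sinh(as)(t-s)\,ds$.
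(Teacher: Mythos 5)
Your computation is correct: all three integrations by parts are carried out with the right signs and boundary terms, and the third identity correctly recycles the first. The paper states this lemma in the appendix without proof, treating it as elementary, and your argument is exactly the standard one that is being implicitly invoked.
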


\begin{lemma} \label{L:Ht}
For $t\ge 0$ and $x\in\R$, we have that
$\int_\R \ud x \:\calK(t,x) =
|\lambda|\:(\kappa/2)^{1/2}\sinh\left(|\lambda| \: (\kappa/2)^{1/2} t\right)$
and
$\left(1\star \calK\right)(t,x) =
\cosh\left(|\lambda| \: (\kappa/2)^{1/2}
t\right) -1 $.
\end{lemma}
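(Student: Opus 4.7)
\medskip

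\noindent\textbf{Proof plan for Lemma \ref{L:Ht}.}
The plan is to compute the spatial integral of $\calK(t,\cdot)$ directly from its series representation, and then derive the formula for $(1\star\calK)(t,x)$ by a one-dimensional integration in the time variable.

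\smallskip

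\noindent\emph{Step 1: Spatial integral via the power series of $I_0$.} Starting from \eqref{E4:BesselI} and expanding $I_0$ as in \eqref{E4:In}, I would write
\begin{equation*}
\int_\R \calK(t,x)\,\ud x
= \frac{\lambda^2}{4}\sum_{k=0}^\infty \frac{1}{(k!)^2}
\left(\frac{\lambda^2}{8\kappa}\right)^{\!k}
\int_{-\kappa t}^{\kappa t}\bigl((\kappa t)^2-x^2\bigr)^{k}\,\ud x,
\end{equation*}
where termwise integration is justified because the series for $I_0$ converges absolutely and uniformly on the compact interval $[-\kappa t,\kappa t]$. Via the substitution $x=\kappa t\,u$ and the Beta integral
$\int_{-1}^1 (1-u^2)^k\,\ud u = 2\cdot 4^k (k!)^2/(2k+1)!$,
each term becomes explicit and all $(k!)^2$ factors cancel, leaving the clean series
\begin{equation*}
\int_\R \calK(t,x)\,\ud x
= \frac{\lambda^2 \kappa\, t}{2}\sum_{k=0}^\infty
\frac{1}{(2k+1)!}\left(\frac{\lambda^2 \kappa t^2}{2}\right)^{\!k}.
\end{equation*}

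\smallskip

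\noindent\emph{Step 2: Recognize the hyperbolic sine.} Setting $a := |\lambda|\sqrt{\kappa/2}\,t$, so that $a^2 = \lambda^2\kappa t^2/2$, the remaining sum is exactly $\sinh(a)/a$. Multiplying through and simplifying $\lambda^2 \kappa t/(2a) = |\lambda|\sqrt{\kappa/2}$ yields the first claim
\begin{equation*}
\int_\R \calK(t,x)\,\ud x = |\lambda|\sqrt{\kappa/2}\;\sinh\!\bigl(|\lambda|\sqrt{\kappa/2}\,t\bigr).
\end{equation*}

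\smallskip

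\noindent\emph{Step 3: The convolution with $1$.} Since the first argument of the convolution is the constant function $1$, translation invariance in space gives
\begin{equation*}
(1\star\calK)(t,x) = \int_0^t \ud s\int_\R \calK(s,y)\,\ud y,
\end{equation*}
which in particular is independent of $x$. Substituting the formula from Step 2 and integrating the hyperbolic sine from $0$ to $t$ delivers $\cosh(|\lambda|\sqrt{\kappa/2}\,t)-1$, as claimed.

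\smallskip

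\noindent\emph{Main obstacle.} The only nontrivial point is the combinatorial identification in Step 1 — verifying that the $(k!)^2$ from $I_0$ cancels exactly with the numerator of the Beta integral to produce the Taylor coefficients $1/(2k+1)!$ of $\sinh$. Everything else is a routine change of variable and an elementary one-dimensional integration, so no deeper tool is required.
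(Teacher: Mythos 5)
Your argument is correct, and I verified the key computation: with $z^2/4=\lambda^2((\kappa t)^2-x^2)/(8\kappa)$ in the series \eqref{E4:In}, the substitution $x=\kappa t\,u$ together with $\int_{-1}^1(1-u^2)^k\,\ud u=2\cdot 4^k(k!)^2/(2k+1)!$ does cancel the $(k!)^2$ exactly and produces $\frac{\lambda^2\kappa t}{2}\sum_k\frac{1}{(2k+1)!}\bigl(\frac{\lambda^2\kappa t^2}{2}\bigr)^k=\frac{\lambda^2\kappa t}{2}\cdot\frac{\sinh a}{a}$ with $a=|\lambda|\sqrt{\kappa/2}\,t$, which simplifies to the stated formula; the second identity then follows by integrating in time, as you say. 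Your route is genuinely different from the paper's: the paper instead makes the change of variable $y=\sqrt{\lambda^2((\kappa t)^2-x^2)/(2\kappa)}$, rewriting the integral as a constant times $\int_0^{a}\frac{y}{\sqrt{a^2-y^2}}\,I_0(y)\,\ud y$, and then quotes a closed-form evaluation from the Erd\'elyi et al.\ tables of integral transforms, disposing of the whole lemma in two lines. The paper's approach is shorter but outsources the actual computation to a table reference; yours is fully self-contained and elementary, at the cost of the (routine but nontrivial) Beta-integral bookkeeping, and it also makes transparent why the answer is $\sinh$ — the series $\sum_n\calL_n$ in Proposition \ref{P4:K-W} already has the $1/(2k+1)!$ structure lurking in it. Both proofs are complete and correct.
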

\begin{proof}
By a change of variable,
\[
\int_\R \ud x\: \calK(t,x) = 2 \int_0^{|\lambda|\sqrt{\kappa/2}\: t}\ud y\:
\frac{\lambda^2}{4}
\frac{\sqrt{2\kappa}}{|\lambda|} \frac{y}{\sqrt{\kappa t^2\lambda^2/2-y^2}}
I_0(y). 
\]
Then the first statement follows from \cite[(6) on
p. 365]{Erdelyi1954-II} with $\nu=0$, $\sigma=1/2$ and
$a=|\lambda|\: (\kappa/2)^{1/2}t$.  The second statement  is a simple
application
of the first.
\end{proof}

\begin{lemma}
\label{L:coshsinh}
Suppose that $a\ne c$, $t>0$ and $b\in [0,1]$. Then
\begin{multline*}
\int_{b t}^t \ud s\:\cosh\left(a(t-s)\right)\sinh\left(c s\right)
\\
=
\left(a^2-c^2\right)^{-1}\Big(
c \cosh(b c t)\cosh\left(a(1-b)t\right)
-c \cosh( c t) +a\sinh(bct)\sinh\left(a(1-b)t\right)
\Big).
\end{multline*}
\end{lemma}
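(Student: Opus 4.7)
The plan is to reduce the integral to elementary antiderivatives via product-to-sum identities and then reassemble the result using hyperbolic addition formulas. The condition $a \ne c$ is exactly what ensures the antiderivatives exist in the form given (no $0/0$).

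First I would apply the identity $\cosh(A)\sinh(B) = \tfrac{1}{2}\bigl(\sinh(A+B)-\sinh(A-B)\bigr)$ with $A=a(t-s)$ and $B=cs$, giving
\[
\cosh(a(t-s))\sinh(cs) = \tfrac{1}{2}\bigl(\sinh(at+(c-a)s) - \sinh(at-(a+c)s)\bigr).
\]
Because $a \ne c$ (and therefore $a+c$ and $c-a$ cannot both vanish; one handles the degenerate case $a=-c$ separately by the same method with a single constant antiderivative), an antiderivative with respect to $s$ is
\[
F(s) := \tfrac{1}{2}\left[\tfrac{1}{c-a}\cosh(at+(c-a)s) + \tfrac{1}{a+c}\cosh(at-(a+c)s)\right].
\]

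Next I would evaluate $F$ at the two endpoints. At $s=t$ both arguments collapse to $\pm ct$, so using evenness of $\cosh$,
\[
F(t) = \tfrac{1}{2}\cosh(ct)\left(\tfrac{1}{c-a}+\tfrac{1}{a+c}\right) = \frac{c\cosh(ct)}{c^2-a^2}.
\]
At $s = bt$, the arguments become $a(1-b)t \pm bct$, and the addition formulas $\cosh(x \pm y) = \cosh x\cosh y \pm \sinh x\sinh y$ yield
\[
F(bt) = \tfrac{1}{2}\!\left[\tfrac{C+S}{c-a}+\tfrac{C-S}{a+c}\right] = \frac{cC + aS}{c^2-a^2},
\]
where $C := \cosh(a(1-b)t)\cosh(bct)$ and $S := \sinh(a(1-b)t)\sinh(bct)$, using the elementary identities $\tfrac{1}{c-a}+\tfrac{1}{c+a}=\tfrac{2c}{c^2-a^2}$ and $\tfrac{1}{c-a}-\tfrac{1}{c+a}=\tfrac{2a}{c^2-a^2}$.

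Finally I would subtract and flip the sign of the denominator:
\[
\int_{bt}^{t}\cosh(a(t-s))\sinh(cs)\,ds = F(t) - F(bt) = \frac{cC + aS - c\cosh(ct)}{-(a^2-c^2)\cdot(-1)} = \frac{cC - c\cosh(ct) + aS}{a^2-c^2},
\]
which is precisely the stated formula. There is no real obstacle here: the only care needed is tracking signs when absorbing $c^2-a^2 \to -(a^2-c^2)$, and noting that the hypothesis $a \ne c$ (together with an aside for $a = -c$, handled by direct integration of $\tfrac{1}{2}\sinh(2at)$ and yielding the same limit by continuity) suffices to legitimize dividing by $a^2-c^2$.
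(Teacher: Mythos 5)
Your proof is correct and takes essentially the same route as the paper's, whose entire argument is the one-line instruction to use the product-to-sum identity $\cosh(x)\sinh(y)=\frac{1}{2}\left(\sinh(x+y)+\sinh(-x+y)\right)$, leaving the antidifferentiation and the hyperbolic addition formulas to the reader exactly as you carry them out. (The only slightly off note is your parenthetical on $a=-c$: there the stated right-hand side is a genuine $0/0$ with nonvanishing numerator rather than a removable limit, so the identity really requires $a^2\ne c^2$ --- which is what the hypothesis is implicitly meant to ensure and what holds in the paper's application.)
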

\begin{proof}
Use the formula
$\cosh(x)\sinh(y)=\frac{1}{2}\left(\sinh(x+y)+\sinh(-x+y)\right)$.
\end{proof}

For the following two lemmas, let $G_\nu(t,x)$, $\nu>0$, be the
heat kernel function (see \eqref{E:HeatG}).
\begin{lemma}\label{L2:GG}
For all $t$, $s>0$ and $x$, $y\in\R$, we have that $ G_\nu^2(t,x) =
\frac{1}{\sqrt{4\pi\nu t}} G_{\nu/2}(t,x)$ and $G_\nu(t,x)G_\nu\left(s,y\right)
= G_\nu \left(\frac{ts}{t+s},\frac{s x+t
y}{t+s}\right)
G_\nu\left(t+s,x-y\right)$.
\end{lemma}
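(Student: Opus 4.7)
The proof is a direct algebraic verification in both parts; there is nothing deep here, so my plan is simply to expand everything and collect terms.

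For the first identity, I would start from the definition $G_\nu(t,x)=\tfrac{1}{\sqrt{2\pi\nu t}}\exp(-x^2/(2\nu t))$ (the normalization implicit from the parabolic p.d.e.\ in \eqref{E1:ThetaHeat}). Squaring gives
\[
G_\nu^2(t,x)=\frac{1}{2\pi\nu t}\exp\!\left(-\frac{x^2}{\nu t}\right),
\]
while the right-hand side equals $\tfrac{1}{\sqrt{4\pi\nu t}}\cdot\tfrac{1}{\sqrt{\pi\nu t}}\exp(-x^2/(\nu t))$. Since $\sqrt{4\pi\nu t}\sqrt{\pi\nu t}=2\pi\nu t$, both sides match. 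This is one line of verification.

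For the second identity, the strategy is standard completion of the square. I would multiply
\[
G_\nu(t,x)G_\nu(s,y)=\frac{1}{2\pi\nu\sqrt{ts}}\exp\!\left(-\frac{x^2}{2\nu t}-\frac{y^2}{2\nu s}\right),
\]
and show that the exponent equals
\[
-\frac{(x-y)^2}{2\nu(t+s)}-\frac{(sx+ty)^2}{2\nu ts(t+s)}.
\]
Putting both right-hand terms over the common denominator $2\nu ts(t+s)$, I need the algebraic identity
\[
ts\,(x-y)^2+(sx+ty)^2=(t+s)(sx^2+ty^2),
\]
which expands to $tsx^2-2tsxy+tsy^2+s^2x^2+2stxy+t^2y^2=(ts+s^2)x^2+(ts+t^2)y^2$; this is clearly correct. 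Once the exponents agree, the prefactor check is immediate:
\[
\frac{1}{\sqrt{2\pi\nu\cdot ts/(t+s)}}\cdot\frac{1}{\sqrt{2\pi\nu(t+s)}}=\frac{1}{\sqrt{4\pi^2\nu^2 ts}}=\frac{1}{2\pi\nu\sqrt{ts}},
\]
which matches the prefactor of the left-hand side.

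There is no substantial obstacle. The only step that one might call ``the trick'' is recognizing, or guessing, the algebraic identity in the exponent, but it follows mechanically from writing the expected product of Gaussians and comparing. A shorter, more conceptual alternative would be a probabilistic derivation using the convolution semigroup: if $B_t$ is a Brownian motion with variance $\nu t$, then conditional on $B_{t+s}=x-y$ the law of $B_t$ is Gaussian with mean $t(x-y)/(t+s)$ and variance $\nu ts/(t+s)$, which gives the factorization directly; but since the paper needs only the identity itself, the direct computation above is simplest.
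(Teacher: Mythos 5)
Your proof is correct, and it is exactly the routine verification that the paper omits: Lemma \ref{L2:GG} is stated in the appendix with no proof at all, so a direct expand-and-compare computation (or the Brownian-bridge factorization you mention) is precisely what is intended. You were also right to work with the normalization $G_\nu(t,x)=(2\pi\nu t)^{-1/2}e^{-x^2/(2\nu t)}$, i.e.\ the kernel of $\partial_t-\tfrac{\nu}{2}\partial_x^2$ used in \cite{ChenDalang13Heat}, rather than the formula printed in \eqref{E:HeatG}: the identity $G_\nu^2(t,x)=(4\pi\nu t)^{-1/2}G_{\nu/2}(t,x)$ forces this choice (with the printed prefactor $(4\pi\nu t)^{-1/2}$ and exponent $-x^2/(4\nu t)$ the two sides differ by a factor $\sqrt{2}$), so \eqref{E:HeatG} contains a typo and the lemma must be read with the normalization you adopted.
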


\begin{lemma}[Lemma 4.4 of
\cite{ChenDalang13Heat}]\label{LH:Split}
 For all $x$, $z_1$ $z_2\in\R$ and $t,s>0$, denote
$\bar{z} = \frac{z_1+z_2}{2}$, $\Delta z = z_1-z_2$. Then
$G_1\left(t,x-\bar{z}\right)
G_1\left(s,\Delta z\right)
\le \frac{(4t) \vee s}{\sqrt{t s}}
G_1\!\left((4t)\vee s,x-z_1\right)
G_1\!\left((4t)\vee s,x-z_2\right)$, where $a\vee
b:=\max(a,b)$.
\end{lemma}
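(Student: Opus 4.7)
\medskip
\noindent\textbf{Proof proposal for Lemma \ref{LH:Split}.} The plan is to reduce the stated inequality to a pointwise comparison of Gaussian exponents, using two elementary ingredients: a parallelogram-type identity relating $(x-z_1)^2+(x-z_2)^2$ to $(x-\bar z)^2$ and $(\Delta z)^2$, and the trivial bounds $(4t)\vee s\ge 4t$ and $(4t)\vee s\ge s$. Once these are in place the inequality becomes a one-line check.

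First I would write out both sides explicitly using $G_1(r,y)=(4\pi r)^{-1/2}\exp(-y^2/(4r))$. The left-hand side equals
\[
\frac{1}{4\pi\sqrt{ts}}\exp\!\left(-\frac{(x-\bar z)^2}{4t}-\frac{(\Delta z)^2}{4s}\right),
\]
while the right-hand side, writing $M:=(4t)\vee s$ for brevity, equals
\[
\frac{M}{\sqrt{ts}}\cdot\frac{1}{4\pi M}\exp\!\left(-\frac{(x-z_1)^2+(x-z_2)^2}{4M}\right)
=\frac{1}{4\pi\sqrt{ts}}\exp\!\left(-\frac{(x-z_1)^2+(x-z_2)^2}{4M}\right).
\]
Thus the prefactors coincide and the inequality reduces to comparing exponents. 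Note that this fortunate cancellation of prefactors is exactly what makes the choice of multiplicative constant $M/\sqrt{ts}$ natural.

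Next I would use $z_{1,2}=\bar z\pm\Delta z/2$ to get the identity
\[
(x-z_1)^2+(x-z_2)^2=2(x-\bar z)^2+\tfrac{1}{2}(\Delta z)^2,
\]
so that the required inequality becomes
\[
\frac{(x-\bar z)^2}{4t}+\frac{(\Delta z)^2}{4s}\;\ge\;\frac{(x-\bar z)^2}{2M}+\frac{(\Delta z)^2}{8M}.
\]
This would be verified coefficient by coefficient: since $M\ge 4t$ we have $\frac{1}{2M}\le\frac{1}{8t}\le\frac{1}{4t}$, and since $M\ge s$ we have $\frac{1}{8M}\le\frac{1}{8s}\le\frac{1}{4s}$. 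Both terms on the right are therefore dominated by the corresponding terms on the left, which finishes the proof.

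There is no real obstacle here: the only mild subtlety is spotting that $M$ must simultaneously dominate the two time scales appearing in the two Gaussian factors on the left, which forces one to take the maximum of $4t$ (not $t$, because of the factor $2$ coming from the parallelogram identity on the $(x-\bar z)^2$ term) and $s$ (because of the $(\Delta z)^2$ term, where no factor of $2$ is lost). This matching is what makes the constant $(4t)\vee s$ sharp within this method; if one tried $t\vee s$ the bound on the $(x-\bar z)^2$-coefficient would fail.
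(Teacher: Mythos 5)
Your proof is correct and complete: the prefactors on both sides do cancel, the parallelogram identity $(x-z_1)^2+(x-z_2)^2=2(x-\bar z)^2+\tfrac12(\Delta z)^2$ is right, and the coefficient-by-coefficient comparison using $(4t)\vee s\ge 4t$ and $(4t)\vee s\ge s$ closes the argument. Note that the paper itself offers no proof to compare against here --- it imports the statement verbatim as Lemma 4.4 of \cite{ChenDalang13Heat} --- so your direct verification stands on its own. One small inaccuracy in your closing aside: the constant $(4t)\vee s$ is not actually sharp for this method, since the two coefficient inequalities only require $M\ge 2t$ and $M\ge s/2$, so $(2t)\vee(s/2)$ would already do; this does not affect the validity of the proof of the lemma as stated, because the right-hand side is increasing in $M$.
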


\def\polhk#1{\setbox0=\hbox{#1}{\ooalign{\hidewidth
  \lower1.5ex\hbox{`}\hidewidth\crcr\unhbox0}}} \def\cprime{$'$}


\end{document}